\newtheorem{thm}{Theorem}[section]
\newtheorem{lemma}[thm]{Lemma}
\newtheorem{prop}[thm]{Proposition}
\theoremstyle{definition}
\newtheorem{defn}[thm]{Definition}
\theoremstyle{remark}
\let\c@equation\c@thm
\numberwithin{equation}{section}
\title[Modified Gursky-Streets equation]{Existence of solution to modified Gursky-Streets equation}
\thanks{The second author is supported by
National Natural Science Foundation of China (No. 12001138).}
\author{Yi Huang}
\author{Zhenan Sui}
\author{Mingyu Xie}
\address{School of Mathematics, Harbin Institute of Technology, Harbin, China}
\email{24s012003@stu.hit.edu.cn}
\address{Institute for Advanced Study in Mathematics of HIT, Harbin Institute of Technology, Harbin, China}
\email{sui.4@osu.edu}
\address{School of Mathematics, Harbin Institute of Technology, Harbin, China}
\email{24s012009@stu.hit.edu.cn}
\begin{document}

\begin{abstract}
We solve the modified Gursky-Streets equation, which is a fully nonlinear equation arising in conformal geometry with uniform $C^{1, 1}$ estimates when (i) $\gamma > 0$ and  $1 \leq k \leq n$ or (ii) $r > 0$ and $2 s k \leq r n$. We also prove the existence of a Lipschitz continuous viscosity solution when $r \neq 0$.
\end{abstract}

\subjclass[2010]{Primary 53C21; Secondary 35J60}

\maketitle

%\tableofcontents

\section {\large Introduction}

\vspace{4mm}

On a smooth compact Riemannian manifold $(M^n, g)$ of dimension $n \geq 3$, we are interested in solving the following class of conformal curvature equations
\begin{equation} \label{eq1}
u_{tt} \sigma_k \big( W[u] \big) -  \sigma_{k}^{ij} \big( W[u] \big)  u_{ti} u_{tj} = \psi(x, t) \quad \text{on} \quad M \times [0, 1]
\end{equation}
subject to the boundary condition
\begin{equation} \label{eq1-4}
u(\cdot, 0) = u_0, \quad u(\cdot, 1) = u_1,
\end{equation}
where
\begin{equation} \label{eq1-7}
W[u] = g^{- 1} \bigg( \nabla^2 u + s d u \otimes d u + \Big( \gamma \Delta u - \frac{r}{2} |\nabla u|^2 \Big) g + A \bigg),
\end{equation}
$\gamma, s, r \in \mathbb{R}$, $\gamma \geq 0$, $A$ is a smooth symmetric tensor on $M$ with $\lambda ( g^{- 1} A ) \in \Gamma_k$, and $\psi \geq 0$ is a given smooth function defined on $M \times [0, 1]$, $u_0$, $u_1$ are given smooth functions on $M$ with $\lambda\big( W[u_0] \big) \in \Gamma_k$ and $\lambda\big( W[u_1] \big) \in \Gamma_k$. Also,  $\nabla{u}$, $\nabla^{2}u$ and $\Delta u$ are the gradient, Hessian and Laplace-Beltrami operator of $u$ with respect to the background metric $g$ respectively.
In order to make the notation and computation easier, we always choose a smooth local orthonormal frame field $e_1, \ldots, e_n$ on $M$ with respect to the metric $g$. Thus, $u_{ti} := \nabla_{e_i} u_t$, $u_{ij} := \nabla_{e_j e_i} u$, and higher order covariant derivatives can be similarly written in this manner.
Besides,
\[  \sigma_k \big( W[u] \big) := \sigma_k \Big( \lambda \big( W[u] \big) \Big), \quad \sigma_{k}^{ij} \big( W[u] \big) : = \frac{\partial \sigma_k \big( W[u] \big)}{\partial W_{ij}[u]},  \]
where $\lambda \big( W[u] \big) = ( \lambda_1, \ldots,
\lambda_n )$ are the eigenvalues of the matrix $W[u]$, and
\[ \sigma_k (\lambda) =  \sum\limits_{ 1 \leq i_1 <
\cdots < i_k \leq n} \lambda_{i_1} \cdots \lambda_{i_k} \]
is the $k$th elementary symmetric function defined on
Garding's cone
\[\Gamma_k = \{ \lambda  \in \mathbb{R}^n : \sigma_j ( \lambda ) > 0, \, j = 1,
\ldots, k \}. \]

If we set
\[ E_u = u_{tt} W[u] - g^{- 1} d u_t \otimes d u_t, \]
we arrive at the following relation
\begin{equation} \label{eq4}
u_{tt} \sigma_k \big( W[u] \big) -  \sigma_{k}^{ij} \big( W[u] \big)  u_{ti} u_{tj} = u_{tt}^{1 - k} \sigma_k ( E_u )
\end{equation}
by Proposition \ref{prop4}. Thus equation \eqref{eq1} is equivalent to
\begin{equation} \label{eq2}
u_{tt}^{1 - k} \sigma_k (E_u) = \psi(x, t).
\end{equation}

When $\psi > 0$ throughout $M \times [0, 1]$, following \cite{HeXuZhang} we call a $C^2$ function $u$ on $M \times [0, 1]$ admissible if
\[ \lambda \big( W [u] \big) \in \Gamma_k, \quad  u_{tt} > 0, \quad \sigma_k (E_u) > 0.  \]
Consequently, equation \eqref{eq1} or \eqref{eq2} is elliptic for $C^2$ solution $u$ with $\lambda \big( W [u] \big) (x, t) \in \Gamma_k$ for any $(x, t) \in M \times [0, 1]$ (see Proposition \ref{Prop7}).

When equation \eqref{eq1} becomes degenerate, we call a $C^{1, 1}$ function $u$ on $M \times [0, 1]$ admissible if
\[ \lambda \big( W [u] \big) \in \overline{\Gamma}_k, \quad  u_{tt} \geq 0, \quad \sigma_k (E_u) \geq 0  \]
almost everywhere.

If $\psi \equiv 0$, $s = 1$, $r = 1$, $\gamma = 0$, and $A$ is the Schouten tensor of $g$, then we obtain the following degenerate conformal curvature equation
\begin{equation} \label{eq1-2}
u_{tt} \sigma_k \big( A_u \big) -  \sigma_{k}^{ij} \big( A_u \big)  u_{ti} u_{tj} =  0
\end{equation}
with
\begin{equation} \label{eq1-3}
A_u =  \nabla^2 u + d u \otimes d u  - \frac{1}{2} |\nabla u|^2 g + A.
\end{equation}
Equation \eqref{eq1-2} was introduced by Gursky and Streets \cite{Gursky-Streets2018}. In this important work, the uniqueness of the $\sigma_2$-Yamabe equation in dimension $n = 4$ is proved. A key ingredient of their proof involves a study of equation \eqref{eq1-2}. This equation is now known as the Gursky-Streets equation, which turns out to be the geodesic equation under a Riemannian metric defined on a conformal class of metrics. Subsequently, the Gursky-Streets equation was studied by He \cite{He2021} for $k = 2$ in dimension $n \geq 4$, where he established uniform $C^{1, 1}$ regularity for $k = 2$ and reproved the uniqueness of the $\sigma_2$ equation in dimension $n = 4$. Then, the case when $k \leq \frac{n}{2}$ was investigated by He, Xu and Zhang \cite{HeXuZhang} by a crucial proof of the concavity of the operator associated to the equation for all $1 \leq k \leq n$ (see Proposition \ref{prop5}), which relies on Garding theory of hyperbolic polynomials and results in the theory of real roots for interlacing polynomials.
It is worth mentioning that the geometry of Gursky-Streets' metric on the space of conformal metrics has a parallel theory with the geometry of the space of K\"ahler metrics, where the geodesic equation can be written as a homogeneous complex Monge-Amp\`ere equation (see \cite{Calabi1982, Mabuchi1987, Mabuchi1986, Semmes1992, Donaldson1999, ChenXX2000, Donaldson2004}).

Motivated by the above work, we are especially interested in the existence and regularity of solutions to the generalized Gursky-Streets equation \eqref{eq1}.
Under conformal deformation of metric $g_u = e^{2 u} g$, the Ricci tensor $\text{Ric}_u$ of $g_u$ and $\text{Ric}$ of $g$ are related by the formula
\begin{equation} \label{eq1-5}
- \frac{\text{Ric}_u}{n - 2} = \nabla^2 u - d u \otimes d u + \Big( \frac{\Delta u}{n - 2} + |\nabla u|^2 \Big) g - \frac{\text{Ric}}{n - 2}.
\end{equation}
Moreover, the Schouten tensor
\[ A_u = \frac{1}{n - 2} \Big( Ric_u - \frac{S_u}{2 (n - 1)}  g_u \Big) \]
of $g_u$ and $A$ of $g$ are related by the formula
\begin{equation} \label{eq1-6}
- A_u = \nabla^2 u - d u \otimes d u + \frac{1}{2} |\nabla u|^2 g - A ,
\end{equation}
where $S_u$ is the scalar curvature of $g_u$. If we change $u$ into $- u$ in \eqref{eq1-6}, we will obtain exactly \eqref{eq1-3}.
In addition, in Gursky and Viaclovski \cite{GV03} and Li and Li \cite{LL2005}, the following modified Schouten tensor with a parameter $\tau$
\[  A_u^{\tau} = \frac{1}{n - 2} \Big( Ric_u - \frac{\tau S_u}{2 (n - 1)}  g_u \Big)  \]
was introduced, which satisfies
\begin{equation} \label{eq1-10}
- A_u^{\tau} = \nabla^2 u - d u \otimes d u + \bigg( \frac{1 - \tau}{n - 2} \Delta u + \Big( 1 - \frac{\tau}{2} \Big) |\nabla u|^2 \bigg) g - A^{\tau}.
\end{equation}

All \eqref{eq1-3}, \eqref{eq1-6}, \eqref{eq1-5} and \eqref{eq1-10} with $\tau \leq 1$ fall into the form of \eqref{eq1-7}, which are known as conformal Schouten tensor, conformal negative Schouten tensor, conformal negative Ricci tensor and conformal negative modified Schouten tensor respectively.

A central problem in conformal geometry is on the solvability of fully nonlinear Yamabe type problem which can be written in the general form
\begin{equation} \label{eq1-8}
\sigma_k \big( W [u] \big) = e^{- 2 k u} \psi(x),
\end{equation}
or
\begin{equation} \label{eq1-9}
\sigma_k \big( W [u] \big) = e^{2 k u} \psi(x),
\end{equation}
where $\psi(x)$ is a given smooth positive function defined on smooth compact manifold with or without boundary. There is a vast amount of literature on the existence and regularity of solutions to \eqref{eq1-8} or \eqref{eq1-9} ever since the work of Viaclovski \cite{Viaclovski2000}. In particular, we notice that the tensor \eqref{eq1-3}, \eqref{eq1-6} and \eqref{eq1-5} may bring very different existence and regularity issues to equations in the form \eqref{eq1-8} or \eqref{eq1-9}. The estimates in \cite{Viaclovsky2002} and \cite{GV03} reflect such difference (see also \cite{CGH22} for a more general equation). Moreover, when solving fully nonlinear Loewner-Nirenberg type problems, we observe that the negative Schouten tensor case has Lipschitz continuous solution in general, as proved by Gonz\'alez, Li and Nguyen \cite{Gonzalez-Li-Nguyen}, and there are counterexamples to $C^1$ regularity given in Li and Nguyen \cite{LN21} and in Li, Nguyen and Xiong \cite{LNX23}. In contrast, when $\gamma > 0$ (for example, \eqref{eq1-5}), Guan \cite{Guan08, Guan09} obtained smooth solution to \eqref{eq1-9} on smooth compact manifold with boundary, based on which, smooth solution to fully nonlinear Loewner-Nirenberg problem of negative Ricci tensor was obtained in \cite{Sui2024}.  In view of these work, we are interested in whether the $\Delta u$ term can still bring in a priori estimates up to second order for equation \eqref{eq1}--\eqref{eq1-4} when $\psi > 0$ throughout $M \times [0, 1]$. The following result gives an affirmative answer.

\begin{thm} \label{Theorem1}
Let $(M, g)$ be a compact Riemannian manifold with $\lambda(A) \in \Gamma_k$ and suppose that (i) $\gamma > 0$ and  $1 \leq k \leq n$ or (ii) $r > 0$ and $2 s k \leq r n$. Given a smooth positive function $\psi(x, t)$ defined on $M \times [0, 1]$ and smooth function $u_0$, $u_1$ defined on $M$ with $\lambda\big( W[u_0] \big) \in \Gamma_k$ and $\lambda\big( W[u_1] \big) \in \Gamma_k$, there exists a unique smooth solution $u$ of \eqref{eq1}--\eqref{eq1-4} on $M \times [0, 1]$ with $\lambda\big( W[ u ] \big) (x, t) \in \Gamma_k$ for any $(x, t) \in M \times [0, 1]$. Moreover, we have the following estimate which is independent of $\inf\psi$,
\[ \max\limits_{M \times [0, 1]} \Big( |u| + |u_t| + |\nabla u| + u_{tt} + |\nabla^2 u| + |\nabla u_t| \Big) \leq C. \]
When $r > 0$ and $2 s k \leq r n$, the $C^2$ estimate is uniform as $\gamma \rightarrow 0^+$.
\end{thm}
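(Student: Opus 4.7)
The strategy is the continuity method: connect \eqref{eq1} to a trivially solvable equation through a one-parameter family $\psi_\tau$, prove openness by the implicit function theorem applied to the linearized operator in H\"older spaces (which is elliptic on admissible $u$ by Proposition \ref{Prop7}), and prove closedness through uniform a priori $C^{1,1}$ estimates. Once the $C^{1,1}$ bound is available, the concavity from Proposition \ref{prop5} makes Evans-Krylov applicable, and Schauder iteration then delivers smoothness. Uniqueness follows from concavity combined with the maximum principle: if $u, v$ are two admissible solutions with $u = v$ at $t = 0, 1$, then $0 = F(u) - F(v) \le DF(v)(u - v)$, and the resulting linear degenerate elliptic inequality, together with the boundary agreement, forces $u \equiv v$.

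For the zero and first order estimates I would construct admissible barriers of the form $\bar{u}(x,t) = (1-t)u_0(x) + tu_1(x) + h(t)$ with $h$ a concave profile chosen so that $\bar{u}$ is admissible and $F(\bar{u}) \le \psi$, and analogously for a subsolution, so that comparison yields $|u|, |u_t| \le C$. For the spatial gradient I would apply the maximum principle to $\phi = |\nabla u|^2 e^{\eta u}$: differentiating \eqref{eq1} once in a spatial direction $e_i$ and pairing with $\sigma_k^{ij}(W[u])$ produces the usual third-order terms, and the $\gamma \Delta u \cdot g$ piece of $W[u]$ contributes an extra positive term proportional to $\gamma \mathcal{T}(W[u])\,\Delta \phi$ with $\mathcal{T} = \sum_i \sigma_k^{ii}(W[u]) > 0$ in $\Gamma_k$. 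This term absorbs the bad cross terms that make the $\gamma = 0$ case delicate and, just as importantly, does not involve $\psi$, so the gradient bound is independent of $\inf \psi$.

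The main obstacle is the $C^2$ estimate. For the spatial Hessian I would apply the maximum principle to a test function of the form
\[ \Phi = \log \lambda_{\max}(W[u]) + \eta |\nabla u|^2 + M u, \]
perturbed in the standard way to handle multiple eigenvalues, differentiate \eqref{eq1} twice in space, and combine the concavity estimate of Proposition \ref{prop5} with the $\gamma \mathcal{T}$ contribution to dominate all third-order terms. The mixed bound $|\nabla u_t|$ is obtained by differentiating \eqref{eq1} once in $t$ and once in space and repeating the maximum principle strategy, again exploiting the extra trace term from $\gamma > 0$. The time-time bound $u_{tt} \le C$ then follows by solving \eqref{eq1} for $u_{tt}$,
\[ u_{tt} = \frac{\psi + \sigma_k^{ij}(W[u]) u_{ti} u_{tj}}{\sigma_k(W[u])}, \]
once the spatial and mixed Hessians are controlled and a lower bound for $\sigma_k(W[u])$ has been extracted from the admissibility and the equivalent form \eqref{eq2}. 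Boundary $C^2$ values at $t = 0, 1$ are standard: pure spatial second derivatives are given by $u_0, u_1$; mixed $u_{ti}$ follow from tangential differentiation; and $u_{tt}|_{t=0,1}$ is read off \eqref{eq1}. The independence of every estimate from $\inf \psi$, which is the central technical point distinguishing this theorem from the classical Gursky-Streets setting, rests on the systematic use of the positive $\gamma \mathcal{T}$ term in each maximum principle argument.
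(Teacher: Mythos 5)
Your overall scaffolding (continuity method, Evans--Krylov via the concavity in Proposition \ref{prop5}, maximum principle uniqueness) matches the paper, and your observation that the $\gamma$--trace term is the engine driving each estimate is exactly the right intuition. But there are two concrete gaps, one of which is fatal for the claimed independence from $\inf\psi$.

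The fatal step is your proposal to recover the bound on $u_{tt}$ algebraically from the equation,
\[
u_{tt} \;=\; \frac{\psi + \sigma_k^{ij}\big(W[u]\big)\,u_{ti}u_{tj}}{\sigma_k\big(W[u]\big)},
\]
after the spatial Hessian and $|\nabla u_t|$ have been bounded. This requires a positive lower bound for $\sigma_k\big(W[u]\big)$. Admissibility only gives $\sigma_k\big(W[u]\big)>0$ pointwise, and the identity derived from Proposition \ref{prop4} gives $\sigma_k\big(W[u]\big)\ge \psi/u_{tt}$, which is circular and, more to the point, collapses as $\psi\to 0$. Since the whole purpose of Theorem \ref{Theorem1} is to produce estimates uniform in $\inf\psi$ (so that Theorem \ref{Theorem2} can be deduced by degeneration), you cannot afford any step that assumes a lower bound on $\sigma_k\big(W[u]\big)$. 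The paper avoids this by bounding $u_{tt}$ \emph{first}, directly by a maximum principle applied to $u_{tt}+\eta(u_t^2)$ with $\eta(v)=\tfrac{\lambda}{2}v^2$: differentiating the equation twice in $t$, using the concavity of $G$ to discard the $G^{IJ,KL}r_{IJt}r_{KLt}$ term, and then using the $\gamma|\nabla u_t|^2\sigma_{k-1}(E_u)/\sigma_k(E_u)$ good term produced by $\mathbb{L}(u_t^2)$ to absorb the bad $|\nabla u_t|^2$ contributions from $s$ and $r$. Note the order is the opposite of yours: the paper then uses this $u_{tt}$ bound to control the coefficient $\sigma_k\big(W[u]\big)\big/\big(u_{tt}^{1-k}\sigma_k(E_u)\big)\ge u_{tt}^{-1}$ appearing in the $\Delta u$ estimate. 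Your sequence (spatial Hessian $\Rightarrow$ mixed $\Rightarrow$ $u_{tt}$) cannot close.

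The second gap, though secondary, is real: you take the starting point of the continuity path for granted. As the paper explicitly notes, the He--Xu--Zhang barrier $(1-t)u_0 + tu_1 + at(t-1)$ is admissible only because of the specific gradient pattern $du\otimes du - \tfrac12|\nabla u|^2 g$; for the general tensor $W[u]$ it need not lie in $\Gamma_k$, and your proposed barriers $(1-t)u_0 + tu_1 + h(t)$ for the $C^0$ and $u_t$ bounds suffer the same admissibility problem. This is precisely why the paper inserts Theorem \ref{Thm4}, solving the $t$-parametrized elliptic equation $e^{-2ku}\sigma_k\big(W[u]\big)=\psi(x,t)$ so as to manufacture an admissible function $w=v+at(t-1)$ to launch the continuity process. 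You would need to supply some analogue of this, or replace your barrier comparison with the paper's direct maximum-principle arguments on $u+at(1-t)$ and $u-u_0-bt^2+ct$, which do not require any admissible comparison function.
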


Theorem \ref{Theorem1} was proved by He, Xu and Zhang \cite{HeXuZhang} for $W[u] = A_u$ assuming $k \leq \frac{n}{2}$. The condition $k \leq \frac{n}{2}$ plays a crucial role in their derivation of $C^2$ estimates due to the pattern $d u \otimes d u  - \frac{1}{2} |\nabla u|^2 g$ in \eqref{eq1-3}.  We extend their result to the case when $r > 0$ and $2 s k \leq r n$. In contrast, if we assume $\gamma > 0$, the $\Delta u$ term can bring in good terms for $C^2$ estimates of \eqref{eq1}--\eqref{eq1-4}, regardless of the pattern of gradient terms. Our test functions may be different for different situations and accordingly the estimates are different.

In order to apply continuity method to prove existence of admissible solution to \eqref{eq1}--\eqref{eq1-4}, we need to find an admissible function which can serve as an initial solution of the continuity process. Different from \cite{HeXuZhang}, where
\[ \ln \Big( (1 - t) e^{u_0} + t e^{u_1} \Big) + a t (t - 1) \]
with sufficiently large $a$ is admissible due to the pattern $d u \otimes d u  - \frac{1}{2} |\nabla u|^2 g$, we establish the following existence and regularity result for $\gamma > 0$ which seems to be useful in itself.

\begin{thm} \label{Thm4}
Let $(M, g)$ be a compact Riemannian manifold with $\lambda(A) \in \Gamma_k$ and suppose that $\gamma > 0$. Given a smooth positive function $\psi(x, t)$ defined on $M \times [0, 1]$ and smooth function $u_0$, $u_1$ defined on $M$ with $\lambda\big( W[u_0] \big) \in \Gamma_k$ and $\lambda\big( W[u_1] \big) \in \Gamma_k$, which also satisfy the compatibility condition:
\begin{equation} \label{eq5-2}
e^{- 2 k u_0} \sigma_k \big( W[u_0] \big) = \psi(x, 0) \quad \text{and} \quad e^{- 2 k u_1} \sigma_k \big( W[u_1] \big) = \psi(x, 1),
\end{equation}
there exists a unique smooth solution $u(x, t)$ to
\begin{equation} \label{eq5-1}
\left\{ \begin{aligned}
& e^{- 2 k u} \sigma_k \big( W[u] \big) = \psi(x, t) \quad \text{on} \quad M \times [0, 1], \\
& u(\cdot, 0) = u_0, \quad  u(\cdot, 1) = u_1
\end{aligned} \right.
\end{equation}
with $\lambda\big( W[ u ] \big) (x, t) \in \Gamma_k$ for any $( x, t ) \in M \times [0, 1]$. Moreover, we have the estimate
\[ \max\limits_{M \times [0, 1]} \Big( |u| + |u_t| + |\nabla u| + |\nabla^2 u| \Big) \leq C. \]
\end{thm}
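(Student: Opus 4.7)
The key observation is that no $t$-derivatives of $u$ appear in the differential operator of \eqref{eq5-1}; the variable $t$ enters only as a parameter through $\psi(x, t)$. Consequently, \eqref{eq5-1} decouples into a one-parameter family of fully nonlinear elliptic Yamabe-type equations
\[ \sigma_k \bigl( W[u(\cdot, t)] \bigr) = e^{2 k u(\cdot, t)} \psi(\cdot, t) \]
on the closed manifold $M$, one equation for each $t \in [0, 1]$. The compatibility condition \eqref{eq5-2} says exactly that $u_0$ (resp. $u_1$) is already an admissible solution of this equation at $t = 0$ (resp. $t = 1$); combined with the uniqueness of the admissible solution at each slice, this will automatically yield the Dirichlet data $u(\cdot, 0) = u_0$ and $u(\cdot, 1) = u_1$.

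For each $t$, I would solve the elliptic equation by the continuity method, deforming $\psi(\cdot, t)$ from a reference datum for which admissibility is obvious (e.g., a constant right-hand side for which a constant function solves the equation, using $\lambda(g^{-1} A) \in \Gamma_k$). The a priori estimates proceed as follows. A $C^0$ bound is obtained at the max/min points of $u$, where $\nabla u = 0$ and the sign of $\nabla^2 u$ pins down $W[u]$, combined with the strict monotonicity of $e^{2 k u}$ on the right-hand side. The $C^1$ and $C^2$ estimates are then obtained by the maximum principle applied to standard auxiliary test functions; here the crucial mechanism is that $\gamma > 0$ produces a good positive term of the form $\gamma \, \mathrm{tr}(F) \, \Delta u$ upon differentiating the equation, which absorbs the bad contributions coming from the gradient-quadratic terms in $W[u]$ and from $\nabla \psi$, exactly along the lines of \cite{Guan08, Guan09, Sui2024}. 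Concavity of $\sigma_k^{1/k}$ on $\Gamma_k$ (Proposition \ref{prop5}) then feeds into Evans-Krylov to upgrade the $C^2$ estimate to $C^{2, \alpha}$, followed by Schauder bootstrap. Uniqueness at fixed $t$ follows from a standard comparison argument: at the maximum of $u - v$ of two admissible solutions, $\nabla u = \nabla v$ together with $\nabla^2 u \leq \nabla^2 v$ and $\Delta(u - v) \leq 0$ force $W[u] \leq W[v]$ there, whence $\sigma_k(W[u]) \leq \sigma_k(W[v])$ and the equation gives $e^{2 k u} \leq e^{2 k v}$ at that point, i.e., $u \leq v$ there and hence everywhere; symmetry gives $u \equiv v$.

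Smooth dependence on $t$ then follows from the implicit function theorem: the linearization of $G(u) := e^{- 2 k u} \sigma_k(W[u]) - \psi$ with respect to $u$ is a uniformly elliptic second-order operator (uniform ellipticity being guaranteed by $\gamma > 0$) with strictly negative zeroth-order coefficient $- 2 k e^{- 2 k u} \sigma_k(W[u]) < 0$, hence invertible on $C^{2, \alpha}(M)$. Patching the slice-wise solutions yields $u \in C^\infty(M \times [0, 1])$ with the claimed uniform estimates. I expect the main technical obstacle to be the uniform $C^2$ bound: because $W[u]$ contains the arbitrary gradient-quadratic tensors $s \, d u \otimes d u$ and $-\tfrac{r}{2} |\nabla u|^2 g$, controlling the largest eigenvalue of $\nabla^2 u$ at a maximum point requires carefully harvesting the $\gamma \Delta u$ contribution to dominate these bad terms — this is precisely the step where the hypothesis $\gamma > 0$ is indispensable and distinguishes the present problem from the Gursky-Streets setting where $\gamma = 0$ and additional structural cancellations must be exploited.
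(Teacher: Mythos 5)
Your proposal is correct and follows the same overall strategy as the paper: observe that \eqref{eq5-1} decouples into a one-parameter family of second-order elliptic $\sigma_k$-Yamabe type equations on $M$ indexed by $t$, solve each by the continuity method together with a priori $C^2$ estimates in $x$ (for which the paper cites \cite{Guan08}), note that compatibility \eqref{eq5-2} plus slice-wise uniqueness furnishes the Dirichlet data, and then establish smoothness jointly in $(x, t)$. The paper uses $t$ itself as the continuity parameter, starting from $u_0$ at $t = 0$, with openness by the implicit function theorem and closedness by estimates, whereas you deform $\psi(\cdot, t)$ at each fixed slice; this is a cosmetic variation. The more substantive divergence is in the treatment of $t$-regularity: you invoke the implicit function theorem directly, using invertibility of the linearized operator (correct, via the strictly negative zeroth-order coefficient $-2 k e^{- 2 k u} \sigma_k\big(W[u]\big)$), whereas the paper differentiates \eqref{eq5-1} in $t$ to obtain a linear elliptic equation for $u_t$, derives explicit barrier bounds for $|u_t|$ at $t \in \{0, 1\}$ and a maximum-principle bound in the interior, and then applies Schauder theory. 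Both routes are sound; the paper's is more quantitative about the constant in the $|u_t|$ bound, which is immaterial to the statement as written. One small slip in your sketch: for a constant $u \equiv c$, $\sigma_k\big(W[u]\big) = \sigma_k(A)$ is generically non-constant, so the reference datum for your deformation should be $e^{- 2 k c} \sigma_k(A)$ rather than a constant right-hand side; this is easily repaired and does not affect the argument.
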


The proof of Theorem \ref{Thm4} exploits the technique in Schauder theory, which also works for other type of fully nonlinear elliptic equations.

Based on Theorem \ref{Theorem1}, we next study the degenerate equations.
First, we observe that
if we merely assume $\gamma \geq 0$ but $r \neq 0$, we can still obtain $C^1$ estimate for \eqref{eq1}--\eqref{eq1-4} which is independent of $\inf\psi$ and $\gamma \geq 0$:
\begin{equation} \label{eq1-11}
\max\limits_{M \times [0, 1]} \Big( |u| + |u_t| + |\nabla u| \Big) \leq C.
\end{equation}
Combining \eqref{eq1-11} and Theorem \ref{Theorem1}, we obtain the existence of a Lipschitz continuous viscosity solution (see Definition \ref{Def1}) to \eqref{eq1}--\eqref{eq1-4}.

\begin{thm} \label{Thm-weak solution}
Let $(M, g)$ be a compact Riemannian manifold with $\lambda(A) \in \Gamma_k$ and suppose that $r \neq 0$. Given a smooth nonnegative function $\psi(x, t)$ defined on $M \times [0, 1]$ and smooth function $u_0$, $u_1$ defined on $M$ with $\lambda\big( W[u_0] \big) \in \Gamma_k$ and $\lambda\big( W[u_1] \big) \in \Gamma_k$, there exists a Lipschitz continuous viscosity solution $u$ of \eqref{eq1}--\eqref{eq1-4} on $M \times [0, 1]$.
\end{thm}

Since the estimates in Theorem \ref{Theorem1} are independent of $\inf \psi$, we obtain the existence of $C^{1, 1}$ solution to the degenerate equation \eqref{eq1}--\eqref{eq1-4} when (i) $\gamma > 0$ and  $1 \leq k \leq n$ or (ii) $r > 0$ and $2 s k \leq r n$. In particular, we obtain the following existence result.

\begin{thm} \label{Theorem2}
Let $(M, g)$ be a compact Riemannian manifold with $\lambda(A) \in \Gamma_k$ and suppose that (i) $\gamma > 0$ and  $1 \leq k \leq n$ or (ii) $r > 0$ and $2 s k \leq r n$. Given smooth function $u_0$, $u_1$ defined on $M$ with $\lambda\big( W[u_0] \big) \in \Gamma_k$ and $\lambda\big( W[u_1] \big) \in \Gamma_k$, there exists an admissible solution $u \in C^{1, 1} \big( M \times [0, 1] \big)$ which satisfies
\begin{equation} \label{eq1-1}
\left\{
\begin{aligned}
& u_{tt} \sigma_k \big( W[u] \big) - \sigma_{k}^{ij} \big( W[u] \big)  u_{ti} u_{tj} = 0, \\
& u(\cdot, 0) = u_0, \quad  u(\cdot, 1) = u_1
\end{aligned}
\right.
\end{equation}
almost everywhere on $M \times [0, 1]$. Moreover, we have the following estimate
\[ \sup \limits_{M \times [0, 1]} \Big( |u| + |u_t| + |\nabla u| + u_{tt} + |\nabla^2 u| + |\nabla u_t| \Big) \leq C. \]
For  (i) $\gamma > 0$,  $r \geq 0$ and $2 s k \leq r n$ or (ii) $r > 0$ and $2 s k \leq r n$, the $C^{1, 1}$ admissible solution is unique.
\end{thm}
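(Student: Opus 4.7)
\noindent The approach is to deduce Theorem \ref{Theorem2} from Theorem \ref{Theorem1} by a vanishing-viscosity type approximation, using in an essential way that the $C^{1,1}$ estimates in Theorem \ref{Theorem1} do not depend on $\inf \psi$. First, for each $\varepsilon > 0$, set $\psi_\varepsilon \equiv \varepsilon$ on $M \times [0,1]$. By Theorem \ref{Theorem1}, there exists a unique smooth admissible solution $u^\varepsilon$ of \eqref{eq1}--\eqref{eq1-4} with $\lambda\big(W[u^\varepsilon]\big) \in \Gamma_k$ and $u^\varepsilon_{tt} > 0$ pointwise, satisfying
\[ \max_{M \times [0,1]} \Big( |u^\varepsilon| + |u^\varepsilon_t| + |\nabla u^\varepsilon| + u^\varepsilon_{tt} + |\nabla^2 u^\varepsilon| + |\nabla u^\varepsilon_t| \Big) \leq C, \]
with $C$ independent of $\varepsilon$. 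This uniform bound is the crux of what Theorem \ref{Theorem1} buys us.

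Next, by Arzel\`a-Ascoli, a subsequence $u^{\varepsilon_j}$ converges in $C^{1,\alpha}(M \times [0,1])$ for every $\alpha \in (0,1)$ to a limit $u \in C^{1,1}(M \times [0,1])$ whose second derivatives are the weak-$*$ $L^\infty$ limits of those of $u^{\varepsilon_j}$. Uniform convergence on the boundary slices $t = 0, 1$ preserves the Dirichlet data $u_0, u_1$, and the uniform bound passes to $u$. Since second derivatives of BV-type functions converge a.e.\ along a further subsequence (one may argue through difference quotients, which are bounded uniformly and converge a.e.\ by the dominated convergence theorem), the pointwise conditions $\lambda\big(W[u^{\varepsilon_j}]\big) \in \Gamma_k$ and $u^{\varepsilon_j}_{tt} > 0$ yield $\lambda\big(W[u]\big) \in \overline{\Gamma}_k$ and $u_{tt} \geq 0$ almost everywhere.

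Finally, to identify $u$ as a viscosity solution of the degenerate equation \eqref{eq1-1}, invoke the standard stability of viscosity solutions under uniform convergence together with uniform convergence $\psi_\varepsilon \to 0$: each $u^\varepsilon$ is a classical, hence viscosity, solution of the non-degenerate equation on the admissible cone, and the limiting operator is degenerate elliptic on $\overline{\Gamma}_k \cap \{u_{tt} \geq 0\}$. To avoid ambiguity in the degenerate regime it is convenient to work with the equivalent form \eqref{eq2}, $u_{tt}^{1 - k} \sigma_k(E_u) = \psi$, which is monotone in $E_u$ on the set $\{E_u \geq 0\}$, and to restrict test functions to those whose pure Hessian pieces lie in the relevant closed cones.

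The main obstacle is the viscosity-solution verification in the limit: one must check that whenever a smooth $\varphi$ touches $u$ from above (resp.\ below) at an interior point, the appropriate subsolution (resp.\ supersolution) inequality for the degenerate operator holds. The subtlety is that the natural operator in \eqref{eq1-1} is only degenerate elliptic on a cone, so some care is needed in selecting admissible test functions and in extracting the cone condition from the $C^{1,\alpha}$ convergence and the weak-$*$ Hessian convergence. Once this is set up, the stability argument is routine and uniqueness is not asserted. All the ingredients for a priori estimates and existence in the non-degenerate case have been established in Theorem \ref{Theorem1}, so no new estimates are needed here.
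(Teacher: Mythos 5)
Your proposal follows the same vanishing-viscosity strategy as the paper: solve the regularized problem with $\psi \equiv \varepsilon$ via Theorem \ref{Theorem1}, use the $\varepsilon$-independent $C^{1,1}$ estimates to pass to a $C^{1,1}$ limit, and identify that limit as a viscosity solution in the sense of Definition \ref{Def1}. The only cosmetic difference is that the paper invokes the comparison principle to obtain monotone convergence of the full family $u^\varepsilon$ as $\varepsilon \downarrow 0$, whereas you extract a $C^{1,\alpha}$-convergent subsequence by Arzel\`a--Ascoli.
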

We note that $u$ in Theorem \ref{Theorem2} is a viscosity solution. The existence part of Theorem \ref{Theorem2} was proved in \cite{HeXuZhang} for $W[u] = A_u$ assuming $k \leq \frac{n}{2}$, under which  uniqueness result was also obtained because of the pattern $d u \otimes d u  - \frac{1}{2} |\nabla u|^2 g$ in \eqref{eq1-3} which can lead to an approximation result. Our uniqueness result exploits the same approximation method, while existence part enriches previous result.

This paper is organized as follows. The necessary preliminaries and estimates for $u$ and $u_t$ are presented in section 2. Section 3 is on estimate of $|\nabla u|$, while section 4 and 5 are devoted to second order estimates. The existence is proved in section 6 and uniqueness is given in section 7.

\medskip

\vspace{4mm}

\section{Preliminary estimates}

\vspace{4mm}

In this section, we first provide some basic facts.
Let $M_{n + 1}$ be the set of real symmetric $(n + 1) \times (n + 1)$ matrices. For $R \in M_{n + 1}$, we write
\[ R = (r_{IJ})_{0 \leq I, J \leq n}, \]
and denote the submatrix $(r_{ij})_{1 \leq i, j \leq n}$ by $r$.
We have the following fact on concavity, which was proved in He, Xu and Zhang \cite{HeXuZhang}.
\begin{prop} \label{prop5}
The set
\[ \mathcal{S} = \{ R \in M_{n + 1} | \lambda(r) \in \Gamma_k, \, F_k (R):= r_{00} \sigma_k (r) - \sigma_k^{ij} (r) r_{0i} r_{0j} > 0 \} \]
is a convex cone. Besides,
$F_k^{\frac{1}{k + 1}}(R)$, and hence $\ln F_k (R)$ are concave on $\mathcal{S}$ for $1 \leq k \leq n$.
\end{prop}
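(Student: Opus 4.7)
The plan is to recognize $F_k$ as a hyperbolic polynomial of degree $k+1$ on $M_{n+1}$ and to read off the proposition from Garding's theorem. The starting point is the algebraic identity
\[ \sigma_{k+1}(R) \;=\; r_{00}\,\sigma_k(r) + \sigma_{k+1}(r) - \sigma_k^{ij}(r)\,r_{0i}r_{0j}, \]
obtained by splitting the sum of $(k+1)\times(k+1)$ principal minors of $R$ according to whether the index $0$ is present, and expanding those that contain it via the block cofactor formula. This rewrites
\[ F_k(R) \;=\; \sigma_{k+1}(R) - \sigma_{k+1}(r), \]
displaying $F_k$ as a difference of two polynomials that are individually hyperbolic with respect to $I_{n+1}$ (viewing $\sigma_{k+1}(r)$ as a polynomial on $M_{n+1}$ depending only on the bottom $n\times n$ block).

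The heart of the proof is hyperbolicity of $F_k$ with respect to $I_{n+1}$: for every $R_0 \in M_{n+1}$ the polynomial $t \mapsto F_k(tI_{n+1}+R_0)$ should have only real roots. After rotating in the bottom block so that $r_0 = \mathrm{diag}(\lambda_1,\dots,\lambda_n)$, and writing $\tilde\mu_0 \le \cdots \le \tilde\mu_n$ for the eigenvalues of $R_0$, Cauchy's interlacing theorem yields $\tilde\mu_0 \le \lambda_1 \le \tilde\mu_1 \le \cdots \le \lambda_n \le \tilde\mu_n$. By the identity of Step~1, $F_k(tI+R_0) = \sigma_{k+1}(t+\tilde\mu) - \sigma_{k+1}(t+\lambda)$, and each summand is real-rooted in $t$ by repeated application of Rolle's theorem (each is, up to a constant, a higher derivative of $\prod_j(t+\tilde\mu_j)$, respectively $\prod_j(t+\lambda_j)$). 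The interlacing of the spectra transfers to interlacing of the roots of the two auxiliary polynomials, and I would then invoke the classical real-rootedness theorem for suitable linear combinations of interlacing polynomials to conclude that $F_k(tI+R_0)$, whose leading coefficient is the positive quantity $\binom{n+1}{k+1} - \binom{n}{k+1} = \binom{n}{k}$, has $k+1$ real roots.

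With hyperbolicity established, $F_k(I_{n+1}) = \binom{n}{k} > 0$ and $\lambda(I_n) \in \Gamma_k$ place $I_{n+1}$ in $\mathcal{S}$, and Garding's theorem asserts that the connected component of $\{F_k > 0\}$ through $I_{n+1}$ is an open convex cone on which $F_k^{1/(k+1)}$ is concave. Concavity of $\ln F_k$ then follows from that of $F_k^{1/(k+1)}$ and monotonicity of $\log$. To identify this Garding cone with $\mathcal{S}$, I would exploit that $\sigma_k^{ij}(r)$ is positive definite whenever $\lambda(r) \in \Gamma_k$: for fixed such $r$, the slice $\{(r_{00},r_{0\cdot}) : F_k > 0\}$ is the strict epigraph of the convex function $r_{0\cdot}\mapsto \sigma_k^{ij}(r)r_{0i}r_{0j}/\sigma_k(r)$, hence convex and connected, so $\mathcal{S}$ is path-connected and contains $I_{n+1}$; conversely, any path in $M_{n+1}$ that leaves $\{\lambda(r)\in\Gamma_k\}$ must first hit $\partial\Gamma_k$, where $\sigma_k(r)=0$ and the quadratic-in-$r_{0\cdot}$ structure of $F_k$ forces $F_k$ to vanish as well.

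The main obstacle is the second step: translating the Cauchy interlacing of matrix spectra into genuine interlacing of the roots of the two $\sigma_{k+1}$-polynomials, and then applying the correct real-rootedness theorem for their difference while tracking the sign of the leading coefficient. This is precisely the ``Garding theory plus real-rootedness of interlacing polynomials'' machinery flagged in the introduction as the key technical ingredient from \cite{HeXuZhang}; once it is in place, the remainder of the proof is a routine application of Garding's theorem.
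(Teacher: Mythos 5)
The paper gives no proof of this proposition; it is simply quoted from He, Xu and Zhang \cite{HeXuZhang}, whose argument the introduction describes as resting on Garding's theory of hyperbolic polynomials and on real-rootedness results for interlacing polynomials. Your sketch---identifying $F_k(R)=\sigma_{k+1}(R)-\sigma_{k+1}(r)$, establishing hyperbolicity of $F_k$ in the direction $I_{n+1}$ via Cauchy interlacing and an Obreschkoff-type real-rootedness theorem, and then invoking Garding's theorem and identifying the Garding cone with $\mathcal{S}$---is a correct outline of precisely that route, and the step you flag as the main obstacle (passing from spectral interlacing to real-rootedness of the relevant difference of derivative polynomials) is indeed the genuine technical content of the cited proof.
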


We also need the following fact which can be found in Gursky and Streets \cite{Gursky-Streets2018}.
\begin{prop}  \label{prop4}
Given a symmetric $n \times n$ matrix $A$ and a vector $X = (X_1, \cdots, X_n)$, we have
\[ \sigma_k^{ij} (A - X \otimes X) X_i X_j =  \sigma_k^{ij} (A) X_i X_j , \]
\[ \sigma_k (A - X \otimes X) = \sigma_k (A) - \sigma_k^{ij}(A) X_i X_j. \]
\end{prop}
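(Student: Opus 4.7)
The plan is to prove both identities simultaneously by showing that the scalar function
\[ f(t) \,:=\, \sigma_k(A - t\, X \otimes X) \]
is affine in $t \in \mathbb{R}$. Once affineness is established, the chain rule gives
\[ f'(t) \,=\, -\,\sigma_k^{ij}(A - t\, X \otimes X)\, X_i X_j, \]
so constancy of $f'$ forces $\sigma_k^{ij}(A - X \otimes X)\, X_i X_j = \sigma_k^{ij}(A)\, X_i X_j$, which is the first identity; and the affine relation $f(1) = f(0) + f'(0)$ is exactly the second identity.

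To establish affineness, I would use the principal-minor representation $\sigma_k(B) = \sum_{|I|=k} \det(B[I;I])$, so that
\[ \sigma_k(A - t\, X \otimes X) \,=\, \sum_{|I|=k} \det\bigl( A[I;I] - t\, X_I X_I^{\mathrm{T}} \bigr), \]
where $X_I = (X_i)_{i \in I}$ is the subvector indexed by $I$. Each principal submatrix $A[I;I]$ is perturbed by a rank-one symmetric term, and the matrix determinant lemma gives
\[ \det\bigl( A[I;I] - t\, X_I X_I^{\mathrm{T}} \bigr) \,=\, \det\bigl( A[I;I] \bigr) \,-\, t\, X_I^{\mathrm{T}}\, \mathrm{adj}\bigl( A[I;I] \bigr)\, X_I, \]
which is manifestly affine in $t$. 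Summing over $I$ yields the affineness of $f$.

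The only step with content is this linearity observation, and the matrix determinant lemma makes it essentially immediate, so there is no serious obstacle. The proposition reduces to the structural fact that a symmetric rank-one perturbation affects every principal minor affinely in the perturbation parameter; once this is noted, both identities fall out by comparing $f(1)$ and $f'(1)$ with the constant linear part of $f$.
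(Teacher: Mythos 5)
Your argument is correct. The paper does not reprove this fact but cites it to Gursky--Streets, and the underlying idea there is the same one you isolate: because $X \otimes X$ has rank one, $t \mapsto \sigma_k(A - t\,X\otimes X)$ is affine, and the two identities are just $f'(1)=f'(0)$ and $f(1)=f(0)+f'(0)$. Your implementation via the principal-minor expansion $\sigma_k(B)=\sum_{|I|=k}\det(B[I;I])$ together with the matrix determinant lemma $\det(B-t\,uu^{\mathrm T})=\det(B)-t\,u^{\mathrm T}\operatorname{adj}(B)\,u$ is clean and self-contained, and both ingredients hold as polynomial identities, so no nondegeneracy assumptions are needed. The chain-rule computation $f'(t)=-\sigma_k^{ij}(A-tX\otimes X)X_iX_j$ is consistent with the paper's convention $\sigma_k^{ij}=\partial\sigma_k/\partial B_{ij}$. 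In short: correct, complete, and the same route as the cited source.
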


The following proposition is proved in \cite{HeXuZhang}.
\begin{prop} \label{prop6}
Let $u$ be a $C^2$ function. Suppose that $\lambda \big( W [ u ] \big) \in \Gamma_k$, $u_{tt} > 0$ and $\sigma_k (E_u) > 0$, then $\lambda(E_u) \in \Gamma_k$.
\end{prop}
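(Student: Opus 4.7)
The plan is to connect $W := W[u]$ to $E_u/u_{tt}$ by a one-parameter family of rank-one perturbations and use Newton-Maclaurin inequalities to prevent the path from exiting $\Gamma_k$. Since $u_{tt}>0$, I would set $\tilde X_i := u_{ti}/\sqrt{u_{tt}}$, so that $E_u = u_{tt}\bigl(W - \tilde X \otimes \tilde X\bigr)$ and the desired conclusion becomes $\lambda\bigl(W - \tilde X \otimes \tilde X\bigr) \in \Gamma_k$. I would then introduce the interpolation
\[ W(s) := W - s\,\tilde X \otimes \tilde X, \qquad s \in [0,1], \]
and apply Proposition \ref{prop4} with the vector $\sqrt{s}\,\tilde X$ to get the key affine identity
\[ \sigma_j\bigl(W(s)\bigr) = \sigma_j(W) - s\,\sigma_j^{pq}(W)\,\tilde X_p \tilde X_q, \qquad 1 \leq j \leq n. \]
At $s=0$, the assumption $\lambda(W) \in \Gamma_k$ gives $\sigma_j(W(0)) > 0$ for every $j \leq k$; at $s=1$, the assumption $\sigma_k(E_u)>0$ translates into $\sigma_k(W(1)) = \sigma_k(E_u)/u_{tt}^k > 0$. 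Because $s \mapsto \sigma_k(W(s))$ is affine with strictly positive values at both endpoints, it stays strictly positive on the entire interval $[0,1]$.

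Next I would upgrade this to $\lambda(W(1)) \in \Gamma_k$ via a standard connectedness argument. The set $I := \{s \in [0,1]: \lambda(W(s)) \in \Gamma_k\}$ is open in $[0,1]$ (since $\Gamma_k$ is open) and contains $0$. Suppose for contradiction that $1 \notin I$; then there is a smallest $s_* \in (0,1]$ with $\lambda(W(s_*)) \in \partial\Gamma_k$, and one may pick the smallest index $j_0 \leq k$ with $\sigma_{j_0}(W(s_*)) = 0$ while $\sigma_1(W(s_*)), \ldots, \sigma_{j_0-1}(W(s_*)) > 0$. Applying Newton-Maclaurin on $\Gamma_k$ to $\lambda(W(s))$ for $s < s_*$,
\[ \Bigl(\frac{\sigma_k(W(s))}{C_n^k}\Bigr)^{1/k} \leq \Bigl(\frac{\sigma_{j_0}(W(s))}{C_n^{j_0}}\Bigr)^{1/j_0}, \]
and passing to the limit $s \to s_*^-$ would force $\sigma_k(W(s_*)) \leq 0$, contradicting the positivity established in the previous paragraph. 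Hence $1 \in I$, and since $u_{tt}>0$, $\lambda(E_u) = u_{tt}\,\lambda(W(1)) \in \Gamma_k$.

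The main obstacle is that $\sigma_k(E_u) > 0$ by itself does not imply $\lambda(E_u) \in \Gamma_k$: for even $k$ there are many $\lambda$ with $\sigma_k(\lambda) > 0$ yet $\lambda \notin \Gamma_k$. The proof has to exploit both the starting condition $\lambda(W) \in \Gamma_k$ and the rank-one nature of the perturbation $\tilde X \otimes \tilde X$, which is precisely what produces the affine identity from Proposition \ref{prop4}; coupling that affinity with Newton-Maclaurin is what prevents the path $\lambda(W(s))$ from hitting $\partial\Gamma_k$ before reaching $s = 1$.
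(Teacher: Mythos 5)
Your proof is correct. Note that the paper itself does not supply an argument for Proposition~\ref{prop6}; it simply cites \cite{HeXuZhang}, so there is no in-paper proof to compare against. Your interpolation argument is exactly the standard one: set $\tilde X = u_{ti}/\sqrt{u_{tt}}$ so that $E_u/u_{tt} = W - \tilde X\otimes\tilde X$, observe from Proposition~\ref{prop4} (applied with the vector $\sqrt{s}\,\tilde X$) that each $\sigma_j\bigl(W - s\,\tilde X\otimes\tilde X\bigr)$ is affine in $s$, use the two endpoint positivity hypotheses to conclude $\sigma_k\bigl(W(s)\bigr)>0$ on all of $[0,1]$, and then rule out an exit from $\Gamma_k$ via the Maclaurin inequality. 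Two small remarks on exposition, neither of which is a gap: (i) the existence of a minimal exit time $s_*$ follows simply because $[0,1]\setminus I$ is closed and contains $1$, and by continuity $\sigma_j\bigl(W(s_*)\bigr)\geq 0$ for $j\leq k$ with $\sigma_{j_0}\bigl(W(s_*)\bigr)=0$ for some $j_0\leq k$ --- there is no need to single out the \emph{smallest} such $j_0$; and (ii) an equivalent phrasing is to use Garding's characterization of $\Gamma_k$ as the connected component of $\{\sigma_k>0\}$ containing $(1,\dots,1)$: since $\sigma_k\bigl(W(s)\bigr)>0$ for all $s\in[0,1]$ and $\lambda\bigl(W(0)\bigr)\in\Gamma_k$, the continuous path $\lambda\bigl(W(s)\bigr)$ stays in that component. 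Either way the conclusion $\lambda\bigl(W(1)\bigr)\in\Gamma_k$, hence $\lambda(E_u)=u_{tt}\lambda\bigl(W(1)\bigr)\in\Gamma_k$, follows.
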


By direct computation, the linearized operator associated to equation \eqref{eq2} is
\[  \begin{aligned}
& \mathcal{L} (v) := (1 - k) u_{tt}^{- k} \sigma_k (E_u) v_{tt} \\
& + u_{tt}^{1 - k} \sigma_{k}^{ij} (E_u) \Big( v_{tt} W_{ij}[u] + u_{tt} \mathcal{M}_{ij} (v) - u_{t i}  v_{tj} - v_{ti} u_{tj} \Big),
\end{aligned} \]
where
\[ \mathcal{M} (v) := g^{-1} \Big( \nabla^2 v + s d u \otimes d v + s d v \otimes d u + \big( \gamma \Delta v - r \langle \nabla u, \nabla v \rangle \big) g  \Big). \]

\begin{prop} \label{Prop7}
When $\psi > 0$ throughout $M \times [0, 1]$, equation \eqref{eq1} (or equivalently, \eqref{eq2}) is elliptic for $C^2$ solution $u$ with $\lambda\big( W[u] \big) \in \Gamma_k$.
\end{prop}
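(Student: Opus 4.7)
The plan is to establish ellipticity by showing that the principal symbol of the linearized operator $\mathcal{L}$ displayed above the statement is strictly positive at every nonzero cotangent covector $\xi=(\xi_{0},\xi_{1},\ldots,\xi_{n})$ on $M\times[0,1]$. Concretely, I would freeze the coefficients at a point, make the substitution $v_{tt}\mapsto\xi_{0}^{2}$, $v_{ti}\mapsto\xi_{0}\xi_{i}$, $v_{ij}\mapsto\xi_{i}\xi_{j}$ and $\Delta v\mapsto|\xi'|^{2}$ with $\xi'=(\xi_{1},\ldots,\xi_{n})$ inside the explicit formula for $\mathcal{L}(v)$, and keep only the terms quadratic in $\xi$; the gradient-type contributions in $\mathcal{M}(v)$ are lower order and do not enter the symbol.

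The core of the argument is to bring the resulting expression into a manifestly nonnegative form. For this I would use the definitional relation $u_{tt}W_{ij}[u]=E_{u,ij}+u_{ti}u_{tj}$ together with Euler's identity $\sigma_{k}^{ij}(E_{u})E_{u,ij}=k\sigma_{k}(E_{u})$. After multiplying through by $u_{tt}^{k}$ and combining the two $\xi_{0}^{2}$-terms via these identities, the cross-terms in $\xi_{0}\xi_{i}$ and $\xi_{i}\xi_{j}$ should assemble into a perfect square, yielding
\[
u_{tt}^{k}\,\sigma(\mathcal{L})(\xi)=\sigma_{k}(E_{u})\xi_{0}^{2}+\sigma_{k}^{ij}(E_{u})\bigl(u_{ti}\xi_{0}-u_{tt}\xi_{i}\bigr)\bigl(u_{tj}\xi_{0}-u_{tt}\xi_{j}\bigr)+\gamma u_{tt}^{2}\bigl(\sigma_{k}^{ij}(E_{u})\delta_{ij}\bigr)|\xi'|^{2}.
\]

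To conclude, I would invoke Proposition \ref{prop6}. The assumption $\lambda(W[u])\in\Gamma_{k}$ makes $\sigma_{k}^{ij}(W[u])$ positive definite, so rewriting \eqref{eq1} as $u_{tt}\sigma_{k}(W[u])=\psi+\sigma_{k}^{ij}(W[u])u_{ti}u_{tj}\geq\psi>0$ forces $u_{tt}>0$; identity \eqref{eq4} then yields $\sigma_{k}(E_{u})=u_{tt}^{k-1}\psi>0$. Proposition \ref{prop6} now gives $\lambda(E_{u})\in\Gamma_{k}$, which makes $\sigma_{k}^{ij}(E_{u})$ positive definite as well. All three summands in the displayed identity are therefore nonnegative; if $\xi_{0}\neq0$ the first term is strictly positive, while if $\xi_{0}=0$ the middle term collapses to $u_{tt}^{2}\sigma_{k}^{ij}(E_{u})\xi_{i}\xi_{j}>0$ as soon as some $\xi_{i}\neq0$. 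Hence $\sigma(\mathcal{L})(\xi)>0$ for every $\xi\neq0$, which is ellipticity, and the argument is insensitive to whether $\gamma$ vanishes. The only step requiring any real care is the algebraic rearrangement to the perfect-square form; once that identity is in hand the conclusion is immediate, and I do not anticipate a substantive obstacle.
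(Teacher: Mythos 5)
Your proposal is correct and follows essentially the same route as the paper: you freeze coefficients, contract the linearization with $\xi$, use $u_{tt}W_{ij}[u]=(E_u)_{ij}+u_{ti}u_{tj}$ and Euler's identity to reorganize the $\xi_0^2$ terms, obtain the same sum of a $\sigma_k(E_u)\xi_0^2$ term, a perfect-square term contracted against $\sigma_k^{ij}(E_u)$, and a $\gamma$ trace term, and then invoke Proposition~\ref{prop6} for positivity. You are slightly more explicit than the paper in verifying the hypotheses of Proposition~\ref{prop6} (deriving $u_{tt}>0$ and $\sigma_k(E_u)>0$ from $\psi>0$ and $\lambda(W[u])\in\Gamma_k$ via the equation and \eqref{eq4}), and in noting that the argument does not need $\gamma>0$; both points are correct and consistent with the paper.
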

\begin{proof}
It suffices to consider the operator
\[  \begin{aligned}
& \tilde{\mathcal{L}} (v) = (1 - k) u_{tt}^{- 1} \sigma_k (E_u) v_{tt} \\
& + \sigma_{k}^{ij} (E_u) \Big( v_{tt} W_{ij}[u] + u_{tt} \big( v_{ij} + \gamma \Delta v \delta_{ij} \big) - u_{t i}  v_{tj} - v_{ti} u_{tj} \Big).
\end{aligned} \]
By direct computation, we have
\[ \begin{aligned}
\frac{\partial\tilde{\mathcal{L}}(v)}{\partial v_{tt}} = & (1 - k) u_{tt}^{- 1} \sigma_k (E_u) + \sigma_{k}^{ij} (E_u) W_{ij}[u], \\
\frac{\partial\tilde{\mathcal{L}}(v)}{\partial v_{ti}} = & - \sigma_{k}^{ij} (E_u)  u_{tj}, \\
\frac{\partial\tilde{\mathcal{L}}(v)}{\partial v_{tj}} = & - \sigma_{k}^{ij} (E_u)  u_{ti},  \\
\frac{\partial\tilde{\mathcal{L}}(v)}{\partial v_{ij}} = & \Big( \sigma_{k}^{ij} (E_u) + (n - k + 1) \sigma_{k - 1}(E_u) \gamma \delta_{ij} \Big) u_{tt}.
\end{aligned} \]
For any $(\xi_0, \xi_1, \cdots, \xi_n) \in \mathbb{R}^{n + 1}$,
\begin{equation*}
\begin{aligned}
& \Big( (1 - k) u_{tt}^{- 1} \sigma_k (E_u) + \sigma_{k}^{ij} (E_u) W_{ij}[u] \Big) \xi_0^2 - \sigma_{k}^{ij} (E_u)  u_{tj} \xi_0 \xi_i  - \sigma_{k}^{ij} (E_u)  u_{ti} \xi_0 \xi_j \\
& +  \Big( \sigma_{k}^{ij} (E_u) + (n - k + 1) \sigma_{k - 1}(E_u) \gamma \delta_{ij} \Big) u_{tt} \xi_i \xi_j \\
= &  \bigg( (1 - k) u_{tt}^{- 1} \sigma_k (E_u) + \sigma_{k}^{ij} (E_u) \frac{(E_u)_{ij} + u_{ti} u_{tj}}{u_{tt}} \bigg) \xi_0^2 - \sigma_{k}^{ij} (E_u)  u_{tj} \xi_0 \xi_i  \\
& - \sigma_{k}^{ij} (E_u)  u_{ti} \xi_0 \xi_j  +  \Big( \sigma_{k}^{ij} (E_u) + (n - k + 1) \sigma_{k - 1}(E_u) \gamma \delta_{ij} \Big) u_{tt} \xi_i \xi_j \\
= &  \Big( (1 - k) u_{tt}^{- 1} \sigma_k (E_u) + u_{tt}^{- 1} \sigma_{k}^{ij} (E_u) (E_u)_{ij} \Big) \xi_0^2 \\ & + \sigma_{k}^{ij} (E_u) u_{ti} u_{tj} u_{tt}^{- 1} \xi_0^2 - \sigma_{k}^{ij} (E_u)  u_{tj} \xi_0 \xi_i
- \sigma_{k}^{ij} (E_u)  u_{ti} \xi_0 \xi_j  + \sigma_{k}^{ij} (E_u) u_{tt} \xi_i \xi_j \\
& +  (n - k + 1) \sigma_{k - 1}(E_u) \gamma  u_{tt} \sum\limits_{i = 1}^n \xi_i^2 \\
= &   u_{tt}^{- 1} \sigma_k (E_u) \xi_0^2 + (n - k + 1) \sigma_{k - 1}(E_u) \gamma  u_{tt} \sum\limits_{i = 1}^n \xi_i^2 \\
& + \sigma_{k}^{ij} (E_u) \bigg( \frac{u_{ti}}{\sqrt{u_{tt}}} \xi_0 - \sqrt{u_{tt}} \xi_i \bigg) \bigg( \frac{u_{tj}}{\sqrt{u_{tt}}} \xi_0 - \sqrt{u_{tt}} \xi_j \bigg).
\end{aligned}
\end{equation*}
By Proposition \ref{prop6}, we can see the ellipticity of \eqref{eq1} or \eqref{eq2}.
\end{proof}

Next we give some preliminary estimates for admissible solutions to \eqref{eq1}--\eqref{eq1-4}.

\vspace{2mm}

\subsection{$C^0$ estimate}~

\vspace{2mm}

In this subsection, we derive the $C^0$ estimate, following the idea of Gursky-Streets \cite{Gursky-Streets2018}.

\begin{prop} \label{Prop1}
Let $u$ be an admissible solution to \eqref{eq1}--\eqref{eq1-4}. Then
\[ \max\limits_{M \times [0, 1]} |u| \leq C, \]
where $C$ is a positive constant depending only on $|u_0|_{C^0(M)}$, $|u_1|_{C^0(M)}$ and the upper bound of $\psi$.
\end{prop}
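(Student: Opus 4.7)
The strategy is to bound $u$ from above and below by quite different means.

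For the upper bound, admissibility gives $u_{tt}>0$, so $t\mapsto u(x,t)$ is strictly convex for each fixed $x$, whence
\[
u(x,t)\le (1-t)u_0(x)+tu_1(x)\le \max_M|u_0|+\max_M|u_1|.
\]
This step uses neither $\psi$ nor the precise form of $W[u]$.

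For the lower bound I would construct an $x$-independent barrier $\phi(t)$ that strictly subsolves the equation. Since $\lambda(A)\in\Gamma_k$, the quantity $c_0:=\min_M \sigma_k(g^{-1}A)$ is positive, so setting $A_0:=\sup_{M\times[0,1]}\psi/c_0+1$, the quadratic
\[
\phi(t):=\min_M u_0+\bigl(\min_M u_1-\min_M u_0\bigr)t+\tfrac{A_0}{2}(t^2-t)
\]
has no $x$-dependence, whence $W[\phi]=g^{-1}A\in\Gamma_k$ and $E_\phi=\phi_{tt}W[\phi]=A_0\,g^{-1}A$, so $\phi$ is admissible and
\[
\phi_{tt}^{\,1-k}\sigma_k(E_\phi)=A_0\,\sigma_k(g^{-1}A)(x)\ge A_0 c_0>\sup\psi\ge\psi.
\]
By construction $\phi(0)\le u_0$ and $\phi(1)\le u_1$ pointwise on $M$, so $u\ge\phi$ on the parabolic boundary. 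Invoking the comparison principle between the admissible strict subsolution $\phi$ and the admissible solution $u$ — available from the concavity of $\ln F_k$ on $\mathcal{S}$ (Proposition~\ref{prop5}) together with the ellipticity of the equation on admissible functions (Proposition~\ref{Prop7}) — then yields $u\ge\phi$ throughout $M\times[0,1]$. Since $\min_{t\in[0,1]}\phi(t)$ is explicitly bounded below in terms of $|u_0|_{C^0}$, $|u_1|_{C^0}$, $\sup\psi$, and background geometric data, the lower bound follows.

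The main technical point is the comparison principle itself. Because $W[u]$ contains the first-order terms $du\otimes du$ and $|\nabla u|^2$, the operator is genuinely fully nonlinear with first-order dependence. At any would-be interior touching point of $u-\phi$, however, one has $\nabla u=\nabla\phi=0$, so these first-order terms drop out and the usual maximum principle argument for concave elliptic operators — routed through the concavity estimate of Proposition~\ref{prop5} and the positivity of the linearization on $\mathcal{S}$ — produces a contradiction with the strict subsolution inequality $F_k(R_\phi)>\psi=F_k(R_u)$.
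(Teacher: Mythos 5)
Your upper bound is the paper's. For the lower bound you take a genuinely different route. The paper applies the linearized operator $\mathcal{L}$ to $\Psi = u + at(1-t)$ at a hypothetical interior minimum, uses the block--Hessian inequality $\Psi_{tt}\nabla^2\Psi - d\Psi_t\otimes d\Psi_t \geq 0$ together with $\nabla u = 0$ and $\gamma \geq 0$ to get $E_u \geq u_{tt}A$, then the concavity of $\sigma_k^{1/k}$ to bound $\sigma_k^{ij}(E_u)A_{ij}$, and after the algebraic identity from Proposition~\ref{prop4} arrives at $\mathcal{L}(\Psi) < (1+k)\psi - 2a\,\sigma_k(A) < 0$ for $a$ large, contradicting the maximum principle for the elliptic operator $\mathcal{L}$. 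You instead exhibit an explicit $x$-independent admissible quadratic-in-$t$ strict subsolution $\phi$ and run a comparison: at an interior touching point of $u-\phi$ one has $\nabla u = \nabla\phi = 0$ and $D^2(u-\phi) \geq 0$ in the full $(x,t)$-Hessian, hence
\[
R_u - R_\phi \;=\; D^2(u-\phi) + \gamma\,\Delta(u-\phi)\begin{pmatrix}0 & 0 \\ 0 & I_n\end{pmatrix} \;\geq\; 0
\]
(using $\gamma \geq 0$), and then concavity of $F_k$ on the convex cone $\mathcal{S}$ (Proposition~\ref{prop5}) together with positive semidefiniteness of $DF_k = (G^{IJ})$ on $\mathcal{S}$ (extractable from the Proposition~\ref{Prop7} computation after discarding the $\gamma$-contribution) gives $F_k(R_u) \geq F_k(R_\phi) > \sup\psi$, a contradiction. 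Both routes give the same constant with the same dependence. Yours is conceptually cleaner and makes the barrier explicit, but it quietly invokes two facts the paper never isolates as statements — the degenerate ellipticity of $F_k$ in the matrix variable $R$, and the resulting comparison principle for admissible strict subsolutions — and it needs the displayed matrix inequality to pass from $D^2(u-\phi)\geq 0$ to $R_u \geq R_\phi$; making these explicit, as above, closes the argument.
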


\begin{proof}
First, since $u_{tt} > 0$, we have
\[ \frac{u(\cdot, t) - u(\cdot, 0)}{t - 0} < \frac{u(\cdot, 1) - u(\cdot, t)}{1 - t}, \quad \forall t \in (0, 1),  \]
which yields the upper bound
\[ u(\cdot, t) < (1 - t) u(\cdot, 0) + t u(\cdot, 1) = (1 - t) u_0 + t u_1 \quad \forall t \in (0, 1). \]

To find a lower bound of $u$, we consider
\[ \Psi = u + a t (1 - t), \quad t \in [0, 1], \]
where $a$ is a positive constant to be determined.
Assume that $\Psi$ attains an interior minimum at $(x_0, t_0)$. Thus, at $(x_0, t_0)$, we have
\[ \nabla u = 0, \quad \nabla^2 u \geq 0.  \]
In addition,
\begin{equation} \label{eq2-3}
\begin{aligned}
 \mathcal{L} (\Psi) = & (1 - k) u_{tt}^{- k} \sigma_k (E_u) \Psi_{tt} \\
& + u_{tt}^{1 - k} \sigma_{k}^{ij} (E_u) \Big( \Psi_{tt} W_{ij}[u] + u_{tt} \mathcal{M}_{ij} (\Psi) - u_{t i}  \Psi_{tj} - \Psi_{ti} u_{tj} \Big) \\
= & (1 - k) u_{tt}^{- k} \sigma_k (E_u) (u_{tt} - 2 a) \\
& + u_{tt}^{1 - k} \sigma_{k}^{ij} (E_u) \Big( (u_{tt} - 2 a) W_{ij}[u] + u_{tt} \mathcal{M}_{ij} (u) - 2 u_{ti} u_{tj} \Big) \\
= & (1 + k) u_{tt}^{1 - k} \sigma_k (E_u) - 2 a u_{tt}^{- k} \sigma_k (E_u) - u_{tt}^{2 - k} \sigma_k^{ij} (E_u) A_{ij} \\
& - 2 a u_{tt}^{- k} \sigma_k^{ij} (E_u) u_{t i} u_{t j},
\end{aligned}
\end{equation}
where
\[ W_{ij}[u] = \nabla_{ij} u + \gamma \Delta u  \delta_{ij} + A_{ij} , \quad  (E_u)_{ij} = u_{tt} W_{ij}[u] - u_{t i} u_{t j}, \]
\[ \mathcal{M}_{ij} (u) =  \nabla_{ij} u + \gamma \Delta u \delta_{ij}. \]

Since $\Psi$ attains an interior minimum at $(x_0, t_0)$, we know that at $(x_0, t_0)$,
\[ \Psi_{tt} \nabla^2 \Psi - d \Psi_t \otimes d \Psi_t \geq 0. \]
One may see \cite{Gursky-Streets2018} Page 3528--3529 for a proof.
The above inequality means that
\[ (u_{tt} - 2 a ) \nabla^2 u - d u_t \otimes d u_t \geq 0.  \]
It follows that
\[  u_{tt} \nabla^2 u - d u_t \otimes d u_t \geq 0.  \]
Consequently,
\[ E_u = u_{tt} \nabla^2 u + u_{tt} \gamma \Delta u g + u_{tt} A - d u_t \otimes d u_t \geq u_{tt} A, \]
if we assume that $\gamma \geq 0$.
Since we have assumed that $\lambda(A) \in \Gamma_k$, by the concavity of $\sigma_k^{\frac{1}{k}}$ we have
\begin{equation} \label{eq2-1}
 \sigma_k^{ij} (E_u) A_{ij} \geq k \sigma_k^{1 - \frac{1}{k}} (E_u) \sigma_k^{\frac{1}{k}} (A) > 0.
\end{equation}

Also, by Proposition \ref{prop4}, we have
\begin{equation} \label{eq2-2}
\begin{aligned}
 & u_{tt}^{- k} \sigma_k (E_u) +  u_{tt}^{- k} \sigma_k^{ij} (E_u) u_{t i} u_{t j}  \\
= &  u_{tt}^{- k} \sigma_k \big( u_{tt} W[u] -  d u_t \otimes d u_t \big) +  u_{tt}^{- k} \sigma_k^{ij} \big( u_{tt} W[u] - d u_t \otimes d u_t \big) u_{t i} u_{t j} \\
= &  u_{tt}^{- k} \Big( \sigma_k \big( u_{tt} W[u] \big) - \sigma_{k}^{ij} \big( u_{tt} W[u] \big) u_{ti} u_{tj} \Big) +  u_{tt}^{- k} \sigma_k^{ij} \big( u_{tt} W[u] \big) u_{t i} u_{t j} \\
= & \sigma_k \big( W[u] \big).
\end{aligned}
\end{equation}

Taking \eqref{eq2-1} and \eqref{eq2-2} into \eqref{eq2-3}, we can see that at $(x_0, t_0)$,
\[ \begin{aligned}
\mathcal{L} (\Psi) < & (1 + k) u_{tt}^{1 - k} \sigma_k ( E_u ) - 2 a \sigma_k \big( W[u] \big) \\
\leq &  (1 + k) \psi (x, t) - 2 a \sigma_k ( A ).
\end{aligned} \]
By taking $a$ sufficiently large, depending on the upper bound of $\psi$, we can make
\[ \mathcal{L} (\Psi) < 0 \quad \text{  at  } (x_0, t_0). \]
But this is impossible. Hence $\Psi$ can not have an interior minimum. We therefore obtain that
\[ u \geq \min\Big\{ \min\limits_{M}{u_0}, \min\limits_M {u_1} \Big\} - \frac{a}{4}. \]
\end{proof}

\vspace{2mm}

\subsection{Estimate for $u_t$}~

\vspace{2mm}

We shall adopt the idea of Gursky-Streets \cite{Gursky-Streets2018} to derive the bound of $u_t$.

\begin{prop} \label{Prop2}
Let $u$ be an admissible solution to \eqref{eq1}--\eqref{eq1-4}. Then we have
\[ \max\limits_{M \times [0, 1]} |u_t| \leq C, \]
where $C$ is a positive constant depending only on the upper bound of $\psi$, the bound of $|u_0 - u_1|$,  and the positive lower bound of $\sigma_k \big( W[u_0] \big)$, $\sigma_k \big( W[u_1] \big)$.
\end{prop}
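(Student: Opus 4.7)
The plan is to use the $t$-convexity of $u$ (which holds since $u_{tt}>0$) to reduce the global bound on $|u_t|$ to bounds at the two boundary slices $t=0$ and $t=1$, and then to produce those boundary bounds by comparing $u$ with one-endpoint barriers built from $u_0$ and $u_1$.

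First, since $u_{tt}>0$, $u_t(x,\cdot)$ is strictly increasing in $t$, so it suffices to bound $u_t(x,0)$ from below and $u_t(x,1)$ from above. For the lower bound at $t=0$ I would try the barrier
\[
\phi_-(x,t)=u_0(x)+\tfrac{M}{2}\,t^2+\Bigl(\min_M(u_1-u_0)-\tfrac{M}{2}\Bigr)t,
\]
which matches $u$ at $t=0$ and satisfies $\phi_-(\cdot,1)\leq u_1$ pointwise. Because its $t$-dependence is a pure scalar function, $W[\phi_-]\equiv W[u_0]$, and via \eqref{eq4},
\[
F[\phi_-]\;:=\;(\phi_-)_{tt}^{1-k}\,\sigma_k(E_{\phi_-})\;=\;M\,\sigma_k\bigl(W[u_0]\bigr),
\]
which dominates $\psi$ once $M$ is chosen large enough in terms of $\sup\psi$ and the positive lower bound of $\sigma_k(W[u_0])$; admissibility of $\phi_-$ is then immediate from $M>0$ and $\lambda(W[u_0])\in\Gamma_k$. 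A symmetric barrier $\phi_+(x,t)=u_1(x)+\tfrac{M}{2}(t-1)^2+\bigl(\tfrac{M}{2}+\max_M(u_1-u_0)\bigr)(t-1)$ handles the upper bound at $t=1$.

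The crux is then to conclude $\phi_\pm\leq u$ on $M\times[0,1]$ from $F[\phi_\pm]\geq \psi=F[u]$ and $\phi_\pm\leq u$ on $M\times\{0,1\}$. I would argue by contradiction at a putative positive interior maximum $(x_0,t_0)$ of $w=\phi_\pm-u$: the jets of $u$ and $\phi_\pm$ both lie in the convex cone $\mathcal{S}$ (Proposition \ref{prop5}), so concavity of $F^{1/(k+1)}$ on $\mathcal{S}$ yields $\mathcal{L}[w](x_0,t_0)\geq 0$ for the linearization $\mathcal{L}$ about $u$; on the other hand $Dw(x_0,t_0)=0$, $D^2w(x_0,t_0)\leq 0$, and the ellipticity of $\mathcal{L}$ (Proposition \ref{Prop7}) together with the fact that $\mathcal{L}$ has no zero-order term in $v$ force $\mathcal{L}[w](x_0,t_0)\leq 0$, so equality holds and the strong maximum principle contradicts $\phi_\pm\leq u$ on the boundary. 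Once $\phi_\pm\leq u$ is established, the matched-endpoint derivatives give $u_t(x,0)\geq\min_M(u_1-u_0)-M/2$ and $u_t(x,1)\leq\tfrac{M}{2}+\max_M(u_1-u_0)$, and monotonicity of $u_t$ in $t$ yields the asserted uniform bound.

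I expect the main technical obstacle to be in the comparison step, specifically in checking that the linearization $\mathcal{L}$ has a strictly positive leading $v_{tt}$-coefficient (so that the strong maximum principle applies in standard form). A direct computation using $\sigma_k^{ij}(E_u)(E_u)_{ij}=k\sigma_k(E_u)$ and the relation $u_{tt}W_{ij}[u]=(E_u)_{ij}+u_{ti}u_{tj}$ reduces this coefficient to $u_{tt}^{-k}\bigl[\sigma_k(E_u)+\sigma_k^{ij}(E_u)u_{ti}u_{tj}\bigr]$, which is positive on admissible solutions by Proposition \ref{prop6}; the rest of the argument then proceeds along standard lines.
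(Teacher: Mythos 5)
Your barrier construction is clean and correct: $\phi_-$ matches $u$ at $t=0$, satisfies $\phi_-(\cdot,1)\leq u_1$, has $W[\phi_-]\equiv W[u_0]$ (since the $t$-dependence is a pure scalar), and hence $F[\phi_-]=M\,\sigma_k\big(W[u_0]\big)$, so $R_{\phi_-}\in\mathcal S$ and $F[\phi_-]\geq\psi$ once $M\geq\sup\psi/\inf_M\sigma_k\big(W[u_0]\big)$. Your route is genuinely different from the paper's: you invoke the concavity of $\ln F_k$ on $\mathcal S$ (Proposition \ref{prop5}), whereas the paper's proof of Proposition \ref{Prop2} never touches Proposition \ref{prop5}. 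Instead it computes $\mathcal L(\Phi)$ for $\Phi=u-u_0-bt^2+ct$, uses the Hessian constraint at an interior minimum to obtain $E_u\geq u_{tt}W[u_0]$, and then applies the concavity of $\sigma_k^{1/k}$ to get $\sigma_k^{ij}(E_u)W_{ij}[u_0]>0$. Your approach trades that explicit computation for a more structural comparison-principle argument, at the cost of a more delicate justification of the contradiction.

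The gap is in the final step, where you invoke the strong maximum principle. Concavity of $G=\ln F_k$ gives $G^{IJ}(R_u)\big(R_{\phi_-}-R_u\big)_{IJ}\geq G(R_{\phi_-})-G(R_u)\geq 0$ everywhere, but $G^{IJ}(R_u)\big(R_{\phi_-}-R_u\big)_{IJ}$ coincides with $\mathbb L(w)$ only where $\nabla w=0$; away from the max the two differ by $s\,G^{ij}w_iw_j-\tfrac{r}{2}|\nabla w|^2\operatorname{tr}G$, which has no controlled sign for general $s,r\in\mathbb R$. So from the argument as given you only obtain $\mathbb L(w)(x_0,t_0)\geq 0$ at the single point $(x_0,t_0)$, and combined with $\mathbb L(w)(x_0,t_0)\leq 0$ from $D^2w\leq 0$ and the positive semidefiniteness of the coefficient matrix (Proposition \ref{Prop7}), this yields $\mathbb L(w)(x_0,t_0)=0$ — not yet a contradiction. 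A one-point equality does not trigger the strong maximum principle; one would need the differential inequality in a full neighborhood, which requires either controlling those quadratic gradient terms or invoking a quasilinear version of Hopf's theorem whose hypotheses you would still need to verify. The simplest repair is to avoid the strong maximum principle entirely: take $M$ strictly larger, say $M>\sup\psi/\inf_M\sigma_k\big(W[u_0]\big)$, so that $G(R_{\phi_-})-G(R_u)\geq\ln\big(M\inf_M\sigma_k(W[u_0])\big)-\ln\sup\psi>0$ uniformly (and still independently of $\inf\psi$). Then at an interior maximum of $w$ you get $\mathbb L(w)(x_0,t_0)>0$ while the second-order test gives $\mathbb L(w)(x_0,t_0)\leq 0$, a direct contradiction. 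With this change your argument goes through and yields the same boundary derivative bounds and the same final conclusion via the monotonicity of $u_t$ in $t$.
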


\begin{proof}
Since $u_{tt} > 0$, we have
\[ u_t(\cdot, 0) \leq u_t (\cdot, t) \leq u_t (\cdot, 1), \quad \forall \,  t \in [0, 1]. \]
Hence it suffices to give a lower bound for $u_t(\cdot, 0)$ and an upper bound for $u_t (\cdot, 1)$.

First, we give a lower bound for $u_t(\cdot, 0)$. Consider the test function
\[ \Phi (x, t) = u(x, t) - u_0 (x) - b t^2 + c t,  \]
where $b$ and $c$ are positive constants to be determined.

Assume that $\Phi$ attains an interior minimum at $(x_0, t_0) \in M \times (0, 1)$. Then, at $(x_0, t_0)$, we have
\[ \nabla u = \nabla u_0, \quad \nabla^2 u \geq \nabla^2 u_0,  \]
\begin{equation} \label{eq2-4}
\begin{aligned}
& \mathcal{L} (\Phi)
=  (1 - k) u_{tt}^{- k} \sigma_k (E_u) (u_{tt} - 2 b) \\
& + u_{tt}^{1 - k} \sigma_{k}^{ij} (E_u) \Big( (u_{tt} - 2 b) W_{ij}[u] + u_{tt} \big( W_{ij} [u] - W_{ij} [u_0] \big) - 2 u_{ti} u_{tj} \Big) \\
= & (1 + k) u_{tt}^{1 - k} \sigma_k (E_u) - 2 b (1 - k) u_{tt}^{- k} \sigma_k (E_u)
 - 2 b u_{tt}^{1 - k} \sigma_k^{ij} (E_u) W_{ij} [u] \\
& - u_{tt}^{2 - k} \sigma_k^{ij} (E_u) W_{ij}[u_0] \\
= &  (1 + k) u_{tt}^{1 - k} \sigma_k (E_u) - 2 b \sigma_k ( W[u] ) - u_{tt}^{2 - k} \sigma_k^{ij} (E_u) W_{ij}[u_0] ,
\end{aligned}
\end{equation}
where
\[  (E_u)_{ij} = u_{tt} W_{ij}[u] - u_{t i} u_{t j},\]
and the last inequality is derived in the same way as Proposition \ref{Prop1}.

Since $\Phi$ attains an interior minimum at $(x_0, t_0)$, we know that at $(x_0, t_0)$,
\[ \Phi_{tt} \nabla^2 \Phi - d \Phi_t \otimes d \Phi_t \geq 0, \]
or equivalently,
\[ (u_{tt} - 2 b ) \nabla^2 (u - u_0) - d u_t \otimes d u_t \geq 0.  \]
It follows that
\[  u_{tt} \nabla^2 ( u - u_0 ) - d u_t \otimes d u_t \geq 0.  \]
Consequently,
\[ \begin{aligned}
E_u = & u_{tt} \bigg( \nabla^2 u + s du \otimes du + \Big( \gamma \Delta u - \frac{r}{2} |\nabla u|^2 \Big) g + A \bigg) - d u_t \otimes d u_t \\
\geq & u_{tt}  \bigg( \nabla^2 u_0 + s du_0 \otimes du_0 + \Big( \gamma \Delta u_0 - \frac{r}{2} |\nabla u_0|^2 \Big) g + A \bigg) \\
= & u_{tt} W[u_0].
\end{aligned} \]
Since we have assumed that $\lambda\big( W[u_0] \big) \in \Gamma_k$, by the concavity of $\sigma_k^{\frac{1}{k}}$ we have
\begin{equation*}
 \sigma_k^{ij} (E_u) W_{ij} [u_0] \geq k \sigma_k^{1 - \frac{1}{k}} (E_u) \sigma_k^{\frac{1}{k}} \big( W[u_0] \big) > 0.
\end{equation*}

Also, it is straightforward to see that at $(x_0, t_0)$,
\[ W[u] \geq W[u_0].  \]
Therefore, we conclude that at $(x_0, t_0)$,
\[ \begin{aligned}
\mathcal{L} (\Phi) < & (1 + k) \psi(x, t) - 2 b \sigma_k \big( W[u_0] \big).
\end{aligned} \]
By taking $b$ sufficiently large, depending on the upper bound of $\psi$ and the positive lower bound of $\sigma_k \big( W [u_0] \big)$, we can make
\[ \mathcal{L} (\Phi) < 0 \quad \text{  at  } (x_0, t_0). \]
But this is impossible. Hence $\Phi$ can not attain an interior minimum.

Now we choose $c$ sufficiently large depending in addition on the lower bound of $u_1 - u_0$ such that $\Phi(\cdot, 1) \geq 0 = \Phi(\cdot, 0)$. Then we conclude that $\Phi_t (x, 0) \geq 0$ for all $x \in M$, for otherwise $\Phi$ would attain an interior minimum. Hence we obtain $u_t (\cdot, 0) \geq - c$.

Next, we give an upper bound for $u_t(\cdot, 1)$. Consider the test function
\[ \Theta (x, t) = u(x, t) - u_1 (x) - e t^2,  \]
where $e$ is a positive constant to be determined.

Similar as the previous argument, by taking $e$ sufficiently large, depending on the upper bound of $\psi$ and the positive lower bound of $\sigma_k \big( W [u_1] \big)$, we can prove that $\Theta$ can not attain an interior minimum.
Now we choose $e$ further large depending in addition on the upper bound of $u_1 - u_0$ such that $\Theta(x, 1) = - e \leq u_0(x) - u_1 (x) = \Theta(x, 0)$. Then we conclude that $\Theta_t (x, 1) \leq 0$ for all $x \in M$. Hence we obtain $u_t (\cdot, 1) \leq 2 e$.

\end{proof}

\vspace{4mm}

\section{Gradient estimate}~

\vspace{4mm}

\begin{thm} \label{Thm5}
Let $u$ be an admissible solution to \eqref{eq1}--\eqref{eq1-4}. If $r \neq 0$ or $\gamma > 0$, then we have
\[ \max\limits_{M \times [0, 1]} |\nabla u| \leq C, \]
where $C$ is a positive constant depending on $n$, $k$, $\sup\psi$, $\sup \big| \nabla ( \psi^{\frac{1}{k + 1}} ) \big|$, $g$, $|u_0|_{C^1}$, $|u_1|_{C^1}$.
\end{thm}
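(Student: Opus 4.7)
The plan is a standard maximum principle argument applied to a barrier built from $|\nabla u|$. The $C^0$ bound for $u$ and the bound for $u_t$ are already in hand (Propositions~\ref{Prop1}--\ref{Prop2}), and $|\nabla u|$ is controlled on the two boundary slices by $|u_0|_{C^1}$ and $|u_1|_{C^1}$, so it suffices to bound $|\nabla u|$ at any interior maximum of a test function of the form
\[ \varphi = \log(|\nabla u|^2+1) + \alpha\,\eta(u) + \beta\,t(1-t), \]
where $\alpha,\beta>0$ will be chosen large, and $\eta$ is a scalar function of $u$ chosen so that the linearized operator $\mathcal{L}$ applied to $\alpha\,\eta(u)$ generates a positive term large enough to dominate.

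At an interior maximum $(x_0,t_0)\in M\times(0,1)$ the first-order conditions $\nabla\varphi=0$, $\varphi_t=0$ hold and the full $(n+1)\times(n+1)$ Hessian of $\varphi$ is negative semi-definite. The linearized operator $\mathcal{L}$ defined before Proposition~\ref{Prop7} has no zeroth-order piece, and the only first-order contributions come from $s\,du\otimes d\varphi$, $-r\langle\nabla u,\nabla\varphi\rangle$, and the cross terms $u_{ti}\varphi_{tj}$; all of these vanish at the critical point, so ellipticity (Proposition~\ref{Prop7}) yields $\mathcal{L}(\varphi)\le 0$ at $(x_0,t_0)$. In parallel I would differentiate the equation in logarithmic form $\log\psi=(1-k)\log u_{tt}+\log\sigma_k(E_u)$ in a spatial direction and contract with $\nabla u$, producing an identity that expresses the third-derivative contribution to $\mathcal{L}(|\nabla u|^2)$ purely in terms of second derivatives of $u$ together with $\nabla(\psi^{1/(k+1)})$; this is how $\sup|\nabla(\psi^{1/(k+1)})|$ enters.

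Rotating so that $\nabla u=|\nabla u|\,e_1$ at $(x_0,t_0)$, expansion of $\mathcal{L}(\varphi)$ yields a sum of non-negative quadratic forms in the second derivatives of $u$, plus a single bad term proportional to $|\nabla u|^2$, together with the contribution from $\mathcal{L}(\alpha\eta(u))$. When $\gamma>0$ the good term is delivered by the coefficient $u_{tt}(n-k+1)\sigma_{k-1}(E_u)\gamma$ multiplying $\Delta\varphi$: the positive piece $\gamma\,\Delta\log(|\nabla u|^2+1)\sim|\nabla^2 u|^2/(|\nabla u|^2+1)$ absorbs every cross term, and choosing $\alpha$ large (depending only on the listed data) closes the argument. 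When $\gamma=0$ but $r\neq 0$, the admissibility $\lambda(W[u])\in\Gamma_k$ forces $\sigma_1(W[u])>0$, which translates into an a priori relation between $\Delta u$ and $|\nabla u|^2$; feeding this relation into the $\sigma_k^{ij}(E_u)W_{ij}[u]$ coefficient that multiplies $\varphi_{tt}$ and into the $\mathcal{L}(\alpha\eta(u))$ term with an $\eta$ whose sign is adapted to $\operatorname{sgn}(r)$ produces the missing positive term of size $\alpha|\nabla u|^2$ and closes the argument by contradiction.

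The main obstacle I anticipate is the regime $\gamma=0,\ r\neq 0$: the linearization's gradient-sensitive piece $-r\langle\nabla u,\nabla\varphi\rangle$ vanishes automatically at a critical point of $\varphi$, so the needed gain cannot be obtained from $\mathcal{L}$ alone and must be recovered indirectly from the nonlinear cone condition. Balancing this indirect gain against the commutator errors coming from the curvature of $(M,g)$ and from the $s\,du\otimes du$ piece of $W[u]$, and ensuring that a single choice of $\eta,\alpha,\beta$ works uniformly over the sub-cases $\gamma>0$ and $r\neq 0$ for either sign of $r$, is where the delicate bookkeeping lies.
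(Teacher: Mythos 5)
Your high-level architecture — a test function combining $|\nabla u|^2$ (or its log), a barrier in $u$, and a barrier in $t$, then applying the maximum principle at an interior critical point and comparing with a differentiated form of the equation — is exactly the paper's strategy. The substantive issues lie in your account of \emph{where the decisive positive $|\nabla u|^2$ term comes from}, and they are serious enough that the plan as stated would not close.

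In the $\gamma>0$ case you attribute the gain to $\gamma\Delta\log(|\nabla u|^2+1)\sim |\nabla^2 u|^2/(|\nabla u|^2+1)$. That term does not dominate the leading bad term of size $|\nabla u|^2\cdot u_{tt}^{2-k}\sigma_{k-1}(E_u)$ coming from curvature commutators (and from $A_{ij,l}$); it is not even of the right order in $|\nabla u|$, since you have no second-derivative control at this stage. The actual source of the good term is the barrier $\eta(u)$: expanding $\mathcal{L}(\alpha\eta(u))$ one picks up $\gamma\,\alpha\,\eta''|\nabla u|^2\,(n-k+1)\sigma_{k-1}(E_u)\,u_{tt}^{2-k}$ from the $\gamma|\nabla\eta(u)|^2\delta_{ij}$ piece of $\mathcal{M}_{ij}(\eta(u))$. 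In the paper's notation this is the term $e^{-\lambda_2 u}\lambda_2^2\gamma(n-k+1)u_{tt}^{2-k}|\nabla u|^2\sigma_{k-1}(E_u)$ in \eqref{eq2-22}; taking $\lambda_2$ (your $\alpha$) large is what dominates.

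In the $\gamma=0,\,r\neq 0$ case your diagnosis that the needed gain ``cannot be obtained from $\mathcal{L}$ alone and must be recovered indirectly from the nonlinear cone condition'' is not correct, and it is not what the paper does. You are right that the $-r\langle\nabla u,\nabla\varphi\rangle$ piece of the linearization applied to the test function vanishes at the critical point. But $\mathcal{L}(\alpha\eta(u))$ contains $\alpha\eta'\,\mathcal{L}(u)$, and $\mathcal{L}(u)$ — the linearization applied to $u$ itself, not to $\varphi$ — carries the term $-\tfrac{r}{2}|\nabla u|^2\,u_{tt}^{2-k}(n-k+1)\sigma_{k-1}(E_u)$ inherited from the explicit $-\tfrac{r}{2}|\nabla u|^2 g$ piece of $W[u]$ (see \eqref{eq2-18}). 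With $\eta'$ chosen with sign opposite to $r$ (and $u$ pre-shifted by an affine function of $t$ to fix signs), this gives a good term of size $|\alpha\eta'|\cdot|r|\cdot|\nabla u|^2\cdot u_{tt}^{2-k}\sigma_{k-1}(E_u)$ that does \emph{not} vanish at the critical point of $\varphi$. No appeal to $\sigma_1(W[u])>0$ or to the $\sigma_k^{ij}(E_u)W_{ij}[u]$ coefficient is needed.

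Finally, you never explain how the constant avoids depending on $\inf\psi$, even though that is what makes $\sup|\nabla(\psi^{1/(k+1)})|$ appear. The differentiated equation produces $\nabla\psi$, not $\nabla(\psi^{1/(k+1)})$. The conversion requires balancing: the paper pairs the good term $k\,u_{tt}^{2-k}\sigma_{k-1}(E_u)|\nabla u|^2$ with the term $\psi\,u_{tt}^{-1}u_t^2$ (produced by $\eta''$ together with the equation) via the arithmetic--geometric mean and Newton--Maclaurin inequalities, yielding $C(n,k)\psi^{k/(k+1)}|\nabla u|^{2k/(k+1)}|u_t|^{2/(k+1)}$, which then absorbs $|\nabla\psi|\,|\nabla u|^{2k/(k+1)}$ with a constant depending on $\sup|\nabla(\psi^{1/(k+1)})|$ alone. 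Without this balancing step the degeneracy as $\psi\to 0$ is not controlled.
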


\begin{proof}
Let $u \in C^3 \big( M \times (0, 1) \big) \cap C^1 \big( M \times [0, 1] \big)$ be an admissible solution of \eqref{eq1}.
We consider the test function
\[  \Phi_1 = |\nabla u|^2 + e^{- \lambda_2 u} + \lambda_3 t (t - 1), \]
where $\lambda_2$ and $\lambda_3$ are constants to be determined.

By direct calculation, we have
\[ \begin{aligned}
\mathcal{M}_{ij} \big( |\nabla u|^2 \big) = 2 u_l \mathcal{M}_{ij} (u_l) + 2 u_{li} u_{lj} + 2 \gamma |\nabla^2 u|^2 \delta_{ij},
\end{aligned} \]
and thus
\begin{equation} \label{eq2-12}
\begin{aligned}
\mathcal{L} \big( |\nabla u|^2 \big) = & 2 u_l \mathcal{L}(u_l) + 2 (1 - k) u_{tt}^{- k} \sigma_k (E_u) |\nabla u_t|^2 \\
& + 2 u_{tt}^{1 - k} \sigma_k^{ij}(E_u) \Big( |\nabla u_t|^2 W_{ij}[u] +  u_{tt} u_{l i} u_{l j} +  \gamma u_{tt} |\nabla^2 u|^2 \delta_{ij} \\
& - u_{ti} u_{lj} u_{lt} - u_{li} u_{lt} u_{tj} - \frac{u_{ti} u_{tj} |\nabla u_t|^2}{u_{tt}} + \frac{u_{ti} u_{tj} |\nabla u_t|^2}{u_{tt}}  \Big) \\
= &  2 u_l \mathcal{L}(u_l) + 2 u_{tt}^{- k} \sigma_k (E_u) |\nabla u_t|^2 \\
& + 2 u_{tt}^{1 - k} \sigma_k^{ij}(E_u) \Big( u_{tt} u_{l i} u_{l j} +  \gamma u_{tt} |\nabla^2 u|^2 \delta_{ij} \\
& - u_{ti} u_{lj} u_{lt} - u_{li} u_{lt} u_{tj} + \frac{u_{ti} u_{tj} |\nabla u_t|^2}{u_{tt}}  \Big),
\end{aligned}
\end{equation}
where $|\nabla^2 u|^2 = \sum\limits_{l m} u_{lm}^2$.

Also, we compute
%\[ (E_u)_{ij} = u_{tt} \Big( u_{ij} + s u_i u_j + \big( \gamma \Delta u - \frac{r}{2} |\nabla u|^2 \big) %\delta_{ij} + A_{ij} \Big) - u_{ti} u_{tj}, \]
\[\begin{aligned}
(E_u)_{ij, l} = & u_{ttl} W_{ij}[u] + u_{tt} \Big( u_{ijl} + s u_{il} u_j + s u_i u_{jl} \\
& + \big( \gamma (\Delta u)_l - r u_m u_{ml} \big) \delta_{ij} + A_{ij, l} \Big)
  - u_{t il} u_{tj} - u_{ti} u_{tjl}.
\end{aligned} \]
Now we differentiate \eqref{eq2} to obtain
\begin{equation} \label{eq2-13}
\begin{aligned}
& (1 - k) u_{tt}^{- k} u_{ttl} \sigma_k (E_u) + u_{tt}^{1 - k} \sigma_k^{ij}(E_u) \bigg( u_{ttl} W_{ij} [u] \\ & + u_{tt} \Big( u_{ijl} + s u_{il} u_j + s u_i u_{jl}
 + \big( \gamma (\Delta u)_l - r u_m u_{ml} \big) \delta_{ij} + A_{ij, l}  \Big) \\
& - u_{til} u_{tj} - u_{ti} u_{tjl} \bigg) = \psi_l.
\end{aligned}
\end{equation}
Comparing with
\[\begin{aligned}
& \mathcal{L}(u_l) = (1 - k) u_{tt}^{- k} \sigma_k (E_u) u_{ltt} \\
& + u_{tt}^{1 - k} \sigma_k^{ij} (E_u) \Big( u_{ltt} W_{ij}[u] + u_{tt} \mathcal{M}_{ij} (u_l) - u_{ti} u_{ltj} - u_{lti} u_{tj} \Big),
\end{aligned} \]
where
\[ \mathcal{M}_{ij} (u_l) = u_{lij} + s u_i u_{lj} + s u_{li} u_j + \Big( \gamma \Delta (u_l) - r \langle \nabla u, \nabla(u_l) \rangle \Big) \delta_{ij}, \]
and in view of
\begin{equation} \label{eq2-28}
\nabla_{ijl} u = \nabla_{lij} u + R_{lij}^m \nabla_m u,
\end{equation}
and
\begin{equation} \label{eq2-29}
\nabla_l \Delta u = \Delta \nabla_l u - \sum\limits_m R_{lmm}^s \nabla_s u,
\end{equation}
equation \eqref{eq2-13} becomes
\begin{equation} \label{eq2-14}
\mathcal{L}(u_l) = \psi_l + u_{tt}^{2 - k} \sigma_k^{ij} (E_u) (R_{lji}^m u_m + \gamma R_{lmm}^s u_s \delta_{ij} - A_{ij, l}).
\end{equation}

Also, by Cauchy-Schwartz inequality, the term in \eqref{eq2-12} can be estimated as
\begin{equation} \label{eq2-23}
\begin{aligned}
& 2 u_{tt}^{1 - k} \sigma_k^{ij}(E_u) \Big( u_{tt} u_{l i} u_{l j} - u_{ti} u_{lj} u_{lt} - u_{li} u_{lt} u_{tj}  + \frac{u_{ti} u_{tj} |\nabla u_t|^2}{u_{tt}}  \Big) \\
\geq &  2 u_{tt}^{1 - k} \sigma_k^{ij}(E_u) \Big( u_{tt} u_{l i} u_{l j} + \frac{u_{ti} u_{tj} |\nabla u_t|^2}{u_{tt}}  \Big) - 2 u_{tt}^{- k} \sigma_k^{ij}(E_u) u_{ti} u_{lt}^2 u_{tj} \\
& - 2 u_{tt}^{2 - k} \sigma_k^{ij}(E_u) u_{li} u_{lj}
= 0 .
\end{aligned}
\end{equation}
Taking \eqref{eq2-23} and \eqref{eq2-14} into \eqref{eq2-12}, we obtain
\begin{equation} \label{eq2-24}
\begin{aligned}
\mathcal{L} \big( |\nabla u|^2 \big)
\geq &  2 u_l \mathcal{L}(u_l) +   2 u_{tt}^{- k} \sigma_k (E_u) |\nabla u_t|^2 \\
= &  2 u_l  \psi_l  +  2 u_l  u_{tt}^{2 - k} \sigma_k^{ij} (E_u) (R_{lji}^m u_m \\
& + \gamma R_{lmm}^s u_s \delta_{ij} - A_{ij, l}) + 2 u_{tt}^{- k} \sigma_k (E_u) |\nabla u_t|^2 .
\end{aligned}
\end{equation}

Next, we compute
\[ \begin{aligned}
\mathcal{M}_{ij} (e^{- \lambda_2 u}) = & e^{- \lambda_2 u} \Big( - \lambda_2 u_{ij} + \lambda_2^2 u_i u_j - 2 \lambda_2 s u_i u_j \\
& + \big( - \gamma \lambda_2 \Delta u + \gamma \lambda_2^2 |\nabla u|^2 + r \lambda_2 |\nabla u|^2 \big) \delta_{ij} \Big).
\end{aligned} \]

\begin{equation} \label{eq2-16}
\begin{aligned}
\mathcal{L}(e^{- \lambda_2 u}) = & - \lambda_2 e^{- \lambda_2 u} \mathcal{L}(u) + (1 - k) u_{tt}^{- k} \sigma_k (E_u) e^{- \lambda_2 u} \lambda_2^2 u_t^2 \\
& + u_{tt}^{1 - k} \sigma_k^{ij} (E_u) e^{- \lambda_2 u} \lambda_2^2 \Big( u_t^2 W_{ij}[u] - u_{tt}^{- 1} u_t^2 u_{ti} u_{tj} + u_i u_j u_{tt} \\
& + \gamma |\nabla u|^2 u_{tt} \delta_{ij} - u_{ti} u_j u_t - u_i u_t u_{tj} + u_{tt}^{- 1} u_t^2 u_{ti} u_{tj} \Big) \\
= &  - \lambda_2 e^{- \lambda_2 u} \mathcal{L}(u) + u_{tt}^{- k} \sigma_k (E_u) e^{- \lambda_2 u} \lambda_2^2 u_t^2 \\
& + u_{tt}^{1 - k} \sigma_k^{ij} (E_u) e^{- \lambda_2 u} \lambda_2^2 \Big(  u_i u_j u_{tt}
 + \gamma |\nabla u|^2 u_{tt} \delta_{ij} \\
& - u_{ti} u_j u_t - u_i u_t u_{tj} + u_{tt}^{- 1} u_t^2 u_{ti} u_{tj} \Big).
\end{aligned}
\end{equation}
Also, we can compute
\[ \mathcal{M}_{ij} (u) = u_{ij} + 2 s u_i u_j + \big( \gamma \Delta u - r |\nabla u|^2 \big) \delta_{ij}. \]
\begin{equation} \label{eq2-18}
\begin{aligned}
\mathcal{L}(u) = & (1 - k) u_{tt}^{1 - k} \sigma_k (E_u) + u_{tt}^{1 - k} \sigma_k^{ij} (E_u) \Big( u_{tt} W_{ij} [u] \\
& + u_{tt} \mathcal{M}_{ij}(u) - 2 u_{ti} u_{tj} \Big)  \\
= &  (1 - k) u_{tt}^{1 - k} \sigma_k (E_u) + u_{tt}^{1 - k} \sigma_k^{ij} (E_u) \Big( 2 u_{tt} W_{ij} [u] - u_{tt} A_{ij} \\
& + s u_{tt} u_i u_j - \frac{r}{2} |\nabla u|^2 u_{tt} \delta_{ij} - 2 u_{ti} u_{tj} \Big) \\
= & (1 + k) \psi - u_{tt}^{2 - k} \sigma_k^{ij} (E_u) A_{ij} + s u_{tt}^{2 - k} \sigma_k^{ij} (E_u) u_i u_j \\
& - \frac{r}{2} |\nabla u|^2 u_{tt}^{2 - k} (n - k + 1) \sigma_{k - 1} (E_u).
\end{aligned}
\end{equation}
By Cauchy-Schwartz inequality, the term in \eqref{eq2-16} can be estimated as
\begin{equation} \label{eq2-21}
\begin{aligned}
& u_{tt}^{1 - k} \sigma_k^{ij} (E_u) \Big(  u_i u_j u_{tt}
 - u_{ti} u_j u_t - u_i u_t u_{tj} + u_{tt}^{- 1} u_t^2 u_{ti} u_{tj} \Big) \\
\geq &  u_{tt}^{1 - k} \sigma_k^{ij} (E_u) \Big(  u_i u_j u_{tt}
 - \frac{1}{2} u_i u_j u_{tt} - 2 u_{tt}^{- 1} u_t^2 u_{ti} u_{tj} + u_{tt}^{- 1} u_t^2 u_{ti} u_{tj} \Big) \\
= &   u_{tt}^{1 - k} \sigma_k^{ij} (E_u) \Big( \frac{1}{2} u_i u_j u_{tt}
 - u_{tt}^{- 1} u_t^2 u_{ti} u_{tj} \Big) \\
= &    u_{tt}^{1 - k} \Big( \frac{1}{2} \sigma_k^{ij} (E_u) u_i u_j u_{tt}
 - u_{tt}^{- 1} \sigma_k^{ij} \big(  u_{tt} W[u] - d u_t \otimes d u_t \big)  u_t^2 u_{ti} u_{tj} \Big) \\
= &  \frac{1}{2} u_{tt}^{1 - k} \sigma_k^{ij} (E_u) u_i u_j u_{tt}
 - u_{tt}^{- 1} \sigma_k^{ij} \big( W[u] \big)  u_t^2 u_{ti} u_{tj} \\
= &   \frac{1}{2} u_{tt}^{2 - k} \sigma_k^{ij} (E_u) u_i u_j
 - \sigma_k \big( W[u] \big)  u_t^2 + u_{tt}^{- k} u_t^2 \sigma_k (E_u).
\end{aligned}
\end{equation}
Taking \eqref{eq2-21} and \eqref{eq2-18} into \eqref{eq2-16} yields
\begin{equation} \label{eq2-22}
\begin{aligned}
& \mathcal{L}(e^{- \lambda_2 u}) \\
\geq &  - \lambda_2 e^{- \lambda_2 u} \mathcal{L}(u) + e^{- \lambda_2 u} \lambda_2^2 \gamma (n - k + 1) u_{tt}^{2 - k} |\nabla u|^2 \sigma_{k - 1} (E_u)  \\
& + e^{- \lambda_2 u} \lambda_2^2 \Big(    \frac{1}{2} u_{tt}^{2 - k} \sigma_k^{ij} (E_u) u_i u_j
 - \sigma_k \big( W[u] \big)  u_t^2 + 2 u_{tt}^{- k} u_t^2 \sigma_k (E_u) \Big) \\
= &   - \lambda_2 e^{- \lambda_2 u} \Big( (1 + k) \psi - u_{tt}^{2 - k} \sigma_k^{ij} (E_u) A_{ij} \\
& + s u_{tt}^{2 - k} \sigma_k^{ij} (E_u) u_i u_j - \frac{r}{2} |\nabla u|^2 u_{tt}^{2 - k} (n - k + 1) \sigma_{k - 1} (E_u) \Big) \\
& + e^{- \lambda_2 u} \lambda_2^2 \gamma (n - k + 1) u_{tt}^{2 - k} |\nabla u|^2 \sigma_{k - 1} (E_u) \\
& + e^{- \lambda_2 u} \lambda_2^2 \Big(    \frac{1}{2} u_{tt}^{2 - k} \sigma_k^{ij} (E_u) u_i u_j
 - \sigma_k \big( W[u] \big)  u_t^2 + 2 u_{tt}^{- k} u_t^2 \sigma_k (E_u) \Big) .
\end{aligned}
\end{equation}

Last, we compute
\begin{equation} \label{eq2-20}
\begin{aligned}
\mathcal{L} \Big(  \lambda_3 t (t - 1) \Big) = & (1 - k) u_{tt}^{- k} \sigma_k (E_u)  2 \lambda_3 + u_{tt}^{1 - k} \sigma_k^{ij}(E_u) 2 \lambda_3 W_{ij}[u] \\
= &  (1 - k) u_{tt}^{- k} \sigma_k (E_u)  2 \lambda_3 + u_{tt}^{- k} \sigma_k^{ij}(E_u) 2 \lambda_3 \Big( (E_u)_{ij} + u_{ti} u_{tj} \Big) \\
= & 2 \lambda_3 u_{tt}^{- k} \sigma_k (E_u) + 2 \lambda_3 u_{tt}^{- k} \sigma_k^{ij} (E_u) u_{ti} u_{tj} \\
= &  2 \lambda_3 u_{tt}^{- k} \sigma_k (E_u) + 2 \lambda_3 u_{tt}^{- k} \sigma_k^{ij} \big( u_{tt} W[u] \big) u_{ti} u_{tj} \\
= & 2 \lambda_3 \sigma_k \big( W[u] \big).
\end{aligned}
\end{equation}

Combining \eqref{eq2-24}, \eqref{eq2-22} and \eqref{eq2-20} yields,
\begin{equation} \label{eq2-25}
\begin{aligned}
& \mathcal{L}(\Phi_1) \geq  2 u_l  \psi_l  +  2 u_l  u_{tt}^{2 - k} \sigma_k^{ij} (E_u) (R_{lji}^m u_m  + \gamma R_{lmm}^s u_s \delta_{ij} - A_{ij, l}) \\
& + 2 u_{tt}^{- k} \sigma_k (E_u) |\nabla u_t|^2 - \lambda_2 e^{- \lambda_2 u} \Big( (1 + k) \psi - u_{tt}^{2 - k} \sigma_k^{ij} (E_u) A_{ij} \\
& + s u_{tt}^{2 - k} \sigma_k^{ij} (E_u) u_i u_j - \frac{r}{2} |\nabla u|^2 u_{tt}^{2 - k} (n - k + 1) \sigma_{k - 1} (E_u) \Big) \\
& + e^{- \lambda_2 u} \lambda_2^2 \gamma (n - k + 1) u_{tt}^{2 - k} |\nabla u|^2 \sigma_{k - 1} (E_u) \\
& + e^{- \lambda_2 u} \lambda_2^2 \Big(    \frac{1}{2} u_{tt}^{2 - k} \sigma_k^{ij} (E_u) u_i u_j
 - \sigma_k \big( W[u] \big)  u_t^2 + 2 u_{tt}^{- k} u_t^2 \sigma_k (E_u) \Big) \\
& + 2 \lambda_3 \sigma_k \big( W[u] \big) \\
\geq &  2 u_l \psi_l - \lambda_2 e^{- \lambda_2 u} (1 + k) \psi + 2 e^{- \lambda_2 u} \lambda_2^2 u_{tt}^{- 1} u_t^2 \psi \\
& + \Big( \lambda_2 e^{- \lambda_2 u}  \frac{(n - k + 1) r}{2} |\nabla u|^2 + e^{- \lambda_2 u} \lambda_2^2 \gamma (n - k + 1) |\nabla u|^2 \\
& - C |\nabla u|^2 - C |\lambda_2| e^{- \lambda_2 u} \Big) u_{tt}^{2 - k} \sigma_{k - 1} (E_u)  \\
& + e^{- \lambda_2 u} \Big( \frac{\lambda_2^2}{2} - \lambda_2 s  \Big) u_{tt}^{2 - k} \sigma_k^{ij} (E_u) u_i u_j
 + \Big( 2 \lambda_3  - C e^{- \lambda_2 u} \lambda_2^2 \Big) \sigma_k \big( W[u] \big).
\end{aligned}
\end{equation}
By arithmetic and geometric mean inequality, Newton-Maclaurin inequality and \eqref{eq2}, we have
\begin{equation*}
\begin{aligned}
& k u_{tt}^{2 - k} \sigma_{k - 1} (E_u) |\nabla u|^2 + \psi  u_{tt}^{- 1} u_t^2
\geq  (k + 1) \Big( u_{tt}^{(2 - k) k - 1} \sigma_{k - 1}^k (E_u) |\nabla u|^{2 k} \psi u_t^2 \Big)^{\frac{1}{k + 1}} \\
\geq & C(n, k) \Big( \psi^{k - 1} |\nabla u|^{2 k} \psi  u_t^2 \Big)^{\frac{1}{k + 1}}
=  C(n, k) \psi^{\frac{k}{k + 1}} |\nabla u|^{\frac{2 k}{k + 1}} |u_t|^{\frac{2}{k + 1}}.
\end{aligned}
\end{equation*}

\vspace{2mm}

{\bf The case when $r \neq 0$.}

\vspace{2mm}

When $r > 0$, we may subtract $c_1 t + c_2$ from $u$, where $c_1$ and $c_2$ are sufficiently large constants, to make $u < 0$ and $u_t \leq - 1$ on $M \times [0, 1]$. When $r < 0$, we may add $c_1 t + c_2$ to $u$, to make $u > 0$ and $u_t \geq 1$ on $M \times [0, 1]$.

When $r > 0$, we choose $\lambda_2 > 0$ sufficiently large, while when $r < 0$, we choose $ - \lambda_2 > 0$ sufficiently large, and then choose $\lambda_3 > 0$ sufficiently large so that \eqref{eq2-25} reduces to
\begin{equation} \label{eq2-27}
\begin{aligned}
& \mathcal{L}(\Phi_1)
\geq - 2 |\nabla u| |\nabla \psi| - \lambda_2 e^{- \lambda_2 u} (1 + k) \psi \\
& + \min\Big\{ 2 |\lambda_2|, \frac{(n - k + 1) |r|}{4 k} \Big\} |\lambda_2| e^{- \lambda_2 u} C(n, k) \psi^{\frac{k}{k + 1}} |\nabla u|^{\frac{2 k}{k + 1}} |u_t|^{\frac{2}{k + 1}} \\
& + \Big( \frac{1}{2} \lambda_2 e^{- \lambda_2 u}  \frac{(n - k + 1) r}{2} |\nabla u|^2 - C |\nabla u|^2 - C |\lambda_2| e^{- \lambda_2 u} \Big) u_{tt}^{2 - k} \sigma_{k - 1} (E_u)  \\
& + e^{- \lambda_2 u} \Big( \frac{\lambda_2^2}{2} - \lambda_2 s  \Big) u_{tt}^{2 - k} \sigma_k^{ij} (E_u) u_i u_j
 + \Big( 2 \lambda_3  - C e^{- \lambda_2 u} \lambda_2^2 \Big) \sigma_k \big( W[u] \big) \\
\geq &  2 |\nabla \psi| \Big( |\nabla u|^{\frac{2 k}{k + 1}} - |\nabla u| \Big) - \lambda_2 e^{- \lambda_2 u} (1 + k) \psi \\
& + \frac{1}{2} \min\Big\{ 2 |\lambda_2|, \frac{(n - k + 1) |r|}{4 k} \Big\} |\lambda_2| e^{- \lambda_2 u} C(n, k) \psi^{\frac{k}{k + 1}} |\nabla u|^{\frac{2 k}{k + 1}}
\\ & + \Big( \lambda_2 e^{- \lambda_2 u}  \frac{(n - k + 1) r}{8} |\nabla u|^2 - C |\lambda_2| e^{- \lambda_2 u} \Big) u_{tt}^{2 - k} \sigma_{k - 1} (E_u) .
\end{aligned}
\end{equation}

Suppose that $\Phi_1$ attains its maximum at $(x_1, t_1) \in M \times (0, 1)$. We may assume that $|\nabla u| (x_1, t_1)$ is sufficiently large, since otherwise we are done, so that \eqref{eq2-27} implies that
\[  \mathcal{L}(\Phi_1) (x_1, t_1)  > 0 .  \]
But this is impossible. Therefore, $\Phi_1$ attains its maximum on $M \times \{ 0, 1 \}$. We hence obtain a bound for $|\nabla u|$ on $M \times [0, 1]$.

\vspace{2mm}

{\bf The case when $\gamma > 0$.}

In this case, we use the term
\[ e^{- \lambda_2 u} \lambda_2^2 \gamma (n - k + 1) u_{tt}^{2 - k} |\nabla u|^2 \sigma_{k - 1} (E_u)\]
instead of
\[  e^{- \lambda_2 u} \lambda_2 \frac{(n - k + 1) r}{2} u_{tt}^{2 - k} |\nabla u|^2 \sigma_{k - 1} (E_u) \]
to derive the estimate. We may subtract $c_1 t + c_2$ from $u$, where $c_1$ and $c_2$ are sufficiently large constants, to make $u < 0$ and $u_t \leq - 1$ on $M \times [0, 1]$.  We choose $\lambda_2 > 0$ sufficiently large, and then choose $\lambda_3 > 0$ sufficiently large so that
\eqref{eq2-25} reduces to
\begin{equation} \label{eq2-26}
\begin{aligned}
\mathcal{L}(\Phi_1)
\geq &  2 |\nabla \psi| \Big( |\nabla u|^{\frac{2 k}{k + 1}} - |\nabla u| \Big) - \lambda_2 e^{- \lambda_2 u} (1 + k) \psi \\
& + \frac{1}{2} \min\Big\{ 2, \frac{(n - k + 1) \gamma}{4 k} \Big\} \lambda_2^2 e^{- \lambda_2 u} C(n, k) \psi^{\frac{k}{k + 1}} |\nabla u|^{\frac{2 k}{k + 1}}
\\ & + \Big( \lambda_2^2 e^{- \lambda_2 u}  \frac{(n - k + 1) \gamma}{4} |\nabla u|^2 - C \lambda_2 e^{- \lambda_2 u} \Big) u_{tt}^{2 - k} \sigma_{k - 1} (E_u).
\end{aligned}
\end{equation}
The rest of the proof follows from the same line as the above case.

\end{proof}

\vspace{4mm}

\section{Second order boundary estimate}

\vspace{4mm}

In this section, we derive boundary estimate for second order derivatives.

\begin{thm} \label{Thm3}
Let $u$ be an admissible solution to \eqref{eq1}--\eqref{eq1-4}. Suppose that $\gamma > 0$ or $r > 0$. We have the estimate
\[ \max\limits_{M \times \{0, 1\}} \Big( u_{tt} + |\nabla u_t| + |\nabla^2 u| \Big) \leq C . \]
\end{thm}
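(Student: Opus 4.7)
The plan is to split the estimate by derivative type: tangential-tangential $u_{ij}$, mixed $u_{ti}$, and pure normal $u_{tt}$. The tangential-tangential part is immediate: since $u(\cdot,0)=u_0$ and $u(\cdot,1)=u_1$, one has $u_{ij}(x,0)=(u_0)_{ij}(x)$ and $u_{ij}(x,1)=(u_1)_{ij}(x)$, bounded by $|u_0|_{C^2}$ and $|u_1|_{C^2}$. From now on I work on the face $t=0$; the case $t=1$ is parallel.

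For the mixed derivative $u_{t\xi}$ at a point $(x_0,0)$ and a fixed spatial direction $\xi$, I would use a barrier argument. Since $u_\xi-(u_0)_\xi$ vanishes on $M\times\{0\}$, consider the auxiliary function
\[ \Upsilon^\pm = \pm\bigl(u_\xi-(u_0)_\xi\bigr) + N\phi + B\, t(t-1), \]
where $\phi$ combines a locally concave spatial bump centered at $x_0$ with the nonnegative admissibility defect $(1-t)u_0+tu_1-u\geq 0$ of Proposition \ref{Prop1}, and $N,B>0$ are constants to be chosen. Arrange $\Upsilon^\pm(\cdot,0)\leq 0$ with equality at $x_0$ and $\Upsilon^\pm(\cdot,1)\leq 0$. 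Then compute $\mathcal L(\Upsilon^\pm)$ using the formulas already in hand: \eqref{eq2-14} for $\mathcal L(u_\xi)$, \eqref{eq2-18} for $\mathcal L(u)$, and \eqref{eq2-20} giving $\mathcal L\bigl(t(t-1)\bigr)=2\sigma_k(W[u])$, together with direct purely-spatial computations of $\mathcal L(\phi)$ and $\mathcal L((u_0)_\xi)$. The decisive use of $\gamma>0$ is that $\mathcal L$ applied to any purely spatial function $\phi(x)$ produces the term
\[ (n-k+1)\gamma\, u_{tt}^{2-k}\,\sigma_{k-1}(E_u)\,\Delta\phi, \]
whose sign is determined by that of $\Delta\phi$. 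With $\phi$ chosen so that this contribution is negative and $N$ taken large, it absorbs the uncontrolled $u_{tt}^{2-k}\sigma_{k-1}(E_u)$-type quantities coming from $\pm\mathcal L\bigl(u_\xi-(u_0)_\xi\bigr)$ and from the remaining spatial pieces, while the $B$-term handles the residual zero-order contributions via $2\sigma_k(W[u])\geq 0$. This yields $\mathcal L(\Upsilon^\pm)\geq 0$, and the maximum principle forces $\Upsilon^\pm\leq 0$ in $M\times[0,1]$. Taking the inward normal derivative at $(x_0,0)$ then produces the boundary bound on $u_{t\xi}(x_0,0)$ in terms of $N,B$ and the $C^0$--$C^1$ estimates of Propositions \ref{Prop1}--\ref{Prop2}.

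Once $|\nabla u_t|$ is bounded on $t=0$, the normal-normal derivative is read directly from the equation. At $t=0$ the tensor $W[u]$ reduces to $W[u_0]$, which is determined by the data and satisfies $\sigma_k(W[u_0])>0$ by admissibility, so \eqref{eq1} rearranges to
\[ u_{tt}(x,0) = \frac{\psi(x,0) + \sigma_k^{ij}\bigl(W[u_0]\bigr)\,u_{ti}(x,0)\,u_{tj}(x,0)}{\sigma_k\bigl(W[u_0]\bigr)}, \]
which is bounded.

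The main obstacle is the bookkeeping in the mixed-derivative step: selecting the spatial barrier $\phi$ and the constants $N,B$ so that the positive quantity $(n-k+1)\gamma\, u_{tt}^{2-k}\,\sigma_{k-1}(E_u)|\Delta\phi|$ dominates, uniformly in the unknown solution, all the a priori uncontrolled $u_{tt}^{2-k}\sigma_{k-1}(E_u)$ pieces from curvature and from $\mathcal L((u_0)_\xi)$, as well as the sign-indefinite cross terms $\sigma_k^{ij}(E_u)\,u_{ti}(\cdots)_j$, given that $u_{tt}$ itself is precisely the quantity being estimated.
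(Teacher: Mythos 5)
Your decomposition (tangential from the boundary data, then $|\nabla u_t|$ by a barrier, then $u_{tt}$ read off the equation at $t=0$ using the positive lower bound on $\sigma_k(W[u_0])$) is exactly the paper's outline. The gap is entirely in the mixed-derivative step, and it is a real one. First, there is a sign slip: you take $\phi$ locally concave so that $(n-k+1)\gamma\,u_{tt}^{2-k}\sigma_{k-1}(E_u)\Delta\phi<0$, and then say this negative contribution ``absorbs'' the bad terms; but you want $\Upsilon^\pm\le 0$ with boundary values $\le 0$, which by the maximum principle requires $\mathcal L(\Upsilon^\pm)\ge 0$ at a hypothetical interior maximum, so a large negative term in $N\mathcal L(\phi)$ works against you. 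You would need $\Delta\phi>0$.

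Second, and more seriously, even after fixing the sign, placing all the weight on the $\gamma\Delta\phi$-term cannot close the estimate, because it has the wrong $u_{tt}$-scaling. Using $\sigma_k(E_u)=u_{tt}^{k-1}\psi$ and Newton--Maclaurin, $u_{tt}^{2-k}\sigma_{k-1}(E_u)\gtrsim \psi^{(k-1)/k}u_{tt}^{1/k}$, which tends to $0$ as $u_{tt}\to 0$; so this term alone cannot dominate the zeroth-order quantities of size $\psi$ and $|\nabla\psi|$ that arise from differentiating the equation and from $\mathcal L(u)$ (which enters through the admissibility-defect part of your $\phi$). The paper's proof makes essential use of the hypothesis $\lambda(W[u_0])\in\Gamma_k$, which your sketch never invokes: the barrier $e^{-a(u-u_0)}$ (vanishing identically on $t=0$, so no point-wise bump is needed) produces a positive term $a\,e^{-a(u-u_0)}u_{tt}^{2-k}\sigma_k^{ij}(E_u)W_{ij}[u_0]\ge c_0\,a\,e^{-a(u-u_0)}(n-k+1)u_{tt}^{2-k}\sigma_{k-1}(E_u)$ together with a second positive term $2a^2e^{-a(u-u_0)}u_{tt}^{-1}u_t^2\psi$ of \emph{opposite} $u_{tt}$-scaling; Newton--Maclaurin plus the AM--GM inequality on these two yields a quantity of order $a^{(k+2)/(k+1)}\psi^{k/(k+1)}$ with no free power of $u_{tt}$, which for $a$ large dominates $a(1+k)\psi$ and $|\psi_\xi|$. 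The $\gamma$-term in the paper, $\gamma a^2 u_{tt}^{2-k}\sigma_{k-1}(E_u)|\nabla(u-u_0)|^2$, only absorbs the $r$- and $s$-gradient cross-terms and cannot serve as the main engine, since it carries a factor $|\nabla(u-u_0)|^2$ that may vanish. Your proposal is therefore missing both the use of $\lambda(W[u_0])\in\Gamma_k$ and the Newton--Maclaurin/AM--GM balancing, which are the two ingredients that make the boundary gradient estimate go through for all $u_{tt}$.
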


\begin{proof}
A bound for $|\nabla^2 u|$ on $t = 0$ and $t = 1$ is immediate.

Next, we give a bound for $|\nabla u_t|$ on $t = 0$. Consider the test function
\[ \Psi = \big| \nabla (u - u_0) \big| + e^{- a (u - u_0)} - 1 + b t (t - 1) - c t , \]
where $a$, $b$ and $c$ are positive constants to be chosen later.

We shall prove that by choosing $a$, $b$ appropriately, $\Psi$ can not achieve an interior maximum. If not, suppose that $\Psi$ attains an interior maximum at $(x_1, t_1) \in M \times (0, 1)$. We choose a smooth local orthonormal frame field $e_1, \ldots, e_n$ around $x_1$ on $M$ such that
\[ e_1 (x_1) = \frac{\nabla(u - u_0)}{\big| \nabla(u - u_0) \big|} (x_1, t_1) \quad \text{  if  }  \big| \nabla(u - u_0) \big| (x_1, t_1) \neq 0. \]
If $\big| \nabla(u - u_0) \big| (x_1, t_1) = 0$, we may choose arbitrary smooth local orthonormal frame field $e_1, \ldots, e_n$ around $x_1$ on $M$. Then we see that
\[ \tilde{\Psi} = (u - u_0)_1 + e^{- a (u - u_0)} - 1 + b t (t - 1) - c t \]
attains a local maximum at $(x_1, t_1)$.

By \eqref{eq2-14},
\begin{equation} \label{eq4-1}
\begin{aligned}
\mathcal{L} \Big( (u - u_0)_1 \Big) =  \mathcal{L} \big( u_1 \big) - \mathcal{L} \big( (u_0)_1 \big) \geq \psi_1 - C u_{tt}^{2 - k} \sigma_{k - 1} (E_u) .
\end{aligned}
\end{equation}

Next, we compute
\[ \begin{aligned}
\mathcal{M}_{ij} \big( e^{- a (u - u_0) } \big) = &  e^{- a (u - u_0) } \Big( - a \mathcal{M}_{ij} (u - u _0) + a^2 (u - u_0)_i (u - u_0)_j \\
& + \gamma a^2 \big| \nabla ( u - u_0 ) \big|^2 \delta_{ij} \Big).
\end{aligned} \]
It follows that
\begin{equation} \label{eq4-3}
\begin{aligned}
& \mathcal{L} \big( e^{- a (u - u_0) } \big) \\
= & - a e^{- a (u - u_0)} \mathcal{L}(u - u_0) + u_{tt}^{- k} \sigma_k (E_u) e^{- a (u - u_0)} a^2 u_t^2 \\
& + a^2 u_{tt}^{1 - k} \sigma_k^{ij} (E_u) e^{- a (u - u_0)} \Big(  (u - u_0)_i (u - u_0)_j u_{tt} \\
& - 2 u_{ti} (u - u_0)_j u_t + u_{tt}^{- 1} u_t^2 u_{ti} u_{tj} \Big) \\
& + u_{tt}^{2 - k} (n - k + 1) \sigma_{k - 1} (E_u) e^{- a (u - u_0)} \gamma a^2 \big| \nabla (u - u_0) \big|^2.
\end{aligned}
\end{equation}
By Cauchy-Schwartz inequality,
\begin{equation} \label{eq4-7}
\begin{aligned}
& u_{tt}^{1 - k} \sigma_k^{ij} (E_u) \Big(  (u - u_0)_i (u - u_0)_j u_{tt}
- 2 u_{ti} (u - u_0)_j u_t + u_{tt}^{- 1} u_t^2 u_{ti} u_{tj} \Big)  \\
\geq &  u_{tt}^{1 - k} \sigma_k^{ij} (E_u) \Big(  (u - u_0)_i (u - u_0)_j u_{tt}
 - \frac{1}{2} (u - u_0)_i (u - u_0)_j u_{tt} \\
& - 2 u_{tt}^{- 1} u_t^2 u_{ti} u_{tj}+ u_{tt}^{- 1} u_t^2 u_{ti} u_{tj} \Big) \\
= &   u_{tt}^{1 - k} \sigma_k^{ij} (E_u) \Big( \frac{1}{2} (u - u_0)_i (u - u_0)_j u_{tt}
 - u_{tt}^{- 1} u_t^2 u_{ti} u_{tj} \Big) \\
= &  \frac{1}{2} u_{tt}^{2 - k} \sigma_k^{ij} (E_u) (u - u_0)_i (u - u_0)_j
 - u_{tt}^{- 1} \sigma_k^{ij} \big( W[u] \big)  u_t^2 u_{ti} u_{tj} \\
= &  \frac{1}{2} u_{tt}^{2 - k} \sigma_k^{ij} (E_u) (u - u_0)_i (u - u_0)_j
 - \sigma_k \big( W[u] \big)  u_t^2 + u_{tt}^{- k} u_t^2 \sigma_k (E_u).
\end{aligned}
\end{equation}
Also, we notice that
\[ \begin{aligned}
\mathcal{L} (u_0) = u_{tt}^{2 - k} \sigma_k^{ij}(E_u) \Big( (u_0)_{ij} + s u_i (u_0)_j + s (u_0)_i u_j + \big( \gamma \Delta u_0 - r \langle \nabla u, \nabla u_0 \rangle \big) \delta_{ij} \Big).
\end{aligned} \]
Combining with \eqref{eq2-18}, we arrive at
\begin{equation} \label{eq4-8}
\begin{aligned}
& \mathcal{L} (u - u_0) = (1 + k) \psi + s u_{tt}^{2 - k} \sigma_k^{ij} (E_u) (u - u_0)_i (u - u_0)_j \\
& - \frac{r}{2} u_{tt}^{2 - k} (n - k + 1) \sigma_{k - 1} (E_u) \big| \nabla (u - u_0) \big|^2 - u_{tt}^{2 - k} \sigma_k^{ij} (E_u) W_{ij}[u_0] .
\end{aligned}
\end{equation}
By \eqref{eq4-7} and \eqref{eq4-8}, \eqref{eq4-3} can be estimated as
\begin{equation} \label{eq4-4}
\begin{aligned}
& \mathcal{L} \big( e^{- a (u - u_0) } \big) \\
\geq & - a e^{- a (u - u_0)} \Big(  (1 + k) \psi + s u_{tt}^{2 - k} \sigma_k^{ij} (E_u) (u - u_0)_i (u - u_0)_j \\
& - \frac{r}{2} u_{tt}^{2 - k} (n - k + 1) \sigma_{k - 1} (E_u) \big| \nabla (u - u_0) \big|^2 - u_{tt}^{2 - k} \sigma_k^{ij} (E_u) W_{ij}[u_0]  \Big) \\
& + a^2 e^{- a (u - u_0)} \Big( \frac{1}{2} u_{tt}^{2 - k} \sigma_k^{ij} (E_u) (u - u_0)_i (u - u_0)_j - \sigma_k \big( W[u] \big) u_t^2 \\
& + 2 u_{tt}^{- k} u_t^2 \sigma_k (E_u) \Big) + u_{tt}^{2 - k} (n - k + 1) \sigma_{k - 1} (E_u) \gamma a^2  e^{- a (u - u_0) } \big| \nabla (u - u_0) \big|^2.
\end{aligned}
\end{equation}
Also by \eqref{eq2-20},
\begin{equation} \label{eq4-5}
\mathcal{L} \Big( b t (t - 1) \Big)
= 2 b \sigma_k \big( W[u] \big).
\end{equation}
In addition, it is obvious to see that
\[ \mathcal{L} ( - 1 - c t ) = 0. \]

We realize that $\lambda\big( W[u_0] \big) \in \Gamma_k$ and $\Gamma_k$ is open. Thus there exists a small positive constant $c_0$ such that $\lambda\big( W[u_0] - c_0 I \big) \in \Gamma_k$.
It follows that
\[ \begin{aligned}
\sigma_k^{ij} (E_u) W_{ij}[u_0] = & \sigma_k^{ij} (E_u) \big( W_{ij}[u_0] - c_0 \delta_{ij} \big) + c_0 (n - k + 1) \sigma_{k - 1} (E_u) \\
\geq & k \sigma_k^{\frac{1}{k}}\big( W[u_0] - c_0 I \big) \sigma_k^{1 - \frac{1}{k}}(E_u) +  c_0 (n - k + 1) \sigma_{k - 1} (E_u) \\
> & c_0 (n - k + 1) \sigma_{k - 1} (E_u) .
\end{aligned} \]

Combining \eqref{eq4-1}, \eqref{eq4-4}, \eqref{eq4-5} and in view of the above fact, we have
\begin{equation} \label{eq4-9}
\begin{aligned}
& \mathcal{L} ( \tilde{\Psi} ) \geq \psi_1 - C u_{tt}^{2 - k} \sigma_{k - 1} (E_u) \\
&  - a e^{- a (u - u_0)}  (1 + k) \psi - a e^{- a (u - u_0)}  s u_{tt}^{2 - k} \sigma_k^{ij} (E_u) (u - u_0)_i (u - u_0)_j \\
& +  a e^{- a (u - u_0)}  \frac{r}{2} u_{tt}^{2 - k} (n - k + 1) \sigma_{k - 1} (E_u) \big| \nabla (u - u_0) \big|^2 \\
& + a e^{- a (u - u_0)}  u_{tt}^{2 - k}  c_0 (n - k + 1) \sigma_{k - 1} (E_u)  \\
& + a^2 e^{- a (u - u_0)}  \frac{1}{2} u_{tt}^{2 - k} \sigma_k^{ij} (E_u) (u - u_0)_i (u - u_0)_j - a^2 e^{- a (u - u_0)} \sigma_k \big( W[u] \big) u_t^2 \\
& + a^2 e^{- a (u - u_0)}  2 u_{tt}^{- k} u_t^2 \sigma_k (E_u) \\
& + u_{tt}^{2 - k} (n - k + 1) \sigma_{k - 1} (E_u) \gamma a^2  e^{- a (u - u_0) } \big| \nabla (u - u_0) \big|^2 + 2 b \sigma_k \big( W[u] \big).
\end{aligned}
\end{equation}

We may subtract $c_1 t + c_2$ from $u$, where $c_1$ and $c_2$ are sufficiently large constants, to make $u - u_0 < 0$ and $u_t \leq - 1$ on $M \times [0, 1]$. Choosing $a$ sufficiently large, \eqref{eq4-9} reduces to
\begin{equation} \label{eq4-12}
\begin{aligned}
& \mathcal{L} ( \tilde{\Psi} ) \geq \psi_1 - a e^{- a (u - u_0)}  (1 + k) \psi   \\
& +  a e^{- a (u - u_0)}  \frac{r}{2} u_{tt}^{2 - k} (n - k + 1) \sigma_{k - 1} (E_u) \big| \nabla (u - u_0) \big|^2 \\
& + \frac{a}{2} e^{- a (u - u_0)}  u_{tt}^{2 - k}  c_0 (n - k + 1) \sigma_{k - 1} (E_u)  \\
& - a^2 e^{- a (u - u_0)} \sigma_k \big( W[u] \big) u_t^2  + a^2 e^{- a (u - u_0)}  2 u_{tt}^{- k}  \sigma_k (E_u) \\
& + u_{tt}^{2 - k} (n - k + 1) \sigma_{k - 1} (E_u) \gamma a^2  e^{- a (u - u_0) } \big| \nabla (u - u_0) \big|^2 + 2 b \sigma_k \big( W[u] \big).
\end{aligned}
\end{equation}

\vspace{2mm}

{\bf The case when $\gamma > 0$ or  $r > 0$.}

\vspace{2mm}

In these cases, by choosing $a$ further large if necessary, \eqref{eq4-12} further reduces to
\begin{equation} \label{eq4-6}
\begin{aligned}
& \mathcal{L} ( \tilde{\Psi} )
\geq \psi_1
- a e^{- a (u - u_0)} (1 + k) \psi \\
& + \frac{a}{2} e^{- a (u - u_0)} c_0 (n - k + 1) \sigma_{k - 1}(E_u) u_{tt}^{2 - k} - a^2 e^{- a (u - u_0)} \sigma_k \big( W[u] \big) u_t^2 \\
& + 2 a^2 e^{- a (u - u_0)} u_{tt}^{- k} \sigma_k (E_u) + 2 b \sigma_k \big( W[u] \big).
\end{aligned}
\end{equation}
By Newton-Maclaurin inequality, arithmetic and geometric mean inequality, we have
\begin{equation*}
\begin{aligned}
& \frac{a}{2} e^{- a (u - u_0)} c_0 (n - k + 1) \sigma_{k - 1}(E_u) u_{tt}^{2 - k} + 2 a^2 e^{- a (u - u_0)} u_{tt}^{- k} \sigma_k (E_u) \\
\geq & e^{- a (u - u_0)} \Big(  \frac{a}{2} c_0 (n - k + 1) \sigma_{k}^{\frac{k - 1}{k}}(E_u) u_{tt}^{2 - k} + 2 a^2 u_{tt}^{- 1} \psi \Big) \\
= & e^{- a (u - u_0)} \Big(  \frac{a}{2} c_0 (n - k + 1) \psi^{\frac{k - 1}{k}} u_{tt}^{\frac{1}{k}} + 2 a^2 u_{tt}^{- 1} \psi \Big) \\
\geq &  e^{- a (u - u_0)} (k + 1) 2^{- \frac{k - 1}{k + 1}} \Big( \frac{n - k + 1}{k} \Big)^{\frac{k}{k + 1}} a^{\frac{k + 2}{k + 1}} c_0^{\frac{k}{k + 1}} \psi^{\frac{k}{k + 1}} .
\end{aligned}
\end{equation*}
Hence \eqref{eq4-6} reduces to
\begin{equation} \label{eq4-10}
\begin{aligned}
& \mathcal{L} ( \tilde{\Psi} )
\geq  \psi_1
- a e^{- a (u - u_0)} (1 + k) \psi \\
&  + e^{- a (u - u_0)} (k + 1) 2^{- \frac{k - 1}{k + 1}} \Big( \frac{n - k + 1}{k} \Big)^{\frac{k}{k + 1}} a^{\frac{k + 2}{k + 1}} c_0^{\frac{k}{k + 1}} \psi^{\frac{k}{k + 1}} \\
& - a^2 e^{- a (u - u_0)} \sigma_k \big( W[u] \big) u_t^2 + 2 b \sigma_k \big( W[u] \big).
\end{aligned}
\end{equation}
By choosing $a$ further large depending on $\sup\psi$, $\sup \big| \nabla ( \psi^{\frac{1}{k + 1}} ) \big|$, and then choosing $b$ sufficiently large, we have
\[ \mathcal{L} ( \tilde{\Psi} ) > 0  \quad \text{  in a neighborhood of  }  (x_1, t_1) . \]
This means that $\tilde{\Psi}$ can not have an interior maximum at $(x_1, t_1)$. Hence $\Psi$ can not attain its maximum in $M \times (0, 1)$. That is,
\[ \max\limits_{M \times [0, 1]} \Psi =  \max\limits_{M \times \{0, 1\}} \Psi . \]
Now we choose $c$ sufficiently large such that $\Psi (\cdot, 1) \leq 0$.
Hence we have proved that
\[ \Psi \leq \Psi(\cdot, 0) \equiv 0 \quad \text{  on  } M \times [0, 1]. \]

For any point $(x_0, 0) \in M \times \{ t = 0 \}$, we choose a smooth local orthonormal frame field around $x_0$ on $M$. Then in a neighborhood of $(x_0, 0)$, for any $1 \leq l \leq n$,
\[ \begin{aligned}
0 \geq \Psi = &  \big| \nabla (u - u_0) \big| + e^{- a (u - u_0)} - 1 + b t (t - 1) - c t  \\
\geq & \pm (u - u_0)_l +  e^{- a (u - u_0)} - 1 + b t (t - 1) - c t .
\end{aligned}
\]
Since
\[ \Big( \pm (u - u_0)_l +  e^{- a (u - u_0)} - 1 + b t (t - 1) - c t \Big) (x_0, 0) = 0, \]
we thus obtain
\[ \Big( \pm (u - u_0)_l +  e^{- a (u - u_0)} - 1 + b t (t - 1) - c t \Big)_t (x_0, 0) \leq 0, \]
which implies a bound for $|u_{lt}|(x_0, 0)$. Therefore, we have derived a bound for $|\nabla u_t|$ on $t = 0$.

For a bound of $|\nabla u_t|$ on $t = 1$, we consider the test function
\[ \Phi = \big| \nabla (u - u_1) \big| + e^{- a (u - u_1)} - 1 + b t (t - 1) + c (t - 1) , \]
where $a$, $b$ and $c$ are positive constants to be chosen. We can prove similarly as above.

Finally, by \eqref{eq1}, we can directly see that on $t = 0$,
\[ u_{tt} \sigma_k \big( W[u_0] \big) - \sigma_{k}^{ij} \big( W[u_0] \big)  u_{ti} u_{tj} = \psi(x, 0). \]
Since we have obtained a bound for $|\nabla u_t|$ on $t = 0$ and $\sigma_k \big( W[u_0] \big)$ has a positive lower bound, we obtain an upper bound for $u_{tt}$ on $t = 0$. An upper bound for $u_{tt}$ on $t = 1$ can be proved similarly.
\end{proof}

\vspace{4mm}

\section{Global second order estimate}~

\vspace{4mm}

In this section, we write equation \eqref{eq2} in the following form
\begin{equation} \label{eq3}
\begin{aligned}
G(R):= \ln \Big( u_{tt}^{1 - k} \sigma_k (E_u) \Big) = & \ln \Big( u_{tt} \sigma_k (W[u]) -  \sigma_{k}^{ij} (W[u])  u_{ti} u_{tj} \Big)  = \ln \psi,
\end{aligned}
\end{equation}
where
\begin{equation} \label{eq4-11}
R := R_u = (r_{IJ})_{0 \leq I, J \leq n} = \left(
                                         \begin{array}{cccc}
                                           u_{tt} & u_{t1} & \cdots & u_{tn} \\
                                           u_{1t} & W_{11}[u] & \cdots & W_{1n}[u] \\
                                           \vdots & \vdots & \ddots & \vdots \\
                                           u_{nt} & W_{n1}[u] & \cdots & W_{nn}[u] \\
                                         \end{array}
                                       \right).
\end{equation}
The linearized operator of $G(R)$ is given by
\[ \begin{aligned}
\mathbb{L}(v) := & G^{tt} (R) v_{tt} + 2 G^{ti}(R) v_{ti} + G^{ij}(R) \Big( v_{ij} + s u_i v_j + s u_j v_i \\
& + \big( \gamma \Delta v - r \langle \nabla u, \nabla v \rangle \big) \delta_{ij} \Big) \\
= & G^{tt} (R) v_{tt} + 2 G^{ti}(R) v_{ti} + G^{ij}(R) \mathcal{M}_{ij}(v),
\end{aligned} \]
where
\begin{equation} \label{eq3-1}
\begin{aligned}
G^{tt} = & \frac{\partial G}{\partial r_{00}} = \frac{\partial G}{\partial u_{tt}} = \frac{\sigma_k \big( W[u] \big)}{\sigma_k (E_u)} u_{tt}^{k - 1}, \\
G^{ti} = & \frac{\partial G}{\partial r_{0i}} = \frac{\partial G}{\partial u_{ti}} = \frac{- \sigma_k^{ij} \big( W[u] \big) u_{tj}}{u_{tt}^{1 - k} \sigma_k (E_u)} = \frac{- \sigma_k^{ij}(E_u) u_{tj}}{\sigma_k (E_u)}, \quad 1 \leq i \leq n, \\
G^{ij} = & \frac{\partial G}{\partial r_{ij}} = \frac{\partial G}{\partial W_{ij}[u]} = \frac{ u_{tt} \sigma_k^{ij}(E_u)}{\sigma_k (E_u)}, \quad 1 \leq i, j \leq n.
\end{aligned}
\end{equation}

Now we can compute
\begin{equation} \label{eq3-2}
\begin{aligned}
\mathbb{L}(u_{tt}) = & G^{tt} (R) u_{tttt} + 2 G^{ti}(R) u_{ttti} + G^{ij}(R) \Big( u_{ttij} + s u_i u_{ttj} + s u_j u_{tti} \\
& + \big( \gamma \Delta u_{tt} - r \langle \nabla u, \nabla u_{tt} \rangle \big) \delta_{ij} \Big) \\
= & G^{tt} (R) u_{tttt} + 2 G^{ti}(R) u_{ttti} + G^{ij}(R) \mathcal{M}_{ij}(u_{tt}).
\end{aligned}
\end{equation}

Differentiating \eqref{eq3} with respect to $t$ we obtain
\begin{equation*}
G^{IJ} (R) r_{IJt} = G^{tt} (R) u_{ttt} + 2 G^{ti} (R) u_{tit} + G^{ij} (R) \big( W_{ij}[u] \big)_t = \frac{\psi_t}{\psi}.
\end{equation*}
Differentiating again we obtain
\begin{equation} \label{eq3-3}
\begin{aligned}
& G^{IJ, KL} r_{IJ t} r_{KL t} + G^{tt} (R) u_{tttt} + 2 G^{ti} u_{titt} + G^{ij} (R) \big( W_{ij} [u] \big)_{tt} \\
= & \frac{\psi_{tt}}{\psi} - \frac{\psi_t^2}{\psi^2},
\end{aligned}
\end{equation}
where
\[ \begin{aligned}
\big( W_{ij} [u] \big)_t = & u_{ijt} + s u_{it} u_j + s u_i u_{jt} + \Big( \gamma \Delta u_t - r \langle \nabla u, \nabla u_t \rangle \Big) \delta_{ij}, \\
\big( W_{ij} [u] \big)_{tt} = & u_{ijtt} + s u_{itt} u_j + 2 s u_{it} u_{jt} + s u_i u_{jtt} \\
& + \Big( \gamma \Delta u_{tt} - r \langle \nabla u, \nabla u_{tt} \rangle - r |\nabla u_t|^2 \Big) \delta_{ij}.
\end{aligned} \]
By \eqref{eq3-3}, we can see that \eqref{eq3-2} can be expressed as
\begin{equation} \label{eq3-4}
\begin{aligned}
\mathbb{L}(u_{tt}) = & \frac{\psi_{tt}}{\psi} - \frac{\psi_t^2}{\psi^2} - G^{IJ, KL} r_{IJ t} r_{KL t} \\
& - 2 s G^{ij}(R) u _{it} u_{jt} + r |\nabla u_t|^2 \sum G^{ii} (R) \\
 = & \frac{\psi_{tt}}{\psi} - \frac{\psi_t^2}{\psi^2} - G^{IJ, KL} r_{IJ t} r_{KL t} \\
& - 2 s \frac{ u_{tt} \sigma_k^{ij}(E_u)}{\sigma_k (E_u)} u _{it} u_{jt} + r |\nabla u_t|^2 \frac{(n - k + 1) u_{tt} \sigma_{k - 1} (E_u)}{\sigma_k (E_u)} .
\end{aligned}
\end{equation}

In order to give an upper bound for $u_{tt}$, we also need to compute $\mathbb{L}(u_t^2)$, which can be obtained by first computing $\mathcal{L}(u_t^2)$. By direct calculation,
\[ \begin{aligned}
\mathcal{M}_{ij} ( u_t^2 ) = & (u_t^2)_{ij} + s u_i (u_t^2)_j + s (u_t^2)_i u_j + \big( \gamma \Delta (u_t^2) - r \langle \nabla u, \nabla(u_t^2) \rangle \big) \delta_{ij}  \\
= & 2 u_t \mathcal{M}_{ij} (u_t) + 2 u_{ti} u_{tj} + 2 \gamma |\nabla u_t|^2 \delta_{ij},
\end{aligned} \]
and thus
\begin{equation} \label{eq2-11}
\begin{aligned}
\mathcal{L}(u_t^2) = & (1 - k) u_{tt}^{- k} \sigma_k (E_u) (2 u_t u_{ttt} + 2 u_{tt}^2) \\
& + u_{tt}^{1 - k} \sigma_k^{ij}(E_u) \Big( (2 u_t u_{ttt} + 2 u_{tt}^2) W_{ij}[u] + u_{tt} \big( 2 u_t \mathcal{M}_{ij}(u_t) + 2 u_{ti} u_{tj}  \\
& + 2 \gamma |\nabla u_t|^2 \delta_{ij} \big) - u_{ti} (2 u_t u_{ttj} + 2 u_{tj} u_{tt}) - (2 u_t u_{tti} + 2 u_{ti} u_{tt}) u_{tj} \Big) \\
= & 2 u_t \mathcal{L}(u_t) + 2 u_{tt} \psi + 2 \gamma u_{tt}^{1 - k} |\nabla u_t|^2 (n - k + 1) \sigma_{k - 1} (E_u).
\end{aligned}
\end{equation}
In addition, we can compute
\[ \begin{aligned}
& \mathcal{L}(u_t) = (1 - k) u_{tt}^{- k} \sigma_k (E_u) u_{ttt} \\
& + u_{tt}^{1 - k} \sigma_k^{ij} (E_u) \Big( u_{ttt} W_{ij}[u] + u_{tt} \mathcal{M}_{ij}(u_t) - u_{ti} u_{ttj} - u_{tti} u_{tj} \Big),
\end{aligned} \]
where
\[ \mathcal{M}_{ij}(u_t) = u_{tij} + s u_i u_{tj} + s u_{ti} u_j + \big( \gamma \Delta u_t - r \langle \nabla u, \nabla u_t \rangle \big) \delta_{ij}. \]
Differentiating \eqref{eq2} with respect to $t$, we have
\[ (1 - k) u_{tt}^{- k} u_{ttt} \sigma_k (E_u) + u_{tt}^{1 - k} \sigma_k^{ij} (E_u) (E_u)_{ijt} =  \psi_t ,  \]
where
\[ \begin{aligned}
& (E_u)_{ijt} = u_{ttt} W_{ij}[u] + u_{tt} \Big(  u_{ijt} + s u_{it} u_j + s u_i u_{jt} \\
 & + \big( \gamma (\Delta u)_t - r \langle \nabla u, \nabla u_t \rangle \big) \delta_{ij} - u_{tti} u_{tj} - u_{ti} u_{ttj} \Big).
\end{aligned} \]
We notice that
\[ \mathcal{L} (u_t) =  \psi_t . \]
Hence \eqref{eq2-11} becomes
\begin{equation} \label{eq2-17}
\begin{aligned}
\mathcal{L}(u_t^2)
= & 2 u_t  \psi_t  + 2 u_{tt} \psi  + 2 \gamma u_{tt}^{1 - k} |\nabla u_t|^2 (n - k + 1) \sigma_{k - 1} (E_u).
\end{aligned}
\end{equation}

We note that the relation between $\mathcal{L}(v)$ and $\mathbb{L}(v)$ is
\begin{equation} \label{eq3-8}
\mathbb{L}(v) = \frac{\mathcal{L}(v)}{u_{tt}^{1 - k} \sigma_k (E_u)}.
\end{equation}
Therefore, we know that
\begin{equation} \label{eq3-9}
\begin{aligned}
\mathbb{L}(u_t^2)
= & \frac{2 u_t  \psi_t}{\psi} + 2 u_{tt} + \frac{2 \gamma |\nabla u_t|^2 (n - k + 1) \sigma_{k - 1} (E_u)}{\sigma_k (E_u)}.
\end{aligned}
\end{equation}

Also, we notice that for a function $\eta(v)$, we have
\begin{equation} \label{eq3-20}
\begin{aligned}
\mathcal{L} \big( \eta (v) \big) = & \eta' \mathcal{L} (v) + \eta'' v_t^2 \sigma_k \big( W[u] \big) + \eta'' u_{tt}^{2 - k} \sigma_k^{ij} (E_u) v_i v_j \\
& + (n - k + 1) \gamma \eta'' u_{tt}^{2 - k} \sigma_{k - 1} (E_u) |\nabla v|^2 - 2 \eta'' u_{tt}^{1 - k} \sigma_k^{ij} (E_u) u_{ti} v_j v_t.
\end{aligned}
\end{equation}
Therefore,
\begin{equation} \label{eq3-11}
\begin{aligned}
\mathbb{L} \big( \eta(v) \big) =  & \eta' \mathbb{L} (v) + \eta'' \frac{v_t^2 \sigma_k \big( W[u] \big)}{u_{tt}^{1 - k} \sigma_k (E_u)} + \eta'' \frac{u_{tt} \sigma_k^{ij} (E_u) v_i v_j}{\sigma_k(E_u)} \\
& + \frac{(n - k + 1) \gamma \eta'' u_{tt} \sigma_{k - 1} (E_u) |\nabla v|^2}{\sigma_k (E_u)} - \frac{ 2 \eta'' \sigma_k^{ij} (E_u) u_{ti} v_j v_t}{\sigma_k (E_u)}.
\end{aligned}
\end{equation}

\begin{thm} \label{Thm1}
Let $u$ be an admissible solution to \eqref{eq1}--\eqref{eq1-4}. Suppose that (i) $\gamma > 0$ or (ii) $r > 0$ and $2 s k \leq r n$. We have the estimate
\[ \max\limits_{M \times [0, 1]} u_{tt} \leq C. \]
\end{thm}
\begin{proof}
Let $u \in C^4 \big( M \times (0, 1) \big) \cap C^2 \big( M \times [0, 1] \big)$ be an admissible solution of \eqref{eq3}.  We may subtract $c_1 t + c_2$ from $u$, where $c_1$ and $c_2$ are sufficiently large constants, to make $u < 0$ and $u_t \leq - 1$ on $M \times [0, 1]$.

We consider the test function $u_{tt} + \eta(u_t^2)$, where $\eta$ is a function to be chosen later. By \eqref{eq3-4}, \eqref{eq3-9}, \eqref{eq3-11} and the concavity of $G$ we have
\begin{equation} \label{eq5-11}
\begin{aligned}
& \mathbb{L} \Big( u_{tt} + \eta( u_t^2 ) \Big) \\
\geq & \frac{\psi_{tt}}{\psi} - \frac{\psi_t^2}{\psi^2} - 2 s \frac{ u_{tt} \sigma_k^{ij}(E_u)}{\sigma_k (E_u)} u _{it} u_{jt} + r |\nabla u_t|^2 \frac{(n - k + 1) u_{tt} \sigma_{k - 1} (E_u)}{\sigma_k (E_u)} \\
+ & \eta' \bigg(  \frac{2 u_t \psi_t}{\psi} + 2 u_{tt} + \frac{2 \gamma |\nabla u_t|^2 (n - k + 1) \sigma_{k - 1} (E_u)}{\sigma_k (E_u)} \bigg) \\
& + \eta'' \frac{4 u_t^2 u_{tt}^2 \sigma_k \big( W[u] \big)}{u_{tt}^{1 - k} \sigma_k (E_u)} + \eta'' \frac{4 u_t^2 u_{tt} \sigma_k^{ij} (E_u) u_{ti} u_{tj}}{\sigma_k(E_u)} \\
& + \frac{4 (n - k + 1) \gamma \eta'' u_t^2 u_{tt} \sigma_{k - 1} (E_u) |\nabla u_t|^2}{\sigma_k (E_u)} - \frac{ 8 \eta'' u_t^2 \sigma_k^{ij} (E_u) u_{ti} u_{tj} u_{tt}}{\sigma_k (E_u)} \\
=  & \frac{\psi_{tt}}{\psi} - \frac{\psi_t^2}{\psi^2} - 2 s \frac{ u_{tt} \sigma_k^{ij}(E_u)}{\sigma_k (E_u)} u _{it} u_{jt} + r |\nabla u_t|^2 \frac{(n - k + 1) u_{tt} \sigma_{k - 1} (E_u)}{\sigma_k (E_u)} \\
+ & \eta' \bigg( \frac{2 u_t \psi_t}{\psi} + 2 u_{tt} + \frac{2 \gamma |\nabla u_t|^2 (n - k + 1) \sigma_{k - 1} (E_u)}{\sigma_k (E_u)} \bigg) \\
& + 4 \eta'' u_t^2 u_{tt} + \frac{4 (n - k + 1) \gamma \eta'' u_t^2 u_{tt} \sigma_{k - 1} (E_u) |\nabla u_t|^2}{\sigma_k (E_u)}.
\end{aligned}
\end{equation}

{\bf The case when $r > 0$ and $2 s k \leq r n$.} We choose $\eta(v) = v$. Also, we may assume that $E_u = \text{diag} (\lambda_1, \ldots, \lambda_n)$ to prove that

\begin{equation} \label{eq5-17}
\begin{aligned}
& - 2 s \frac{ u_{tt} \sigma_k^{ij}(E_u)}{\sigma_k (E_u)} u _{it} u_{jt} + r |\nabla u_t|^2 \frac{(n - k + 1) u_{tt} \sigma_{k - 1} (E_u)}{\sigma_k (E_u)} \\
= & \bigg( r (n - k + 1) \sum\limits_{i} u_{it}^2 \Big( \sigma_{k - 1} (\lambda | i) + \lambda_i \sigma_{k - 2}(\lambda | i) \Big) - 2 s \sum\limits_i \sigma_{k - 1}(\lambda | i) u _{it}^2 \bigg) \frac{u_{tt}}{\sigma_k (E_u)} \\
= & \sum\limits_{i} u_{it}^2 \bigg( r (n - k + 1) \Big( \sigma_{k - 1} (\lambda | i) + \lambda_i \sigma_{k - 2}(\lambda | i) \Big) - 2 s  \sigma_{k - 1}(\lambda | i) \bigg) \frac{u_{tt}}{\sigma_k (E_u)} \\
\geq & \sum\limits_{i} u_{it}^2 \bigg( \Big( r (n - k + 1) - 2 s \Big) \sigma_{k - 1} (\lambda | i) + r (n - k + 1) \frac{ - \sigma_k (\lambda | i)}{\sigma_{k - 1} (\lambda | i)} \sigma_{k - 2}(\lambda | i)  \bigg) \frac{u_{tt}}{\sigma_k (E_u)} \\
\geq & \sum\limits_{i} u_{it}^2 \bigg( \Big( r (n - k + 1) - 2 s \Big) \sigma_{k - 1} (\lambda | i) - r \frac{(n - k)(k - 1)}{k} \sigma_{k - 1} (\lambda | i) \bigg) \frac{u_{tt}}{\sigma_k (E_u)} \\
= & \sum\limits_{i} u_{it}^2 \bigg( \frac{r n}{k} - 2 s  \bigg) \sigma_{k - 1} (\lambda | i) \frac{u_{tt}}{\sigma_k (E_u)}.
\end{aligned}
\end{equation}
For the last inequality, we have applied Newton-Maclaurin inequality. By requiring that
\begin{equation} \label{eq5-20}
 2 s k \leq r n,
\end{equation}
we know that the above inequality is nonnegative. Hence \eqref{eq5-11} reduces to

\begin{equation} \label{eq5-12}
\begin{aligned}
& \mathbb{L} \Big( u_{tt} + \eta( u_t^2 ) \Big)
\geq  \frac{\psi_{tt}}{\psi} - \frac{\psi_t^2}{\psi^2}
+   \frac{2 u_t \psi_t}{\psi} + 2 u_{tt} .
\end{aligned}
\end{equation}

{\bf The case when $\gamma > 0$.}
We may choose $\eta(v) = \frac{\lambda}{2} v^2$, where $\lambda > 0$ is a constant to be chosen later.
\begin{equation*}
\begin{aligned}
& \mathbb{L} \Big( u_{tt} + \eta( u_t^2 ) \Big) \\
\geq & \frac{\psi_{tt}}{\psi} - \frac{\psi_t^2}{\psi^2} - \big( 2 |s| + |r| \big) \frac{ u_{tt} (n - k + 1) \sigma_{k - 1} (E_u)}{\sigma_k (E_u)} |\nabla u_t|^2 \\
+ & \lambda u_t^2 \bigg( \frac{2 u_t \psi_t}{\psi} + 2 u_{tt} + \frac{2 \gamma |\nabla u_t|^2 (n - k + 1) \sigma_{k - 1} (E_u)}{\sigma_k (E_u)} \bigg) \\
& + 4 \lambda u_t^2 u_{tt} + \frac{4 (n - k + 1) \gamma \lambda u_t^2 u_{tt} \sigma_{k - 1} (E_u) |\nabla u_t|^2}{\sigma_k (E_u)}.
\end{aligned}
\end{equation*}
Now we may choose $\lambda > 0$ sufficiently large so that
\begin{equation} \label{eq5-13}
\begin{aligned}
\mathbb{L} \Big( u_{tt} + \eta( u_t^2 ) \Big)
\geq  \frac{\psi_{tt}}{\psi} - \frac{\psi_t^2}{\psi^2}
+ \lambda u_t^2 \Big( \frac{2 u_t \psi_t}{\psi} + 2 u_{tt}  \Big).
\end{aligned}
\end{equation}

Suppose that $u_{tt} + \eta (u_t^2)$ attains its maximum at $(x_2, t_2) \in M \times (0, 1)$. We may assume that $u_{tt} (x_2, t_2) > 0$ is sufficiently large (otherwise we are done) such that from both \eqref{eq5-12} and \eqref{eq5-13} we can deduce that
\begin{equation*}
\mathbb{L} \Big( u_{tt} + \eta ( u_t^2 ) \Big) (x_2, t_2) > 0 .
\end{equation*}
But this is impossible. We thus obtain the upper bound for $u_{tt}$ on $M \times [0, 1]$.
\end{proof}

Next, we compute $\mathbb{L}( \Delta u )$.
\begin{equation} \label{eq3-5}
\begin{aligned}
\mathbb{L}(\Delta u) = & G^{tt} (R) (\Delta u)_{tt} + 2 G^{ti}(R) (\Delta u)_{ti} + G^{ij}(R) \Big( (\Delta u)_{ij} \\
& + s u_i (\Delta u)_j + s u_j (\Delta u)_i + \big( \gamma \Delta (\Delta u) - r \langle \nabla u, \nabla (\Delta u) \rangle \big) \delta_{ij} \Big).
\end{aligned}
\end{equation}

Taking covariant derivative of \eqref{eq3} in the $e_p$ direction we obtain
\[ G^{IJ}(R) r_{IJ, p} = G^{tt}(R) u_{ttp} + 2 G^{ti} (R) u_{tip} + G^{ij} (R) \big( W_{ij} [u] \big)_p = \frac{\psi_p}{\psi}. \]
Differentiating again we have
\begin{equation} \label{eq3-6}
\begin{aligned}
& G^{IJ, KL} (R) r_{IJ, p} r_{KL, p} + G^{tt} (R) \Delta (u_{tt}) + 2 G^{ti} (R) \Delta (u_{ti}) \\
& + G^{ij} (R) \Delta \big( W_{ij}[u] \big)
=  \frac{\Delta \psi}{\psi} - \frac{|\nabla \psi|^2}{\psi^2},
\end{aligned}
\end{equation}
where
\[  \begin{aligned}
\big( W_{ij}[u] \big)_p = & u_{ijp} + s u_{ip} u_j + s u_i u_{jp} + \Big( \gamma (\Delta u)_p - r u_k u_{k p} \Big) \delta_{ij} + A_{ij, p}, \\
\Delta \big( W_{ij}[u] \big) = & \Delta (u_{ij}) + s \Delta (u_i) u_j + 2 s u_{ip} u_{jp} + s u_i \Delta (u_j) \\
& + \Big( \gamma \Delta (\Delta u) - r u_k \Delta (u_k) - r |\nabla^2 u|^2 \Big) \delta_{ij} + \Delta (A_{ij}).
\end{aligned} \]
In view of \eqref{eq3-6}, \eqref{eq2-28}, \eqref{eq2-29} as well as
\[ \begin{aligned}
\nabla_{ijkl} u = & \nabla_{klij} u + R_{kjl}^m \nabla_{im} u + \nabla_i R_{kjl}^m \nabla_m u + R_{kil}^m \nabla_{mj} u \\
& + R_{kij}^m \nabla_{lm} u + R_{lij}^m \nabla_{km} u + \nabla_k R_{lij}^m \nabla_m u,
\end{aligned} \]
which implies that
\[ \begin{aligned}
\nabla_{ji} ( \Delta u ) = & \Delta ( \nabla_{ji} u ) + R_{lil}^m \nabla_{jm} u + \nabla_j R_{lil}^m \nabla_m u + R_{ljl}^m \nabla_{mi} u \\
& + 2 R_{lji}^m \nabla_{lm} u + \nabla_l R_{lji}^m \nabla_m u,
\end{aligned} \]
\eqref{eq3-5} can be expressed as
\begin{equation} \label{eq3-7}
\begin{aligned}
\mathbb{L}(\Delta u) = & \frac{\Delta \psi}{\psi} - \frac{|\nabla \psi|^2}{\psi^2}  -  G^{IJ, KL} (R) r_{IJ, m} r_{KL, m} \\
& - 2 G^{ti}(R) R_{ill}^m u_{t m} + G^{ij}(R) \Big( R_{ljl}^m u_{mi} + R_{lil}^m u_{mj} + 2 R_{lji}^m u_{ml} \\
& + R_{lil, j}^m u_m + R_{lji, l}^m u_m - s R_{ill}^m u_m u_j - s u_i R_{jll}^m u_m - 2 s u_{im} u_{jm} \\
& + \big( r u_k R_{kll}^m u_m + r |\nabla^2 u|^2 \big) \delta_{ij} - \Delta (A_{ij}) \Big).
\end{aligned}
\end{equation}

\begin{thm} \label{Thm2}
Let $u$ be an admissible solution to \eqref{eq1}--\eqref{eq1-4}. Suppose that (i) $\gamma > 0$ or (ii) $r > 0$ and $2 s k \leq r n$. We have the estimate
\[ \max\limits_{M \times [0, 1]} \Delta u \leq C. \]
\end{thm}
\begin{proof}
Let $u \in C^4 \big( M \times (0, 1) \big) \cap C^2 \big( M \times [0, 1] \big)$ be an admissible solution of \eqref{eq3}.
In view of \eqref{eq3-1} and the concavity of $G$, \eqref{eq3-7} can be estimated as
\begin{equation} \label{eq3-14}
\begin{aligned}
\mathbb{L} (\Delta u) \geq & \frac{\Delta \psi}{\psi} - \frac{|\nabla \psi|^2}{\psi^2} - C \frac{\sigma_{k - 1} (E_u)}{\sigma_k (E_u)} |\nabla u_t|^2 - C \frac{u_{tt} \sigma_{k - 1} (E_u)}{\sigma_k (E_u)} \Big( |\nabla^2 u| + 1 \Big) \\
& + \Big( r |\nabla^2 u|^2 (n - k + 1) \sigma_{k - 1}(E_u) - 2 s \sigma_k^{ij} (E_u) u_{im} u_{jm} \Big) \frac{u_{tt}}{\sigma_k (E_u)}.
\end{aligned}
\end{equation}
Since $\lambda (E_u) \in \Gamma_k \subset \Gamma_1$, we know that
\begin{equation*}
u_{tt} \text{tr} W[u] - |\nabla u_t|^2 > 0.
\end{equation*}
Hence
\begin{equation} \label{eq3-13}
|\nabla u_t|^2 \leq u_{tt} \Big( (1 + \gamma n) \Delta u  + C \Big)  \leq C u_{tt} \Big( |\nabla^2 u|  + 1 \Big) .
\end{equation}
Consequently, \eqref{eq3-14} reduces to
\begin{equation} \label{eq3-15}
\begin{aligned}
\mathbb{L} (\Delta u) \geq & \frac{\Delta \psi}{\psi} - \frac{|\nabla \psi|^2}{\psi^2} - C \frac{u_{tt} \sigma_{k - 1} (E_u)}{\sigma_k (E_u)} \Big( |\nabla^2 u| + 1 \Big) \\
& + \Big( r |\nabla^2 u|^2 (n - k + 1) \sigma_{k - 1}(E_u) - 2 s \sigma_k^{ij} (E_u) u_{im} u_{jm} \Big) \frac{u_{tt}}{\sigma_k (E_u)}.
\end{aligned}
\end{equation}

Now we consider the test function $\Delta u + \lambda |\nabla u|^2 + \mu t (t - 1)$, where $\lambda$, $\mu$ are nonnegative constants to be chosen later.

By \eqref{eq2-20} and \eqref{eq3-8},
\begin{equation} \label{eq3-17}
\begin{aligned}
\mathbb{L} \Big( t (t - 1) \Big)
= \frac{ 2 \sigma_k \big( W[u] \big)}{u_{tt}^{1 - k} \sigma_k (E_u)}.
\end{aligned}
\end{equation}

\vspace{2mm}

{\bf The case when $r > 0$, $s > 0$, $2 s k \leq r n$ and $0 \leq \gamma \leq \frac{1}{2 n}$.}

\vspace{2mm}

Taking \eqref{eq2-14} into \eqref{eq2-12} and by \eqref{eq3-8}, we have

\begin{equation} \label{eq5-15}
\begin{aligned}
& \mathbb{L} \big( |\nabla u|^2 \big) \\
\geq & \frac{ 2 u_l \psi_l}{\psi} + \frac{2 u_l u_{tt} \sigma_k^{ij} (E_u) (R_{lji}^m u_m + \gamma R_{lmm}^s u_s \delta_{ij} - A_{ij, l}) }{\sigma_k (E_u)} + \frac{2 |\nabla u_t|^2 }{u_{tt}} \\
& + \frac{ 2 \sigma_k^{ij}(E_u) }{\sigma_k (E_u)} \bigg( u_{tt} u_{l i} u_{l j}  - u_{ti} u_{lj} u_{lt} - u_{li} u_{lt} u_{tj} + \frac{u_{ti} u_{tj} |\nabla u_t|^2}{u_{tt}} \bigg) \\
\geq & \frac{ 2 u_l \psi_l}{\psi}  - C  u_{tt} \frac{\sigma_{k - 1} (E_u)}{\sigma_k (E_u)} + \frac{2 |\nabla u_t|^2 }{u_{tt}} \\
& + \frac{ 2 \sigma_k^{ij}(E_u) }{\sigma_k (E_u)} \bigg( u_{tt} u_{l i} u_{l j}  - u_{ti} u_{lj} u_{lt} - u_{li} u_{lt} u_{tj} + \frac{u_{ti} u_{tj} |\nabla u_t|^2}{u_{tt}} \bigg) .
\end{aligned}
\end{equation}
By \eqref{eq3-15}, \eqref{eq5-15} and \eqref{eq3-17}, we have
\begin{equation} \label{eq5-16}
\begin{aligned}
& \mathbb{L} \Big( \Delta u + \lambda |\nabla u|^2 + \mu t (t - 1) \Big) \\
\geq & \frac{\Delta \psi}{\psi} - \frac{|\nabla \psi|^2}{\psi^2} - C \frac{u_{tt} \sigma_{k - 1} (E_u)}{\sigma_k (E_u)} \Big( |\nabla^2 u| + 1 \Big) \\
& + \Big( r |\nabla^2 u|^2 (n - k + 1) \sigma_{k - 1}(E_u) - 2 s \sigma_k^{ij} (E_u) u_{im} u_{jm} \Big) \frac{u_{tt}}{\sigma_k (E_u)} \\
& + \frac{ 2 \mu \sigma_k \big( W[u] \big)}{u_{tt}^{1 - k} \sigma_k (E_u)} + \frac{ 2 \lambda u_l \psi_l}{\psi}  - C \lambda  u_{tt} \frac{\sigma_{k - 1} (E_u)}{\sigma_k (E_u)} + \frac{2 \lambda |\nabla u_t|^2 }{u_{tt}} \\
& + \frac{ 2 \lambda \sigma_k^{ij}(E_u) }{\sigma_k (E_u)} \bigg( u_{tt} u_{l i} u_{l j}  - u_{ti} u_{lj} u_{lt} - u_{li} u_{lt} u_{tj} + \frac{u_{ti} u_{tj} |\nabla u_t|^2}{u_{tt}} \bigg).
\end{aligned}
\end{equation}

We choose $\mu = \mu_1 + \mu_2$, where
\[ \mu_1 = \frac{1}{2} \max\limits_{M \times [0, 1]} \frac{|\nabla u_t|^2}{u_{tt}} \]
as defined in \cite{HeXuZhang}, and $\mu_2$ is a positive constant to be chosen later.

We notice that
\begin{equation*}
\begin{aligned}
\frac{\sigma_k \big( W[u] \big)}{u_{tt}^{1 - k} \sigma_k (E_u)}
\geq \frac{\sigma_k^{ij} \big( W[u] \big) u_{ti} u_{tj}}{u_{tt}^{2 - k} \sigma_k (E_u)}
= \frac{\sigma_k^{ij} \big( E_u \big) u_{ti} u_{tj}}{u_{tt} \sigma_k (E_u)} .
\end{aligned}
\end{equation*}
By Theorem \ref{Thm1} we know that $u_{tt} \leq C_0$. We may require that $C_0 \geq \frac{1}{2 s}$ if $r > 0$ and $s > 0$. Then we have

\begin{equation} \label{eq5-18}
\begin{aligned}
& \frac{2 \mu_1 \sigma_k \big( W[u] \big)}{u_{tt}^{1 - k} \sigma_k (E_u)} + \frac{ 2 \lambda \sigma_k^{ij}(E_u) }{\sigma_k (E_u)} \bigg( u_{tt} u_{l i} u_{l j} - u_{ti} u_{lj} u_{lt} - u_{li} u_{lt} u_{tj} + \frac{u_{ti} u_{tj} |\nabla u_t|^2}{u_{tt}} \bigg) \\
\geq & \frac{ 2 \lambda \sigma_k^{ij}(E_u) }{\sigma_k (E_u)} \bigg( u_{tt} u_{l i} u_{l j} - u_{ti} u_{lj} u_{lt} - u_{li} u_{lt} u_{tj} + \Big( 1 + \frac{1}{2 \lambda C_0} \Big) \frac{u_{ti} u_{tj} |\nabla u_t|^2}{u_{tt}} \bigg) \\
\geq &  \frac{ 2 \lambda }{2 \lambda C_0 + 1} \frac{u_{tt} \sigma_k^{ij}(E_u) u_{l i} u_{l j}}{\sigma_k (E_u)}.
\end{aligned}
\end{equation}

Similar to \eqref{eq5-17}, we may assume that $E_u = \text{diag} (\lambda_1, \ldots, \lambda_n)$ and obtain
\begin{equation} \label{eq5-19}
\begin{aligned}
&  \frac{k}{n} |\nabla^2 u|^2 (n - k + 1) \sigma_{k - 1}(E_u) - \sigma_k^{ij} (E_u) u_{im} u_{jm}  \\
= &  \frac{k}{n} (n - k + 1) \sum\limits_{im} u_{im}^2 \Big( \sigma_{k - 1}(\lambda | i) + \lambda_i \sigma_{k - 2} (\lambda | i) \Big) - \sum\limits_{im} \sigma_{k - 1}(\lambda | i) u_{im}^2 \\
= & \sum\limits_{im} u_{im}^2  \bigg( \Big( \frac{k}{n} (n - k + 1) - 1 \Big) \sigma_{k - 1}(\lambda | i) +  \frac{k}{n} (n - k + 1) \lambda_i \sigma_{k - 2} (\lambda | i)  \bigg) \\
\geq & \sum\limits_{im} \bigg( \Big(   \frac{k}{n} (n - k + 1) - 1 \Big) \sigma_{k - 1}(\lambda | i) - \frac{(n - k)(k - 1)}{n} \sigma_{k - 1} (\lambda | i)  \bigg) u_{im}^2 = 0.
\end{aligned}
\end{equation}
By \eqref{eq5-18}, \eqref{eq5-19} and \eqref{eq5-20}, we know that
\begin{equation*}
\begin{aligned}
& \Big( r |\nabla^2 u|^2 (n - k + 1) \sigma_{k - 1}(E_u) - 2 s \sigma_k^{ij} (E_u) u_{im} u_{jm} \Big) \frac{u_{tt}}{\sigma_k (E_u)} \\
& + \frac{ 2 \mu_1 \sigma_k \big( W[u] \big)}{u_{tt}^{1 - k} \sigma_k (E_u)} + \frac{ 2 \lambda \sigma_k^{ij}(E_u) }{\sigma_k (E_u)} \bigg( u_{tt} u_{l i} u_{l j}  - u_{ti} u_{lj} u_{lt} - u_{li} u_{lt} u_{tj} + \frac{u_{ti} u_{tj} |\nabla u_t|^2}{u_{tt}} \bigg) \\
\geq &  \bigg( r |\nabla^2 u|^2 (n - k + 1) \sigma_{k - 1}(E_u) - \Big( 2 s - \frac{ 2 \lambda }{2 \lambda C_0 + 1} \Big) \sigma_k^{ij}(E_u) u_{l i} u_{l j} \bigg) \frac{u_{tt}}{\sigma_k (E_u)} \\
\geq &  \bigg( r - \frac{k}{n} \Big( 2 s - \frac{ 2 \lambda }{2 \lambda C_0 + 1} \Big) \bigg) |\nabla^2 u|^2 (n - k + 1) \sigma_{k - 1}(E_u) \frac{u_{tt}}{\sigma_k (E_u)} \\
\geq &   \frac{k (n - k + 1)}{n} \frac{ 2 \lambda }{2 \lambda C_0 + 1}  |\nabla^2 u|^2 \sigma_{k - 1}(E_u) \frac{u_{tt}}{\sigma_k (E_u)} .
\end{aligned}
\end{equation*}
Hence \eqref{eq5-16} reduces to
\begin{equation} \label{eq5-21}
\begin{aligned}
& \mathbb{L} \Big( \Delta u + \lambda |\nabla u|^2 + \mu t (t - 1) \Big) \\
\geq & \frac{\Delta \psi}{\psi} - \frac{|\nabla \psi|^2}{\psi^2} - C \frac{u_{tt} \sigma_{k - 1} (E_u)}{\sigma_k (E_u)} \Big( |\nabla^2 u| + 1 \Big)  \\
& + \frac{ 2 \mu_2 \sigma_k \big( W[u] \big)}{u_{tt}^{1 - k} \sigma_k (E_u)} + \frac{ 2 \lambda u_l \psi_l}{\psi}  - C \lambda  u_{tt} \frac{\sigma_{k - 1} (E_u)}{\sigma_k (E_u)} \\
& + \frac{k (n - k + 1)}{n} \frac{ 2 \lambda }{2 \lambda C_0 + 1}  |\nabla^2 u|^2 \sigma_{k - 1}(E_u) \frac{u_{tt}}{\sigma_k (E_u)} .
\end{aligned}
\end{equation}
By \eqref{eq4} we have
\begin{equation}  \label{eq5-22}
\frac{\sigma_k \big( W[u] \big)}{u_{tt}^{1 - k} \sigma_k (E_u)} = u_{tt}^{- 1} \frac{ u_{tt} \sigma_k \big( W[u] \big)}{u_{tt}^{1 - k} \sigma_k (E_u)} \geq C_0^{- 1}.
\end{equation}
Choosing $\lambda = 1$ and $\mu_2$ sufficiently large such that \eqref{eq5-21} reduces to
\begin{equation} \label{eq5-23}
\begin{aligned}
& \mathbb{L} \Big( \Delta u + \lambda |\nabla u|^2 + \mu t (t - 1) \Big) \\
\geq &  \frac{u_{tt} \sigma_{k - 1} (E_u)}{\sigma_k (E_u)} \bigg( \frac{k (n - k + 1)}{n} \frac{ 2 }{2 C_0 + 1} |\nabla^2 u|^2 - C \Big( |\nabla^2 u| + 1 \Big) \bigg).
\end{aligned}
\end{equation}

Suppose that $\Delta u + \lambda |\nabla u|^2 + \mu t (t - 1)$ attains its maximum at $(x_3, t_3) \in M \times (0, 1)$. If
\begin{equation} \label{eq5-24}
|\nabla^2 u| (x_3, t_3) > \frac{C + \sqrt{C^2 + 4 \frac{k (n - k + 1)}{n} \frac{ 2 }{2 C_0 + 1} C}}{2 \frac{k (n - k + 1)}{n} \frac{ 2 }{2 C_0 + 1} } := C_1 ,
\end{equation}
then
\begin{equation*}
\bigg( \frac{k (n - k + 1)}{n} \frac{ 2 }{2 C_0 + 1} |\nabla^2 u|^2 - C \Big( |\nabla^2 u| + 1 \Big) \bigg) (x_3, t_3) > 0,
\end{equation*}
or equivalently
\begin{equation*}
\mathbb{L} \Big( \Delta u + \lambda |\nabla u|^2 + \mu t (t - 1) \Big) (x_3, t_3) > 0 .
\end{equation*}
This is impossible. If
\begin{equation*}
|\nabla^2 u| (x_3, t_3) \leq C_1,
\end{equation*}
then on $M \times [0, 1]$,
\begin{equation*}
 \Delta u - \mu \leq \Big( \Delta u + \lambda |\nabla u|^2 + \mu t (t - 1) \Big) (x_3, t_3) \leq \sqrt{n} C_1  + \lambda C .
\end{equation*}
By \eqref{eq3-13} we can further obtain
\begin{equation*}
 \sqrt{n} C_1  + \lambda C + \mu \geq \Delta u  \geq \frac{|\nabla u_t|^2}{(1 + \gamma n) u_{tt}} - C .
\end{equation*}
In particular we have
\begin{equation*}
 \sqrt{n} C_1  + \lambda C + \mu_1 + \mu_2  \geq \frac{2 \mu_1}{1 + \gamma n} - C \geq \frac{4 \mu_1}{3} - C .
\end{equation*}
We thus obtain a uniform upper bound for $\mu_1$, which implies an upper bound for $\Delta u$ on $M \times [0, 1]$. Since $\lambda \big( W[u] \big) \in \Gamma_k \subset \Gamma_1$, we know that $\Delta u \geq - C$.

\vspace{2mm}

{\bf The case when $r > 0$ and $s \leq 0$.}

\vspace{2mm}

We choose $\lambda = 0$ and $\mu$ sufficiently large such that \eqref{eq5-16} reduces to
\begin{equation} \label{eq5-25}
\begin{aligned}
& \mathbb{L} \Big( \Delta u + \lambda |\nabla u|^2 + \mu t (t - 1) \Big) \\
\geq & - C \frac{u_{tt} \sigma_{k - 1} (E_u)}{\sigma_k (E_u)} \Big( |\nabla^2 u| + 1 \Big)  +  r |\nabla^2 u|^2 (n - k + 1) \sigma_{k - 1}(E_u) \frac{u_{tt}}{\sigma_k (E_u)} .
\end{aligned}
\end{equation}

Suppose that $\Delta u + \lambda |\nabla u|^2 + \mu t (t - 1)$ attains its maximum at $(x_3, t_3) \in M \times (0, 1)$. We may assume that $|\nabla^2 u| (x_3, t_3)$ is sufficiently large (otherwise we are done) such that from \eqref{eq5-25} we can deduce that
\begin{equation*}
\mathbb{L} \Big( \Delta u + \lambda |\nabla u|^2 + \mu t (t - 1) \Big) (x_3, t_3) > 0 .
\end{equation*}
But this is impossible. We thus obtain an upper bound for $\Delta u$ on $M \times [0, 1]$.

\vspace{2mm}

{\bf The case when $\gamma > 0$.}

\vspace{2mm}

Taking \eqref{eq2-23}, \eqref{eq2-14} into \eqref{eq2-12} and by \eqref{eq3-8}, we have
\begin{equation} \label{eq3-12}
\begin{aligned}
& \mathbb{L} \big( |\nabla u|^2 \big)
\geq \frac{2 u_l \psi_l}{\psi} + \frac{2 (n - k + 1) \gamma u_{tt} \sigma_{k - 1} (E_u) |\nabla^2 u|^2}{\sigma_k (E_u)} \\
& + 2 u_l  u_{tt} \frac{\sigma_k^{ij} (E_u) (R_{lji}^m u_m + \gamma R_{lmm}^s u_s \delta_{ij} - A_{ij, l}) }{\sigma_k (E_u)}+ 2 u_{tt}^{- 1} |\nabla u_t|^2 \\
\geq & \frac{2 u_l \psi_l}{\psi} + \frac{2 (n - k + 1) \gamma u_{tt} \sigma_{k - 1} (E_u) |\nabla^2 u|^2}{\sigma_k (E_u)} - C  u_{tt} \frac{\sigma_{k - 1} (E_u)}{\sigma_k (E_u)} .
\end{aligned}
\end{equation}

By \eqref{eq3-15}, \eqref{eq3-12} and \eqref{eq3-17},

\begin{equation} \label{eq5-14}
\begin{aligned}
& \mathbb{L} \Big( \Delta u + \lambda |\nabla u|^2 + \mu t (t - 1) \Big) \\
\geq & \frac{\Delta \psi}{\psi} - \frac{|\nabla \psi|^2}{\psi^2} - C \frac{u_{tt} \sigma_{k - 1} (E_u)}{\sigma_k (E_u)} \Big( |\nabla^2 u| + 1 \Big) \\
& + \Big( r |\nabla^2 u|^2 (n - k + 1) \sigma_{k - 1}(E_u) - 2 s \sigma_k^{ij} (E_u) u_{im} u_{jm} \Big) \frac{u_{tt}}{\sigma_k (E_u)} \\
& + \frac{2 \lambda u_l \psi_l}{\psi} + \frac{2 (n - k + 1) \lambda \gamma u_{tt} \sigma_{k - 1} (E_u) |\nabla^2 u|^2}{\sigma_k (E_u)} - \frac{C  \lambda  u_{tt} \sigma_{k - 1} (E_u)}{\sigma_k (E_u)} \\
& + \frac{ 2 \mu \sigma_k \big( W[u] \big)}{u_{tt}^{1 - k} \sigma_k (E_u)}.
\end{aligned}
\end{equation}

We may choose $\lambda$ sufficiently large so that
\begin{equation} \label{eq3-18}
\begin{aligned}
& \mathbb{L} \Big( \Delta u + \lambda |\nabla u|^2 + \mu t (t - 1) \Big) \\
\geq & \frac{\Delta \psi}{\psi} - \frac{|\nabla \psi|^2}{\psi^2} + \frac{2 \lambda u_l \psi_l}{\psi} + \Big( (n - k + 1) \gamma \lambda |\nabla^2 u|^2 - C |\nabla^2 u| \\
& - C ( 1 + \lambda ) \Big) \frac{u_{tt} \sigma_{k - 1} (E_u)}{\sigma_k (E_u)} + \frac{ 2 \mu \sigma_k \big( W[u] \big)}{u_{tt}^{1 - k} \sigma_k (E_u)}.
\end{aligned}
\end{equation}
In view of \eqref{eq5-22}, we can choose $\mu$ sufficiently large so that \eqref{eq3-18} reduces to
\begin{equation} \label{eq3-19}
\begin{aligned}
& \mathbb{L} \Big( \Delta u + \lambda |\nabla u|^2 + \mu t (t - 1) \Big) \\
\geq & \Big( (n - k + 1) \gamma \lambda |\nabla^2 u|^2 - C |\nabla^2 u| - C ( 1 + \lambda ) \Big) \frac{u_{tt} \sigma_{k - 1} (E_u)}{\sigma_k (E_u)}.
\end{aligned}
\end{equation}

Suppose that $\Delta u + \lambda |\nabla u|^2 + \mu t (t - 1)$ attains its maximum at $(x_3, t_3) \in M \times (0, 1)$. We may assume that $|\nabla^2 u| (x_3, t_3)$ is sufficiently large (otherwise we are done) such that from \eqref{eq3-19} we can deduce that
\begin{equation*}
\mathbb{L} \Big( \Delta u + \lambda |\nabla u|^2 + \mu t (t - 1) \Big) (x_3, t_3) > 0 .
\end{equation*}
But this is impossible. We thus obtain an upper bound for $\Delta u$ on $M \times [0, 1]$.

\end{proof}

For $k \geq 2$, by the relation that
\[ \big| W[u] \big|^2 = \sigma_1^2 \big( W[u] \big) - 2 \sigma_2 \big( W[u] \big) \leq  \sigma_1^2 \big( W[u] \big) , \]
we obtain a bound for $|\nabla^2 u|$ on $M \times [0, 1]$.

Finally, by the fact that $\lambda(E_u) \in \Gamma_k \subset \Gamma_1$, we have
\[ u_{tt} \sigma_1 \big( W[u] \big) - |\nabla u_t|^2 > 0, \]
and we therefore obtain a bound for $|\nabla u_t|$ on $M \times [0, 1]$.

\vspace{4mm}

\section{Existence}

\vspace{4mm}

We shall use standard continuity method to prove Theorem \ref{Theorem1}. To start the continuity process, we need to construct an admissible function $w(x, t)$ which satisfies $w(x, 0) = u_0$ and $w(x, 1) = u_1$.
For this, we shall first establish Theorem \ref{Thm4}.

{\bf Proof of Theorem \ref{Thm4}.}
First, it is obvious to see that for $t = 0$, $u_0$ is the unique solution to \eqref{eq5-1}.
Also, the linearized operator with respect to the spacial variable $x$ associated to \eqref{eq5-1} is
invertible so that we can apply implicit function theorem to prove the openness of the set of $t \in [0, 1]$ at which \eqref{eq5-1} has an admissible solution $u(\cdot, t)$. The closedness can be established once we are able to derive $C^2$ estimates with respect to the spatial variable $x$. Then the existence of solution $u(x, t)$ to \eqref{eq5-1} for any $t \in [0, 1]$ can be obtained by continuity method, which can be further proved to be smooth with respect to $x$ by Evans-Krylov theory \cite{Evans, Krylov} and classical Schauder theory. In addition, we are sure that $u(\cdot, 1) = u_1$ by the uniqueness of solution to \eqref{eq5-1} for any $t \in [0, 1]$.

To give an upper bound for $u$,
assume that $u$ attains an interior maximum at $(x_0, t_0) \in M \times (0, 1)$. Then at $(x_0, t_0)$,
\[ \nabla u = 0, \quad \nabla^2 u \leq 0. \]
It follows that at $(x_0, t_0)$,
\[ \sigma_k \big( W[u] \big) \leq \sigma_k (A). \]
Consequently, at $(x_0, t_0)$,
\[ C^{- 1} \leq \psi =  e^{- 2 k u} \sigma_k \big( W[u] \big) \leq  e^{- 2 k u} \sigma_k (A) \leq C  e^{- 2 k u} . \]
We thus obtain an upper bound for $u(x_0, t_0)$ and consequently for $u$.

To give a lower bound for $u$,
assume that $u$ attains an interior minimum at $(x_1, t_1) \in M \times (0, 1)$. Then at $(x_1, t_1)$,
\[ \nabla u = 0, \quad \nabla^2 u \geq 0. \]
It follows that at $(x_1, t_1)$,
\[ \sigma_k \big( W[u] \big) \geq \sigma_k (A). \]
Consequently, at $(x_1, t_1)$,
\[ C \geq \psi =  e^{- 2 k u} \sigma_k \big( W[u] \big) \geq  e^{- 2 k u} \sigma_k (A) \geq C^{- 1}  e^{- 2 k u} . \]
We thus obtain a lower bound for $u(x_1, t_1)$ and consequently for $u$.

Recall that the global estimates for $|\nabla u|$ and $|\nabla^2 u|$ have been derived in Guan \cite{Guan08}. Hence we are able to obtain $C^2$ estimates with respect to the spatial variable $x$ for \eqref{eq5-1}.

In order to prove $u(x, t)$ to be smooth with respect to $(x, t)$, set
\[ w = \frac{u(x, t + \tau) - u(x, t)}{\tau}, \quad \tau \in \mathbb{R}. \]
Since
\[ \sigma_k \big( W[u] \big)(x, t) = e^{2 k u(x, t)} \psi(x, t)   \]
and
\[ \sigma_k \big( W[u] \big)(x, t + \tau) = e^{2 k u(x, t + \tau)} \psi(x, t + \tau) ,  \]
taking the difference and divided by $\tau$ yields
\begin{equation} \label{eq5-9}
\begin{aligned}
& \int_0^1 \sigma_k^{ij} \Big( (1 - \theta) W[u] (x, t) + \theta W[u] (x, t + \tau) \Big) d \theta \cdot \bigg( w_{ij} + \gamma \Delta w \delta_{ij} \\
& + s u_i (x, t + \tau) w_j + s u_j (x, t) w_i - \frac{r}{2} \Big( \nabla u (x, t + \tau) \cdot \nabla w + \nabla w \cdot \nabla u (x, t) \Big) \delta_{ij} \bigg) \\
= & 2 k w  \int_0^1 e^{2 k \big( (1 - \theta) u(x, t) + \theta u (x, t + \tau) \big)} \psi(x, t + \theta \tau) d \theta \\
& + \int_0^1 e^{2 k \big( (1 - \theta) u(x, t) + \theta u (x, t + \tau) \big)} \psi_t (x, t + \theta \tau) d \theta,
\end{aligned}
\end{equation}
which is a second order linear uniformly elliptic equation with respect to $x$. We may write it in the form
\[ \mathcal{T} (w) := a_{ij} w_{ij} + b_i w_i + c w = f. \]
To give an upper bound for $w$, we consider the test function
\[ \Phi_1 = w - c_1, \]
where $c_1$ is a positive constant to be chosen.
\[ \mathcal{T} ( \Phi_1 ) = f - c c_1. \]
Choosing $c_1$ sufficiently large depending on $\inf \psi$, we obtain
\[ \mathcal{T} ( \Phi_1 ) \geq 0, \]
which implies that
\[ \Phi_1 \leq 0 \quad \text{on} \quad  M \times [0, 1]. \]
We thus obtain an upper bound for $w$ on $M \times [0, 1]$.
To give a lower bound for $w$, we consider the test function
\[ \Phi_2 = w + c_2, \]
where $c_2 > 0$ is a sufficiently large constant depending on $\inf \psi$ such that
\[ \mathcal{T} ( \Phi_2 ) \leq 0. \]
We thus obtain a lower bound for $w$.
Now, since we have obtained a uniform bound (independent of $u$, independent of $\tau$, independent of $t$) for $w$, by Schauder interior estimate (see for instance \cite{GT}) we can infer that the set of functions $w$ and their first and second covariant derivatives $w_i$, $w_{ij}$ (i, j = 1, \ldots, n), are uniformly bounded and equicontinuous on $M$. Since $w \rightarrow u_t$ on $M$ as $\tau \rightarrow 0$, possibly passing to a subsequence, we may assert that $w_i \rightarrow u_{ti}$, $w_{ij} \rightarrow u_{tij}$ as $\tau \rightarrow 0$. Seeing that $u_t$ is in $C^2 (M)$ and satisfies
\begin{equation} \label{eq5-3}
 \sigma_k^{ij} \big( W[u] \big) \mathcal{M}_{ij} (u_t) = e^{2 k u} (2 k \psi u_t + \psi_t),
\end{equation}
by interior regularity theorem (see for instance \cite{GT}) we can assert that $u_t$ is smooth with respect to $x$.

Next, set
\[ w^{(1)} = \frac{u_t (x, t + \tau) - u_t (x, t)}{\tau}, \quad \tau \in \mathbb{R}, \]
\[ u^{\theta} = (1 - \theta) u(x, t) + \theta u(x, t + \tau), \quad 0 \leq \theta \leq 1, \]
\[ u_t^{\theta} = (1 - \theta) u_t (x, t) + \theta u_t (x, t + \tau). \]
Since
\[ \begin{aligned}
& \sigma_k^{ij} \big( W[u] \big) (x, t) \bigg( u_{tij} (x, t) + s u_i (x, t) u_{tj}(x, t) + s u_{ti}(x, t) u_j(x, t) + \\
& \Big( \gamma \Delta u_t(x, t) - r \nabla u (x, t) \cdot \nabla u_t (x, t) \Big) \delta_{ij} \bigg) = e^{2 k u (x, t)} \Big(2 k \psi (x, t) u_t (x, t) + \psi_t (x, t) \Big)
\end{aligned} \]
and
\[ \begin{aligned}
& \sigma_k^{ij} \big( W[u] \big) (x, t + \tau) \bigg( u_{tij} (x, t + \tau) + s u_i (x, t + \tau) u_{tj}(x, t + \tau) \\
& + s u_{ti}(x, t + \tau) u_j(x, t + \tau) + \Big( \gamma \Delta u_t (x, t + \tau) - r \nabla u (x, t + \tau) \cdot \nabla u_t (x, t + \tau) \Big) \delta_{ij} \bigg)  \\
& = e^{2 k u (x, t + \tau)} \Big(2 k \psi (x, t + \tau) u_t (x, t + \tau) + \psi_t (x, t + \tau) \Big),
\end{aligned} \]
taking the difference and divided by $\tau$ yields
\begin{equation} \label{eq5-10}
\begin{aligned}
& \int_0^1 \sigma_k^{ij} \Big( (1 - \theta) W[u] (x, t) + \theta W[u] (x, t + \tau) \Big) \bigg( w^{(1)}_{ij} + s u^{\theta}_i w^{(1)}_j \\
& + s w^{(1)}_i u^{\theta}_j + \Big( \gamma \Delta w^{(1)} - r \nabla u^{\theta} \cdot \nabla w^{(1)} \Big) \delta_{ij} + s w_i (u_t^{\theta})_j + s (u_t^{\theta})_i w_j \\
& - r \nabla w \cdot \nabla u_t^{\theta} \delta_{ij} \bigg) + \sigma_k^{ij, pq} \Big( (1 - \theta) W[u] (x, t) + \theta W[u] (x, t + \tau) \Big) \cdot \\
&  \bigg( (u_t^{\theta})_{ij} + s u^{\theta}_i (u_t^{\theta})_j + s (u_t^{\theta})_i u^{\theta}_j + \Big( \gamma \Delta u_t^{\theta} - r \nabla u^{\theta} \cdot \nabla u_t^{\theta} \Big) \delta_{ij}  \bigg) \cdot \bigg( w_{pq} + \gamma \Delta w \delta_{pq} \\
& + s u_p (x, t + \tau) w_q + s u_q (x, t) w_p - \frac{r}{2} \Big( \nabla u (x, t + \tau) \cdot \nabla w + \nabla w \cdot \nabla u (x, t) \Big) \delta_{pq} \bigg)  d \theta \\
= & \int_0^1 e^{2 k u^{\theta} } \bigg( 2 k \psi(x, t + \theta \tau) w^{(1)} + \psi_{tt} (x, t + \theta \tau) \\
& + 2 k \psi_t (x, t + \theta \tau)  u_t^{\theta} + 4 k^2 w  \psi (x, t + \theta \tau)  u_t^{\theta} + 2 k w \psi_t (x, t + \theta \tau)  \bigg) d \theta.
\end{aligned}
\end{equation}
We can see that the above equation is again a uniformly elliptic second order linear equation. Applying similar argument as above to \eqref{eq5-3}, we can obtain a uniform bound for $u_t$, which in turn implies a uniform bound for $|u_t|_{C^{2, \alpha}(M)}$ by Schauder interior estimate. Also, we have a uniform bound for $|w|_{C^{2, \alpha}(M)}$ by applying Schauder interior estimate to \eqref{eq5-9}. Applying similar argument as above to \eqref{eq5-10}, we obtain a uniform bound for $w^{(1)}$. Then we obtain a uniform bound for $|w^{(1)}|_{C^{2, \alpha}(M)}$ by applying Schauder interior estimate to \eqref{eq5-10}, which also implies that $w^{(1)}$, $w^{(1)}_i$, $w^{(1)}_{ij}$, $i, j = 1, \ldots, n$ are uniformly bounded and equicontinuous on $M$. Letting $\tau \rightarrow 0$, we can see that $w^{(1)} \rightarrow u_{tt}$. Possibly passing to a subsequence, $w^{(1)}_{i} \rightarrow u_{tti}$ and $w^{(1)}_{ij} \rightarrow u_{ttij}$. Since $u_{tt}$ is in $C^2 (M)$ and satisfies
\begin{equation*}
\begin{aligned}
& \sigma_k^{ij} \big( W[u] \big) \Big( \mathcal{M}_{ij} ( u_{tt} ) + 2 s u_{ti} u_{tj} - r |\nabla u_t|^2 \delta_{ij} \Big) + \sigma_k^{ij, pq} \big( W[u] \big) \mathcal{M}_{ij} ( u_t )  \mathcal{M}_{pq} ( u_t ) \\
= & e^{2 k  u } \Big( 2 k \psi u_{tt} + \psi_{tt} + 4 k^2 u_t^2  \psi + 4 k u_t \psi_t \Big) ,
\end{aligned}
\end{equation*}
$u_{tt}$ must be smooth with respect to $x$ by interior regularity theorem.

Higher order regularity with respect to $t$ follows from the same method as above.
\hfill \qedsymbol

\vspace{4mm}

{\bf Proof of Theorem \ref{Theorem1}.}
For the case when $\gamma > 0$,
by Theorem \ref{Thm4} we know that there exists a smooth solution $v(x, t)$ to
\begin{equation} \label{eq5-5}
\left\{ \begin{aligned}
e^{- 2 k u} \sigma_k \big( W[u] \big) = & (1 - t) e^{- 2 k u_0} \sigma_k \big( W[u_0] \big) + t e^{- 2 k u_1} \sigma_k \big( W[u_1] \big), \\
u(\cdot, 0) = & u_0,  \quad  u(\cdot, 1) = u_1
\end{aligned} \right.
\end{equation}
which satisfies $\lambda\big( W[ v ] \big)(x, t) \in \Gamma_k$ for any $(x, t) \in M \times [0, 1]$.

For the case when $\gamma = 0$ and $r > 0$, if $s = 0$, we can see that
\begin{equation*}
\begin{aligned}
& W \big[ (1 - t) u_0 + t u_1  \big] \\
= &   (1 - t) \nabla^2 u_0 + t \nabla^2 u_1  - \frac{r}{2} \big| (1 - t) \nabla u_0 + t \nabla u_1 \big|^2 g + A  \\
= & (1 - t) W [ u_0 ] + t W [ u_1 ] + \frac{r}{2} (1 - t) t \big| \nabla (u_0 - u_1) \big|^2 g.
\end{aligned}
\end{equation*}
Hence we can choose $v = (1 - t) u_0 + t u_1$ so that
$\lambda\big( W[ v ] \big)(x, t) \in \Gamma_k$ for any $(x, t) \in M \times [0, 1]$.
If $s \neq 0$, let
\[ v = \frac{1}{s} \ln \Big( (1 - t) e^{s u_0} + t e^{s u_1} \Big). \]
By direct calculation,
\begin{equation*}
\begin{aligned}
W [ v ] = & \nabla^2 v + s d v \otimes d v  - \frac{r}{2} |\nabla v|^2 g + A \\
= & \frac{(1 - t) \Big( (1 - t) e^{2 s u_0} + t e^{s (u_0 + u_1)} \Big) W [u_0] + t \Big( t e^{2 s u_1} + ( 1 - t ) e^{s (u_1 + u_0)} \Big) W [u_1]}{\Big( (1 - t) e^{s u_0} + t e^{s u_1} \Big)^2 }
\\
& + \frac{(1 - t) t e^{s (u_0 + u_1)} \frac{r}{2} \big| \nabla (u_0 - u_1) \big|^2 g}{\Big( (1 - t) e^{s u_0} + t e^{s u_1} \Big)^2}.
\end{aligned}
\end{equation*}
Hence $\lambda\big( W[ v ] \big)(x, t) \in \Gamma_k$ for any $(x, t) \in M \times [0, 1]$.

Let
\[ w(x, t) = v(x, t) + a t (t - 1).  \]
We may choose $a$ sufficiently large such that
\[ w_{tt} > 0 \quad \text{on} \quad M \times [0, 1] \]
and
\[ w_{tt} \sigma_k \big( W [v] \big) - \sigma_k^{ij} \big( W[ v ] \big) v_{ti} v_{tj} > 0 \quad \text{on} \quad M \times [0, 1]. \]
It follows that
\[ \lambda \big( E_w \big) \in \Gamma_k \quad \text{on} \quad M \times [0, 1].  \]

Now, we construct the continuity process for $\tau \in [0, 1]$,
\begin{equation} \label{eq5-6}
\left\{ \begin{aligned}
& u_{tt} \sigma_k \big( W[u] \big) - \sigma_k^{ij} \big( W[ u ] \big) u_{ti} u_{tj} \\
= & (1 - \tau) \Big( w_{tt} \sigma_k \big( W[w] \big) - \sigma_k^{ij} \big( W[ w ] \big) w_{ti} w_{tj}  \Big) + \tau \psi(x, t), \\
& u(\cdot, 0) = u_0,  \quad  u(\cdot, 1) = u_1.
\end{aligned} \right.
\end{equation}
It is obvious to see when $\tau = 0$, $u^0 = w$ is an admissible solution to \eqref{eq5-6}. Since the linearized operator associated to \eqref{eq5-6} is
invertible, we can apply implicit function theorem to prove the openness of the set of $\tau \in [0, 1]$ at which \eqref{eq5-6} has an admissible solution $u^{\tau}(x, t)$ on $M \times [0, 1]$. The closedness can be proved by the a priori estimates which are established in previous sections. Then the existence of solution $u^{\tau}(x, t)$ to \eqref{eq5-6} for any $\tau \in [0, 1]$ follows from classical continuity method. The uniqueness follows from maximum principle.
\hfill \qedsymbol

\vspace{2mm}

Before we give the proof of Theorem \ref{Thm-weak solution} and Theorem \ref{Theorem2}, we provide the definition of viscosity solution to
\eqref{eq1}
according to Definition 1.1 in \cite{LiYanyan2009}.
\begin{defn} \label{Def1}
Let $\Omega$ be an open subset of $M \times [0, 1]$. A continuous function $u$ in $\Omega$ is a viscosity supersolution of \eqref{eq1} if for any $(x_0, t_0) \in \Omega$ and $\varphi \in C^2(\Omega)$, if $u - \varphi$ has a local minimum at $(x_0, t_0)$, then $R_{\varphi} (x_0, t_0) \notin \overline{\mathcal{S}}$ or \begin{equation*}
\Big( \varphi_{tt} \sigma_k \big( W[ \varphi ] \big) -  \sigma_{k}^{ij} \big( W[ \varphi ] \big)  {\varphi}_{ti} {\varphi}_{tj} \Big) (x_0, t_0) \leq \psi(x_0, t_0) \quad \text{on} \quad M \times [0, 1],
\end{equation*}
where $R_{\varphi}$ is given in \eqref{eq4-11} and $\mathcal{S}$ is given in Proposition \ref{prop5}. A continuous function $u$ in $\Omega$ is a viscosity subsolution of \eqref{eq1} if for any $(x_0, t_0) \in \Omega$ and $\varphi \in C^2(\Omega)$, if $u - \varphi$ has a local maximum at $(x_0, t_0)$, then
\begin{equation*}
\Big( \varphi_{tt} \sigma_k \big( W[ \varphi ] \big) -  \sigma_{k}^{ij} \big( W[ \varphi ] \big)  {\varphi}_{ti} {\varphi}_{tj} \Big) (x_0, t_0) \geq \psi(x_0, t_0) \quad \text{on} \quad M \times [0, 1].
\end{equation*}
We say that $u$ is a viscosity solution of \eqref{eq1} if it is both a viscosity supersolution and a viscosity subsolution.
\end{defn}

\vspace{2mm}

{\bf Proof of Theorem \ref{Thm-weak solution}.}
We construct the following Dirichlet problem
\begin{equation} \label{eq5-8}
\left\{ \begin{aligned}
& u_{tt} \sigma_k \big( W^{\epsilon} [u] \big) - \sigma_k^{ij} \big( W^{\epsilon}[ u ] \big) u_{ti} u_{tj}
= \psi + \epsilon \quad \text{on  }  M \times [0, 1], \\
& u(\cdot, 0) = u_0,  \quad  u(\cdot, 1) = u_1,
\end{aligned} \right.
\end{equation}
where
\[ W^{\epsilon} [ u ] = W [ u ] + g^{- 1} \epsilon \Delta u g . \]
For any $\epsilon \in (0, 1]$, by Theorem \ref{Theorem1} there exists a unique smooth admissible solution $u^{\epsilon} (x, t)$ to \eqref{eq5-8}.
By \eqref{eq1-11}, we know that the solutions $\{ u^{\epsilon} \}$ have uniform $C^1$ bound which is independent of $\epsilon$.  Thus, as $\epsilon \rightarrow 0$, $u^{\epsilon}$ has a convergent subsequence which converges in $C^{0, \alpha}$ to a $C^{0, 1}$ solution $u$ of \eqref{eq1}--\eqref{eq1-4} for any $\alpha \in (0, 1)$. In the sense of Definition \ref{Def1}, $u$ is a viscosity solution.
\hfill \qedsymbol

\vspace{2mm}

{\bf Proof of Theorem \ref{Theorem2}.}
For any $\epsilon \in (0, 1]$, by Theorem \ref{Theorem1}, there exists a unique smooth admissible solution $u^{\epsilon} (x, t)$ to the Dirichlet problem
\begin{equation} \label{eq5-7}
\left\{ \begin{aligned}
& u_{tt} \sigma_k \big( W[u] \big) - \sigma_k^{ij} \big( W[ u ] \big) u_{ti} u_{tj}
= \epsilon \quad \text{on  }  M \times [0, 1], \\
& u(\cdot, 0) = u_0,  \quad  u(\cdot, 1) = u_1.
\end{aligned} \right.
\end{equation}
By the estimates established in previous sections, we know that the solutions $\{ u^{\epsilon} \}$ have uniform $C^2$ bound which is independent of $\epsilon$. Also, by the comparison principle, we know that $u^{\epsilon_1} \leq u^{\epsilon_2}$ if $\epsilon_1 \geq \epsilon_2$. Thus, as $\epsilon \rightarrow 0$, $u^{\epsilon}$ converges in $C^{1, \alpha}$ to a $C^{1, 1}$ solution $u$ of \eqref{eq1-1} for any $\alpha \in (0, 1)$. In the sense of Definition \ref{Def1}, $u$ is a viscosity solution of \eqref{eq1-1}.
\hfill \qedsymbol

\medskip

\vspace{4mm}

\section{Uniqueness}

\vspace{4mm}

In this section, we adopt the idea in Guan and Zhang \cite{Guan-Zhang2012} to establish the uniqueness result to the degenerate equation \eqref{eq1-1}.

\begin{lemma} \label{Lemma-approximation}
Suppose that (i) $\gamma > 0$,  $r \geq 0$ and $2 s k \leq r n$ or (ii) $r > 0$ and $2 s k \leq r n$.
Let $u$ be a $C^{1, 1}$ admissible function defined on $M \times [0, 1]$ which satisfies
\begin{equation*}
F_k (u) : = u_{tt} \sigma_k \big( W[u] \big) - \sigma_{k}^{ij} \big( W[u] \big)  u_{ti} u_{tj} = 0.
\end{equation*}
For any $\delta > 0$, there exists an admissible function $u_{\delta} \in C^{\infty} \big( M \times [0, 1] \big)$ such that
\[ \begin{aligned}
0 <  F_k (u_{\delta}) \leq \delta
\end{aligned} \]
and
\[ \Vert u - u_{\delta} \Vert_{C^0 (M \times [0, 1])} \leq \delta.  \]
\end{lemma}
\begin{proof}
We consider $v = (1 - \theta) u$, where $\theta \in (0, 1)$ is a constant to be chosen later.
By direct calculation,
\[ W [v] = (1 - \theta) W [u] + \theta \bigg( A + (1 - \theta) \Big( \frac{r}{2} |\nabla u|^2 g - s d u \otimes d u \Big) \bigg). \]
We note that if
\[ \Big( \frac{r}{2}, \ldots, \frac{r}{2}, \frac{r}{2} - s \Big) \in \overline{\Gamma}_k, \]
then
\[  \lambda \Big( \frac{r}{2} |\nabla u|^2 g - s d u \otimes d u \Big) \in \overline{\Gamma}_k . \]
It follows that
\[ \lambda \big( W [ v ] \big) \in \Gamma_k . \]
Meanwhile,
\[
\left(
\begin{array}{cc}
 u_{tt} & d u_t \\
 d u_t & W [ u ]
\end{array}
\right) \in \overline{S} \]
and
\[
\left(
\begin{array}{cc}
 0 & 0 \\
 0 &  A + (1 - \theta) \Big( \frac{r}{2} |\nabla u|^2 g - s d u \otimes d u \Big)
\end{array}
\right) \in \overline{S}  \]
imply that
\begin{footnotesize}
\[ \left(
\begin{array}{cc}
 v_{tt} & d v_t \\
 d v_t & W [ v ]
\end{array}
\right) = (1 - \theta) \left(
\begin{array}{cc}
 u_{tt} & d u_t \\
 d u_t & W [ u ]
\end{array}
\right) + \theta \left(
\begin{array}{cc}
 0 & 0 \\
 0 &  A + (1 - \theta) \Big( \frac{r}{2} |\nabla u|^2 g - s d u \otimes d u \Big)
\end{array}
\right)  \in \overline{S}. \]
\end{footnotesize}
Let
\[ w = v + \theta t (t - 1). \]
We can verify that
\[ \left(
\begin{array}{cc}
 w_{tt} & d w_t \\
 d w_t & W [ w ]
\end{array}
\right) \in S.  \]

Now for any $\delta > 0$, by continuity we can choose $\theta  \in (0, 1)$ sufficiently small  such that
\begin{footnotesize}
\[ \begin{aligned}
0 < & w_{tt} \sigma_k \big( W [ w ] \big) - \sigma_k^{ij} \big( W [ w ] \big) w_{ti} w_{tj}  \\
= &  \Big( (1 - \theta) u_{tt} + 2 \theta \Big) \sigma_k \Bigg(  (1 - \theta) W [u] + \theta \bigg( A + (1 - \theta) \Big( \frac{r}{2} |\nabla u|^2 g - s d u \otimes d u \Big) \bigg)  \Bigg) \\
& - \sigma_k^{ij} \Bigg( (1 - \theta) W [u] + \theta \bigg( A + (1 - \theta) \Big( \frac{r}{2} |\nabla u|^2 g - s d u \otimes d u \Big) \bigg) \Bigg) (1 - \theta)^2 u_{ti} u_{tj} \leq \frac{\delta}{2}
\end{aligned} \]
\end{footnotesize}
and
\[ |u - w| =  \theta \big| u - t (t - 1) \big| \leq \frac{\delta}{2}.  \]
We can then approximate $w$ by a smooth function $u_{\delta}$ such that
\[ \begin{aligned}
0 <  (u_{\delta})_{tt} \sigma_k \big( W [ u_{\delta} ] \big) - \sigma_k^{ij} \big( W [ u_{\delta} ] \big) (u_{\delta})_{ti} (u_{\delta})_{tj} \leq \delta
\end{aligned} \]
and
\[ |u - u_{\delta}| \leq \delta.  \]
\end{proof}

\begin{thm} \label{Theorem-uniqueness}
Under the assumption of Lemma \ref{Lemma-approximation}, $C^{1, 1}$ admissible solution to degenerate equation \eqref{eq1-1} is unique.
\end{thm}
\begin{proof}
Let $u_1$ and $u_2$ be two such solutions to \eqref{eq1-1}. For any $\delta > 0$, there exists an admissible function $v_1 \in C^{\infty} \big( M \times [0, 1] \big)$ such that
\[ \begin{aligned}
0 <  F_k (v_1) \leq \frac{\delta}{2}
\end{aligned} \]
and
\[ \Vert u_1 - v_1 \Vert_{C^0 (M \times [0, 1])} \leq \frac{\delta}{2}.  \]

For $\min\limits_{M \times [0, 1]} F_k (v_1) > 0$,
there exists an admissible function $v_2 \in C^{\infty} \big( M \times [0, 1] \big)$ such that
\[ \begin{aligned}
0 <  F_k (v_2) \leq \min\limits_{M \times [0, 1]} F_k (v_1) \leq F_k (v_1)
\end{aligned} \]
and
\[ \Vert u_2 - v_2 \Vert_{C^0 (M \times [0, 1])} \leq \min\limits_{M \times [0, 1]} F_k (v_1) \leq \frac{\delta}{2}.  \]

By the maximum principle, we know that
\[ \begin{aligned}
& \max\limits_{M \times [0, 1]} ( v_1 - v_2 ) \leq  \max\limits_{M \times \{ 0, 1 \}} ( v_1 - v_2 )  \\
\leq &   \max\limits_{M \times \{ 0, 1 \}} ( v_1 - u_1 ) + \max\limits_{M \times \{ 0, 1 \}} ( u_1 - u_2 ) +  \max\limits_{M \times \{ 0, 1 \}} ( u_2 - v_2 ) \leq \delta.
\end{aligned} \]
Hence we have
\[  \max\limits_{M \times [0, 1]} ( u_1 - u_2 ) \leq  \max\limits_{M \times [0, 1]} ( u_1 - v_1 ) +  \max\limits_{M \times [0, 1]} ( v_1 - v_2 ) +  \max\limits_{M \times [0, 1]} ( v_2 - u_2 ) \leq 2 \delta.  \]
Similarly, we can prove that
\[  \max\limits_{M \times [0, 1]} ( u_2 - u_1 ) \leq 2 \delta.  \]
Since $\delta > 0$ is arbitrary, letting $\delta \rightarrow 0$ we arrive at
\[ u_1 \equiv u_2 \quad \text{ on } M \times [0, 1]. \]
\end{proof}

\vspace{4mm}

\medskip
\noindent
{\bf Acknowledgements} \quad
The second author would like to express deep thanks to Wei Sun for bringing the idea of Theorem \ref{Thm4}. The second author is supported by National Natural Science Foundation of China (No. 12001138). The authors state that there is no conflict of interest. Data sharing not applicable to this article as no datasets were generated or analysed during the current study.

\vspace{4mm}

\end{document}